\documentclass[11pt, reqno]{amsart}
\usepackage{wrapfig, lipsum,booktabs}
\usepackage{graphicx}
\usepackage{amssymb}
\usepackage{epstopdf}
\usepackage{verbatim}
\usepackage{bm}
\usepackage{multicol}
\usepackage{multirow}
\usepackage{subfigure}
\usepackage{float}
\usepackage{color}
\usepackage{soul}
\usepackage{tikz}
\usepackage{tikz-3dplot}
\usepackage{pgfplots}
\newtheorem{theorem}{Theorem}[section]

\usepackage{multirow}
\usepackage{amsmath,amssymb,eucal}
\usepackage{graphicx,subfigure,epsfig}
\usepackage{url}
\usepackage[top=1in, bottom=1.in, left=1in, right=1in]{geometry}

%
%
%
\newcommand{\bld}[1]{\hbox{\boldmath$#1$}}
\newcommand{\Th}{\mathcal{T}_h}

\newcommand{\Eh}{\mathcal{E}_h}

\setulcolor{red}
\newcommand{\jmp}[1]{[\![#1 ]\!]}


%

\begin{document}
\title[HybridMixed]{A hybrid-mixed finite element method
  for single-phase Darcy flow in fractured porous media}
\author{Guosheng Fu}
\address{Department of Applied and Computational Mathematics and
Statistics, University of Notre Dame, USA.}
\email{gfu@nd.edu}
\author{Yang Yang}
\address{Department of Mathematical Sciences, Michigan Technological University, USA.}
\email{yyang7@mtu.edu}
 \thanks{
 G. Fu was partially supported by the NSF grant DMS-2012031.
 Y. Yang was partially supported by the NSF grant DMS-1818467.
 }

 \keywords{Hybrid-mixed finite element method; fractured porous media,
 hybrid-dimensional model}
\subjclass{65N30, 65N12, 76S05, 76D07}
\begin{abstract}
  We present a hybrid-mixed finite element method for
  a novel hybrid-dimensional model of single-phase Darcy flow in a fractured porous media.
In this model, the fracture is treated as an
$(d-1)$-dimensional interface within the $d$-dimensional fractured porous
domain, for $d=2, 3$.
Two classes of fracture are distinguished based on the permeability magnitude
ratio between the fracture and its surrounding medium:
when the permeability in the fracture is (significantly) larger than in
its surrounding  medium, it is considered as a {\it conductive} fracture;
when the permeability in the fracture is (significantly)
smaller than in its surrounding medium, it is considered as a {\it blocking}
fracture.
The conductive fractures are treated using the classical hybrid-dimensional
approach of the interface model where pressure is assumed to be continuous
across the fracture interfaces, while the blocking fractures are treated using
the recent Dirac-$\delta$ function approach where normal component
of Darcy velocity is assumed to be continuous across the interface.
Due to the use of Dirac-$\delta$ function approach for the blocking fractures, our numerical scheme allows for nonconforming meshes with respect to the blocking fractures. This is the major novelty of our model and numerical discretization. Moreover, our numerical scheme produces locally conservative velocity approximations and leads to a symmetric positive definite linear system involving pressure degrees of freedom on the mesh skeleton only.
The performance of the proposed method is demonstrated by various benchmark
test cases in both two- and three-dimensions.
Numerical results indicate that the proposed scheme is highly competitive with existing methods in the literature.
\end{abstract}
\maketitle

\section{Introduction}
\label{sec:intro}

Numerical simulations of single- and multi-phase flows in porous media have many applications in contaminant transportation, oil recovery and underground radioactive waste deposit. Due to the highly conductive and blocking fractures in the porous media underground, it is still challenging to construct accurate numerical approximations \cite{R2,R3, R4}.

There are several commonly used mathematical models for simulating flows in porous media with conductive fractures, such as the dual porosity model \cite{DualPoro1,DualPoro2,geiger}, single porosity model \cite{SinglePoro1}, traditional discrete fracture model (DFM) \cite{FEMDFM1,FEMDFM2,SuperposeStiffnessMat,FEMDFM,DFMpaper1,FEMDFM3,FEMDFM4}, embedded DFM (EDFM) \cite{firstEDFM,secondEDFM,EDFM3,pEDFM,EDFM4,CrossShaped,EDFM5}, the interface models \cite{Alboin99,Interfaces2,Interfaces3,benchmark2} and extended finite element DFM (XDFM) based on the interface models \cite{XFEMDFM1,XFEMDFM2,ThesisXFEM,XFEMDFM3,XFEMDFM4}, finite element method based on Lagrange multipliers \cite{LMFEM2D,LMFEM2D2, LMFEM3D}, etc. Among the above methods, the traditional DFM and the interface models have been intensively studied in the past decades.

The DFM is based on the principle of superposition. It uses a hybrid dimensional representation of the Darcy's law, and treats the fractures as lower dimensional entries, with the thickness of the fracture as the dimensional homogeneity factor. The first DFM was introduced by Noorishad and Mehran \cite{FEMDFM1} in 1982 for single phase flows. Later, Baca et al. \cite{FEMDFM2} considered the heat and solute transport in fractured media. Subsequently, several significant numerical methods were applied to the DFM, such as the finite element methods \cite{SuperposeStiffnessMat,FEMDFM,DFMpaper1,FEMDFM3,FEMDFM4}, vertex-centered finite volume methods \cite{BoxDFM1,BoxDFM2,BoxDFM4,BoxDFM6}, cell-centered finite volume methods \cite{IntersectingFractures,CCDFM2,CCDFM3,CCDFM4,CCDFM5}, mixed finite element methods \cite{MFEMDGDFM1,MFEMDGDFM2,MFEMDGDFM3,MFEMDGDFM4,MFEMDGDFM6,MFEMDGDFM7,MFEMDGDFM8,MFEMDGDFM9}, discontinuous Galerkin methods
\cite{DGinterfaces2}.
All the above works are limited on conforming meshes, i.e. the fractures are aligned with the interfaces of the background matrix cells. Therefore, it may suffer from low quality cells. Recently, Xu and Yang introduced the line Dirac-$\delta$ functions \cite{YangJCP20} to represent the conductive fractures and reinterpreted the DFM (RDFM) on nonconforming meshes. The basic idea is to superpose the conductivity of the fracture to that of the matrix. The main contribution in \cite{YangJCP20} is to explicitly represent the DFM introduced in \cite{DFMpaper1} as a scalar partial differential equation. Therefore, with suitable numerical discretizations, such as the discontinuous Galerkin method, the RDFM can be applied to arbitrary meshes. To demonstrate that the RDFM is exactly the traditional DFM if the mesh is conforming, in \cite{YangJCP20} only finite element methods were considered. Therefore, local mass conservation was missing. Later, the enriched Galerkin and interior penalty discontinuous Galerkin methods were applied to RDFM in \cite{YangAWR21} and the contaminant transportation was also simulated.

Different from the traditional DFM, the interface model \cite{Alboin99,Interfaces2,Interfaces3,benchmark2} explicitly represent the fractures as interfaces of the porous media. Then the governing equation of the flow in the lower dimensional fracture was constructed. In the interface model, the matrix and fractures are considered as two systems, and the communication between them was given as the jump the normal velocity along the fractures. Therefore, different from RDFM, the interface model, though hanging nodes are allowable, cannot be applied to structured meshes and the fracture must be aligned with the interfaces of the meshes for the matrix. To fixed this limitation, the XDFM was proposed \cite{XFEMDFM1,XFEMDFM2,ThesisXFEM,XFEMDFM3,XFEMDFM4}. However, these methods may increase the degrees of freedom (DOFs) significantly, and can hardly be applied to fracture networks with high geometrical complexity \cite{FLEMISCH2018239}. As an alternative, the CutFEM \cite{CutFEM} can be applied to non-conforming meshes. It couples the fluid flow in all lower dimensional manifolds. However, this method requires the fractures to cut the domain into completely disjoint subdomains, thus it is not applicable for media with complicated fractures.

Most of the above ideas work for problems with conductive fractures. However, if the media contains blocking fractures, most methods may not be suitable. To fix this gap, the projection-based EDFM (pEDFM) was introduced in \cite{pEDFM,Jiang2017}. The effective flow area between adjacent matrix grids is computed as the difference between the original interface area and the projected area of the fracture segment. It will be zero if the fracture fully penetrates through the matrix cell. Olorode et al. \cite{Olorode2020} extended the pEDFM into three-dimensional compositional simulation of fractured reservoirs. However the pEDFM still cannot describe the complex multiphase flow behavior in the matrix blocks within barrier fractures. Another approach is to follow the interface model introduced in \cite{Interfaces5,Interfaces6,Boon2018,EG20}. However, as demonstrated above, the interface model can only handle hanging nodes, and the fractures must align with the interfaces of the background mesh. Recently, Xu and Yang extended the RDFM \cite{YangJCP20,YangAWR21} to problems with blocking fractures in \cite{Yang21}. The basic idea is to apply Ohm's law and superpose the resistance (the reciprocal of the permeability) of the blocking fracture to that of the matrix. Then a modified partial differential equation system was introduced and the local discontinuous Galerkin methods with suitable penalty were perfectly applied. 
If the problems contains only blocking fractures, the mixed finite element methods can easily be combined with RDFM.

In this paper, we combine the ideas in \cite{Alboin99} and \cite{Yang21}
to propose a novel model
for
single phase flows with both conductive and blocking fractures.
In particular, the conductive fractures are modeled by using the interface model \cite{Alboin99} where pressure continuity is enforced across the conductive fractures, and the blocking fractures are modelled as resistance terms involving Dirac-$\delta$ functions following the main idea in\cite{Yang21}.
The separate treatment of conductive and blocking fractures, and the seamless combination of the conductive fracture interface model and the blocking fracture Dirac-$\delta$ function approach is the major novelty of our proposed model.
We further discretize this new model using a hybrid-mixed finite element method, which produces locally conservative velocity approximations and leads to a symmetric positive definite linear system with globally coupled degrees of freedom (DOFs) only those of pressure on the mesh skeletons.
Moreover, due to the use of Dirac-$\delta$ function approach for blocking fractures, the method does not require any mesh conformity with respect to the blocking fractures, which is the major novelty of our proposed
scheme. We believe our approach is the simplest non-conforming mesh approach to blocking fractures that still yield locally conservative velocity approximations.
We note that mesh conformity with respect to the conductive fractures is still required for our method, which is typical for interface models.
We numerically demonstrate that our hybrid-mixed finite element scheme is highly competitive both in terms of computational efficiency and accuracy.
We finally emphasis that the proposed hybrid-mixed formulation is different from the mixed method in \cite{Boon2018} due to the use of different model for the interface conditions.
We believe that our model is significantly simpler for complex fracture networks since we only use one matrix domain and one (codimension 1) conductive fracture domain throughout, while the mixed method formulation \cite{Boon2018} needs to split the matrix and fracture domains into multiple disjoint sub-domains and require the modeling of codimension 1-3 fracture flows, which might be very tedious to perform for complex fracture networks.

The rest of the paper is organized as follows.
In Section \ref{sec:model}, we present the hybrid-dimensional model under
consideration.
We then formulate in Section \ref{sec:method}
the hybrid-mixed finite element discretization of the model
proposed in Section \ref{sec:model}.
Numerical results for various benchmark test cases are presented in Section \ref{sec:num}.
We conclude in Section \ref{sec:conclude}.

\section{The hybrid-dimensional model}
\label{sec:model}
%
\subsection{Notation}
We consider a bounded open domain
$\Omega_m\subset \mathbb{R}^d$, $d=2,3$, which contains
several $(d-1)$-dimensional
conductive or blocking fractures.
For simplicity, the fractures are assumed to be hyperplanes with smooth boundaries.
We denote $\Omega_{c}$ as the $(d-1)$-dimensional open set
containing all the conductive fractures, and
$\Omega_{b}$ as the set containing all the  blocking fractures.
Assume the $(d-1)$-dimensional domain boundary $\partial \Omega_m = \Gamma_D\cup
\Gamma_N$, with
$\Gamma_D\cap \Gamma_N=\emptyset$.
Furthermore, we denote the following sets of $(d-2)$-dimensional
boundaries (intersections) associated with the set of conductive fractures  $\Omega_{c}$:
\begin{itemize}
  \item $\Gamma_{cc}$ is the set containing the intersections among
    conductive fractures.
  \item $\Gamma_{cb}$ is the set containing the  intersections between
2

    conductive and blocking fractures.
  \item $\Gamma_{cm}$ is the set containing the intersections between
    conductive fractures and domain boundary $\partial \Omega_m$,
    which is further split to
    $\Gamma_{cm} = \Gamma_{cm}^N\cup \Gamma_{cm}^D$ with
    $\Gamma_{cm}^N\in \Gamma_N$ and $\Gamma_{cm}^D\in \Gamma_D$.
  \item $\Gamma_{ci}$ is the boundary of  $\Omega_c$ that does not intersect with
    the  domain boundary $\partial \Omega_m$.
\end{itemize}
We set $\Gamma_c= \Gamma_{cc}\cup\Gamma_{cb}\cup\Gamma_{cm}\cup\Gamma_{ci}$ as the
collections of all intersections of $\Omega_c$.
An illustration of a typical hybrid-dimensional domain in two-dimensions is
given in Figure \ref{fig:geo}.
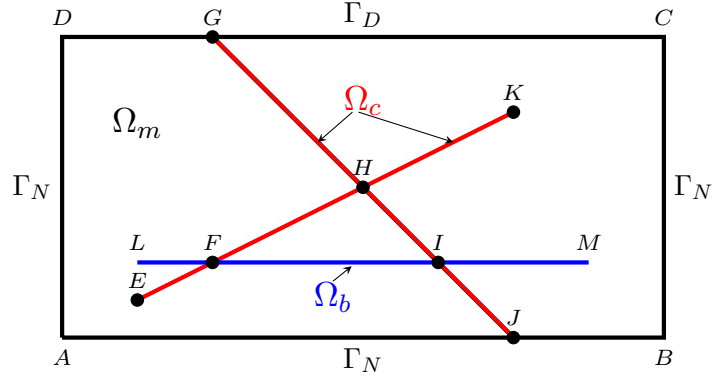
\begin{figure}[ht]
\centering
  \begin{tikzpicture}
    \draw[ultra thick,draw=black]
    (0, 0)
    to (6, 0)
    to (2, 4)
    to (0, 4)
    to (0, 0)
    ;
  \draw[ultra thick,draw=black]
    (6, 0)
    to (8, 0)
    to (8, 4)
    to (2, 4)
    to (6, 0)
    ;
  \draw[ultra thick, draw=red]
    (6,0) to (2,4);
  \draw[ultra thick, draw=red]
    (1,0.5) to (6,3);
  \draw[ultra thick, draw=blue]
    (1,1) to (7,1);
  \draw[ fill=black] (2, 1) circle (.5ex);
  \draw[ fill=black] (5, 1) circle (.5ex);
  \draw[ fill=black] (2, 4) circle (.5ex);
  \draw[ fill=black] (6, 0) circle (.5ex);
  \draw[ fill=black] (1, 0.5) circle (.5ex);
  \draw[ fill=black] (6, 3) circle (.5ex);
  \draw[ fill=black] (4, 2) circle (.5ex);
  \node at (1,0.5)[above,scale=1.2] {\tiny $E$};
  \node at (2,1)[above,scale=1.2] {\tiny $F$};
  \node at (2,4)[above,scale=1.2] {\tiny $G$};
  \node at (4,2)[above,scale=1.2] {\tiny $H$};
  \node at (5,1)[above,scale=1.2] {\tiny $I$};
  \node at (6,0)[above,scale=1.2] {\tiny $J$};
  \node at (6,3)[above,scale=1.2] {\tiny $K$};
  \node at (1,1)[above,scale=1.2] {\tiny $L$};
  \node at (7,1)[above,scale=1.2] {\tiny $M$};
  \node at (0,0)[below,scale=1.2] {\tiny $A$};
  \node at (8,0)[below,scale=1.2] {\tiny $B$};
  \node at (8,4)[above,scale=1.2] {\tiny $C$};
  \node at (0,4)[above,scale=1.2] {\tiny $D$};
  \node at (1,2.5)[above,scale=1.2] {$\Omega_m$};
  \node at (4.0,2.8)[above,scale=1.2] {{\color{red}$\Omega_c$}};
  \node at (3.6,0.9)[below,scale=1.2] {{\color{blue}$\Omega_b$}};
  \draw[-stealth] (3.86, 3.0) -- (3.4,2.6);
  \draw[-stealth] (3.94, 3.0) -- (5.2,2.6);
  \draw[-stealth] (3.6, 0.75) -- (3.85,0.95);
  \node at (4,4)[above,scale=1] {$\Gamma_D$};
  \node at (4,0)[below,scale=1] {$\Gamma_N$};
  \node at (0,2)[left,scale=1] {$\Gamma_N$};
  \node at (8,2)[right,scale=1] {$\Gamma_N$};
\end{tikzpicture}
\caption{
  A typical two dimensional fractured domain $\Omega_m$ (the rectangular domain).
 The domain boundary $\Gamma_D = \{CD\}$, $\Gamma_N=\{AB\}\cup \{BC\}\cup
 \{AD\}$, where $\{AB\}$ denotes the line segment connecting nodes $A$
  and $B$.
Here $\Omega_c=\{EK\}\cup \{JG\}$,
 $\Omega_b = \{LM\}$,
 $\Gamma_{cc}= H$, $\Gamma_{cb}=F\cup I$,
 $\Gamma_{cm}^D = G$, $\Gamma_{cm}^N=J$, and
 $\Gamma_{ci} = E\cup K$.
 }
\label{fig:geo}
\end{figure}

We denote $\bld n_\Gamma$ as a uniquely oriented unit normal vector
on a $(d-1)$-dimensional interface/boundary $\Gamma$, and denote
$\bld \eta_\Gamma$ as the {\it in-plane} unit (outer) normal
vector on the
$(d-2)$ dimensional boundary  $\partial \Gamma$ of $\Gamma$, see Figure
\ref{fig:normal}.
\begin{figure}[ht]
\centering
\begin{tabular}{cc}
\tdplotsetmaincoords{70}{110}
\begin{tikzpicture}[tdplot_main_coords,font=\sffamily]
\draw[-latex, thick] (0,0,0) -- (1,-1,0) node[left] {$\bld n_\Gamma$};
\draw[thick,red,opacity=0.4] (-3,-3,0) -- (3,3,0);
 \draw[fill=black] (-3, -3,0) circle (.5ex);
 \draw[fill=black] (3, 3,0) circle (.5ex);
\draw[-latex, thick] (-3,-3,0) -- (-4,-4, 0) node[left] {$\bld \eta_\Gamma$};
\draw[-latex, thick] (3,3,0) -- (4,4, 0) node[left] {$\bld \eta_\Gamma$};
\node at (-2,-1.5, 0) {$\Gamma$};
\node at (-3.8,-3.2, 0) {$\partial\Gamma$};
\end{tikzpicture}
&
\tdplotsetmaincoords{70}{110}
\begin{tikzpicture}[tdplot_main_coords,font=\sffamily]
\draw[-latex, thick] (0,0,0) -- (0,0,1.5) node[left] {$\bld n_\Gamma$};
\draw[fill=red,opacity=0.1] (-3,-3,0) -- (-3,3,0) -- (3,3,0) -- (3,-3,0) -- cycle;
\draw[-latex, thick] (0,-3,0) -- (0,-4, 0) node[left] {$\bld \eta_\Gamma$};
\draw[-latex, thick] (0,3,0) -- (0,4, 0) node[right] {$\bld \eta_\Gamma$};
\draw[-latex, thick] (-3,0,0) -- (-4,0, 0) node[left] {$\bld \eta_\Gamma$};
\draw[-latex, thick] (3, 0,0) -- (4,0, 0) node[left] {$\bld \eta_\Gamma$};
\node at (-0.5,-1.5, 0) {$\Gamma$};
\node at (-3.2,-3.2, 0) {$\partial\Gamma$};
\end{tikzpicture}
\\
  (a) 2D case & (b) 3D case
\end{tabular}
\caption{
Normal direction $\bld n_\Gamma$ and in-plane normal direction $\bld \eta_\Gamma$ for a
$(d-1)$-dimensional interface/boundary $\Gamma$.
 Left: $d=2$, $\Gamma$ is a line segment. Right: $d=3$,
 $\Gamma$ is a planar quadrangle.
 }
\label{fig:normal}
\end{figure}
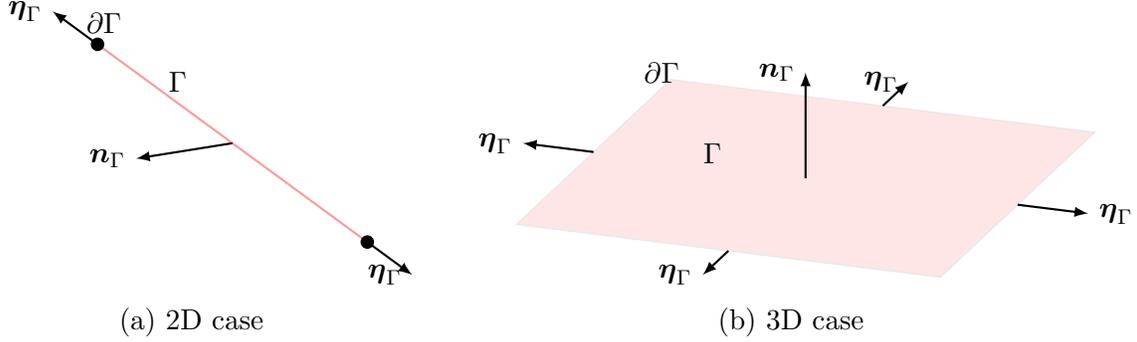

Let $\epsilon$ be the thickness of the fractures, which is assumed to be
a small positive constant for simplicity.
Let $\mathbb{K}_m$ be the permeability tensor of the domain
excluding the fractures $\Omega_m\backslash \{\Omega_c\cup \Omega_b\}$,
$K_b\ll \mathbb{K}_m$ be the (scalar) permeability in the normal direction of blocking
fractures $\Omega_b$, and $\mathbb{K}_c\gg \mathbb{K}_m$ be the permeability
tensor in the tangential direction of the
conductive fractures $\Omega_c$.

\subsection{The hybrid-dimensional flow model}
The following hybrid-dimensional model is a combination of the conductive
fracture treatment in \cite{Alboin99}  and blocking fracture treatment in
\cite{Yang21}.
In the bulk domain $\Omega_m\backslash\Omega_c$ excluding
conductive fractures, we use the following barrier model:
\begin{subequations}
  \label{model}
  \begin{alignat}{2}
  \label{model1}
    (\mathbb{K}_m^{-1}+\frac{\epsilon}{K_b}\delta_{\Omega_b} \bld n_{\Omega_b}\otimes \bld
    n_{\Omega_b})\bld u = &\; -\nabla p,&&\quad \text{ in }\Omega_m\backslash\Omega_c,  \\
  \label{model2}
    \nabla \cdot \bld u = &\; f, &&\quad \text{ in }\Omega_m\backslash\Omega_c,
  \end{alignat}
where $\bld u$ is the Darcy velocity, $p$ is the pressure, $f$ is the volume source term, $\delta_{\Omega_b}$ is the Dirac-$\delta$ function that takes values $\infty$
on the blocking fractures $\Omega_b$ and zero elsewhere, and
$\bld n_{\Omega_b}$ is the unit normal vector on $\Omega_b$.
Within the conductive fractures excluding
intersections $\Omega_c\backslash\Gamma_c$, we use the following
$(d-1)$-dimensional Darcy's law:
  \begin{alignat}{2}
  \label{model3}
    (\epsilon\mathbb{K}_c)^{-1}\bld u_c = &\; -\nabla_{\Gamma}\, p_c,&&\quad \text{ in }
    \Omega_c\backslash\Gamma_c,  \\
  \label{model4}
    \nabla_{\Gamma} \cdot \bld u_c = &\; \jmp{\bld u}, &&\quad \text{ in }\Omega_c\backslash\Gamma_c,
  \end{alignat}
where $\bld u_c$ is the (tangential) Darcy velocity in the conductive
fractures, $p_c$ is the associated pressure,
and the velocity jump $\jmp{\bld u} = (\bld u^+-\bld u^-)\cdot\bld n_\Gamma$
represents the mass exchange between the conductive fractures and the
surrounding media, where $\bld u^{\pm}(\bld x) = \lim_{\tau\rightarrow 0^{\pm}}
\bld u(\bld x-\tau\bld n_\Gamma)$ for all $\bld x \in \Omega_c$ is the bulk
Darcy velocity evaluated on one side of the conductive fractures.
Moreover, $\nabla_{\Gamma}$ and $\nabla_{\Gamma}\cdot$ are the usual surface
gradient and surface divergence operators.
The above hybrid-dimensional system is closed with the following set of
boundary/interface conditions:
\begin{alignat}{2}
  \label{bc0}
  p = &\; p_D, && \quad \text{ on }\Gamma_D,\\
  \label{bc1}
  \bld u\cdot\bld n = &\; q_N, && \quad \text{ on }\Gamma_N,\\
  \label{bc2}
  p = &\; p_c, && \quad \text{ on }\Omega_c,\\
  \label{bc3}
  \jmp{\bld u_c} = &\; 0, && \quad \text{ on }\Gamma_{cc},\\
  \label{bc4}
  p_c = &\; p_D, && \quad \text{ on }\Gamma_{cm}^D,\\
  \label{bc5}
  \bld u_c\cdot \bld \eta_\Gamma = &\; 0, && \quad \text{ on
  }\Gamma_{cb}\cup\Gamma_{cm}^N\cup\Gamma_{ci},
\end{alignat}
where \eqref{bc2} ensures continuity of bulk pressure across conductive
fractures,
the  no-flow boundary condition in  \eqref{bc5}
is imposed on the intersections $\Gamma_{cb}$, $\Gamma_{cm}^N$ and
$\Gamma_{ci}$,
and the jump term in \eqref{bc3} is
\[
 \jmp{\bld u_c}\Big|_{e}:= \sum_{\Gamma\subset \Omega_c\backslash\Gamma_c, \; e\in
 \overline{\Gamma}}
 \bld u_c|_{\Gamma}\cdot \bld \eta_\Gamma, \quad \forall e\in \Gamma_{cc},
\]
which represents mass conservation along intersections $\Gamma_{cc}$.
Note in particular that each conductive fracture containing the intersection $e$
appears exactly twice in the above summation, and
the in-plane normal velocity on  the fracture is allowed to be discontinuous
along the intersection $e$. For example, the jump $\jmp{\bld u_c}|_H$ at node $H$ in the configuration
in Figure \ref{fig:geo} is
\[
 \jmp{\bld u_c}|_{H}:=
 \sum_{\Gamma\in \left\{\{EH\}, \{HK\}, \{GH\}, \{HJ\}\right\}}
 \bld u_c|_{\Gamma}\cdot\bld \eta_{\Gamma}.
\]
\end{subequations}
We note that in the above model \eqref{model}, the flow in the tangential
direction in the blocking fractures is completely ignored as the permeability therein is much smaller than that of the surroundings,
on the other hand, the flow in the normal direction is ignored
on conductive fractures by the pressure
continuity condition \eqref{bc2} since
the permeability is much larger than that of the surroundings
and the fluid has a tendency to flow along the tangential direction therein.

\subsection{The hybrid-dimensional transport model}
We now consider a scalar quantity $c$ that is transported through the porous medium subject to the velocity fields in the flow model
\eqref{model}.
Here $c$ usually represents the concentration of a generic passive tracer.
Similar to the flow treatment in the previous subsection, transport inside the blocking fractures is ignored.
The concentrations $c$ in the matrix and $c_c$ in the
conductive fractures are governed by the following advection equations, see e.g. \cite{AJRS02,FS11,benchmark2},
\begin{subequations}
\label{transport}
\begin{alignat}{2}
\phi_m\frac{\partial c}{\partial t} +\nabla\cdot(\bld u c) &= c f,
&&\quad  \text{in } \Omega_m\backslash\Omega_c\times (0, T],  \\
\epsilon \phi_c\frac{\partial c_c}{\partial t} +\nabla_{\Gamma}\cdot(\bld u_c c_c) - \jmp{c \bld u}& = 0,
&& \quad  \text{in } \Omega_c\times (0, T],
\end{alignat}
with the following initial, interface, and boundary conditions
\begin{alignat}{2}
\label{cc}
c = &c_c \quad\text{on }\Omega_c\times (0, T],\\
c = &c_0 \quad\text{on }\Omega\times{0},\quad
&&
c_c = c_{c,0} \quad\text{on }\Omega_c\times{0},\\
c = &c_B \quad\text{on }\partial\Omega_{in}\times (0,T],
\quad &&c_c = c_{c,B} \quad\text{on }\Gamma_{in}\times (0,T],
\end{alignat}
\end{subequations}
where $\{\phi_m, c_0, c_B, \partial\Omega_{in}\}$ and
$\{\phi_c, c_{c,0}, c_{c,B}, \Gamma_{in}\}$ represent the \{porosity,
initial concentration, inflow concentration, and inflow boundary\} in the matrix and conductive fractures, respectively.
Observe that concentration continuity \eqref{cc} across the conductive fractures are enforced in the model \eqref{transport}.

\newcommand{\Vh}{\bld V_h}
\newcommand{\Wh}{W_h}
\newcommand{\Mh}{M_h}

\newcommand{\Vhc}{\bld V_h^c}
\newcommand{\Whc}{W_h^c}
\newcommand{\Mhc}{M_h^c}

\section{The hybrid-mixed finite element method}
\label{sec:method}
\subsection{Preliminaries}
Let $\Th:=\{K\}$ be a conforming simplicial
triangulation
 of the domain $\Omega_m$.
 Let $\Eh$ be the collections of $(d-1)$-dimensional facets
 (edges for $d=2$, faces for $d=3$)
 of $\Omega_m$.
 Assume the mesh is fully fitted with respect to the conductive fractures, i.e.,
 $\Th^c:=\Omega_c\cap \Eh$ is a $(d-1)$-dimensional simplicial triangulation of
 the domain $\Omega_c$.
Here the mesh $\Th$ is allowed to be unfitted with respect to the blocking
 fractures.
Moreover, we denote $\Eh^c$ as the collection of
$(d-2)$-dimensional facets of $\Th^c$ (vertices for $d=2$, edges for $d=3$).

We use the lowest-order hybrid-mixed finite element methods to discretize
the model \eqref{model}. The following finite element spaces
will be needed:
\begin{subequations}
  \label{space}
 \begin{align}
   \label{space-u}
   \Vh :=&\; \{\bld v\in [L^2(\Th)]^d:\;
   \bld v|_K\in RT_0(K),\quad \forall K\in\Th\},\\
   \Wh :=&\; \{w\in L^2(\Th):\;
   w|_K\in P_0(K),\quad \forall K\in\Th\},\\
   \Mh :=&\; \{\mu\in L^2(\Eh):\;
   \mu|_F\in P_0(F),\quad \forall F\in\Eh\},\\
   \Vhc :=&\; \{\bld v_c\in [L^2(\Th^c)]^d:\;
   \bld v|_F\in RT_0(F),\quad \forall F\in\Th^c\},\\
   \Mhc :=&\; \{\mu\in L^2(\Eh^c):\;
   \mu|_E\in P_0(E),\quad \forall E\in\Eh^c\},
 \end{align}
\end{subequations}
where $RT_0(S)$ is the Raviart-Thomas space of lowest order
on a simplex $S$, and $P_0(S)$ is the space of constants.

We denote the following inner products:
\begin{alignat*}{2}
  (\phi, \psi)_{\Th}: =&\;\sum_{K\in \Th}\int_{K}\phi\,\psi\,\mathrm{dx},&&\quad
  \quad
  \langle\phi, \psi\rangle_{\partial\Th}: =\;\sum_{K\in
\Th}\int_{\partial K}\phi\,\psi\,\mathrm{ds},\\
  \langle\phi, \psi\rangle_{\Th^c}: =&\;\sum_{F\in
  \Th^c}\int_{F}\phi\,\psi\,\mathrm{ds},&&\quad
  \quad
  [\phi, \psi]_{\partial\Th^c}: =\;\sum_{F\in
\Th^c}\int_{\partial F}\phi\,\psi\,\mathrm{dr},
\end{alignat*}
where $\mathrm{dx}$ is for $d$-dimensional integration, $\mathrm{ds}$ is for $(d-1)$-dimensional
integration, and $\mathrm{dr}$ is for $(d-2)$-dimensional integration.
When $d=2$, $\int_{\partial F}{\phi\,\psi}\mathrm{dr}$ is simply
the sum of point
evaluations at the two end points of a line segment $F$.

\subsection{The hybrid-mixed method for the flow model}
The hybrid-mixed method for the hybrid-dimensional model \eqref{model}
is given as follows:
Find $(\bld u_h, p_h, \widehat{p}_h, \bld u_h^c, \widehat{p}_h^c)
\in \Vh\times \Wh\times \Mh\times \Vhc\times \Mhc$
with $\widehat{p}_h|_{\Gamma_D} =\mathbb{P}_0(p_D)$
and
$\widehat{p}_h^c|_{\Gamma_{cm}^D} =\mathbb{P}_0(p_D)$,
where $\mathbb{P}_0$ denotes the projection onto piecewise constants,
such that
\begin{subequations}
  \label{fem}
  \begin{align}
    \label{fem1}
    (\mathbb{K}_m^{-1}\bld u_h, \bld v_h)_{\Th}
    +\int_{\Omega_b}\frac{\epsilon}{K_b}(\bld u_h\cdot\bld n)
    (\bld v_h\cdot\bld n)\mathrm{ds}
    -(p_h, \nabla\cdot\bld v_h)_{\Th}
    +\langle\widehat{p}_h, \bld v_h\cdot\bld n\rangle_{\partial\Th} =&\; 0,\\
    \label{fem2}
    (\nabla\cdot\bld u_h, q_h)_{\Th}-(f, q_h)_{\Th} =&\; 0,\\
    \label{fem3}
    -\langle\bld u_h\cdot\bld n, \widehat{q}_h\rangle_{\partial\Th}
    + \langle\nabla_{\Gamma}\cdot\bld u_h^c, \widehat{q}_h\rangle_{\Th^c}
    +\int_{\Gamma^N}q_N\,\widehat q_h\,\mathrm{ds}
    =&\;0,\\
    \label{fem4}
    \langle(\epsilon\mathbb{K}_c)^{-1}\bld u_h^c, \bld
    v_h^c\rangle_{\Th^c}
    - \langle\widehat{p}_h, \nabla_{\Gamma}\cdot\bld v_h^c\rangle_{\Th^c}
    + [\widehat{p}_h^c, \bld v_h^c\cdot\bld \eta]_{\partial\Th^c}
    + \int_{\Gamma_{cb}}\alpha(\epsilon\mathbb{K}_c)^{-1}(\bld u_h^c\cdot\bld \eta)
    (\bld v_h^c\cdot\bld \eta)\,\mathrm{dr}
    =&\; 0,\\
    \label{fem5}
    -[\bld u_h^c\cdot\bld \eta, \widehat{q}_h^c]_{\partial\Th^c}
    =&\; 0,
  \end{align}
\end{subequations}
for all
$(\bld v_h, q_h, \widehat{q}_h, \bld v_h^c, \widehat{q}_h^c)
\in \Vh\times \Wh\times \Mh\times \Vhc\times \Mhc$
with $\widehat{q}_h|_{\Gamma_D} =\widehat{q}_h^c|_{\Gamma_{cm}^D}=0$,
where $\alpha>0$ is a penalty parameter for the implementation of the
no-flow boundary condition \eqref{bc5} on $\Gamma_{cb}$.
In our numerical implementation, we take $\alpha = 10^6$.

We show that the scheme \eqref{fem} is formally consistent
with the hybrid-dimensional model \eqref{model}:
\begin{itemize}
  \item [(1)] Equation \eqref{fem1} is
    a discretization of the Darcy's law \eqref{model1} in the bulk
    using integration-by-parts
    and the following property of Dirac-$\delta$ function:
    \[
      \int_{\Omega_m}\delta_{\Omega_b}\phi\mathrm{dx}
      = \int_{\Omega_b}\phi\,\mathrm{ds}.
    \]
  \item [(2)] Equation \eqref{fem2} is
    the discretization of mass conservation \eqref{model2} in the bulk.
  \item [(3)] Equation \eqref{fem3} simultaneously enforces (i) the continuity of
    normal velocity $\bld u_h\cdot\bld n$ across interior element boundaries
    $\Eh\backslash(\Th^c\cup\Gamma_N)$, (ii) the boundary condition
    \eqref{bc1} on $\Gamma_N$, and (iii) mass conservation \eqref{model4} within the
    conductive fractures in  $\Th^c$.
  \item [(4)] Equation \eqref{fem4} is a discretization of the Darcy's law
    \eqref{model3} on the conductive fractures $\Th^c$, where the pressure
    continuity condition \eqref{bc3} is also strongly enforced as
    $\widehat{p}_h$ both represents the bulk pressure on the element boundary
    $\Eh$ and the pressure within the conductive fracture $\Th^c$.
    Moreover, the last term in \eqref{fem4} is a penalty formulation of the
    no-flow boundary condition \eqref{bc5} on $\Gamma_{cb}$. Note that
    $\Gamma_{cb}$ is allowed to be not aligned with the facets of $\Th^c$.
  \item [(5)] Equation \eqref{fem5} is a
    transmission condition that simultaneously enforces (i)
    continuity of in-plane normal velocity $\bld u_h^c\cdot\bld \eta$
    on interior facets $E_h^c\backslash\{\Gamma_{cc}\cup
      \Gamma_{cm}^N\cup\Gamma_{ci}
    \}$,
    (ii) the mass conservation \eqref{bc3} on the intersections $\Gamma_{cc}$
    (iii) the no-flow boundary condition \eqref{bc5} on
    $\Gamma_{cm}^N$ and $\Gamma_{ci}$.
  \item [(6)] The Dirichlet boundary condition \eqref{bc0} and \eqref{bc4}
    are imposed strongly through the corresponding degrees of freedom (DOFs) on
    $\widehat p_h$ and $\widehat p_h^c$, respectively.
\end{itemize}

The following result further shows that the scheme \eqref{fem} is well-posed.
\begin{theorem}
  Assume the measure of the Dirichlet boundary $\Gamma_D$ is not empty, then
  the solution to the scheme \eqref{fem} exists and is unique.
\end{theorem}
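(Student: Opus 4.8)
The plan is to exploit the finite dimensionality of the trial and test spaces. Since \eqref{fem} is a square linear system -- after lifting the essential boundary data, the unknowns and the test functions range over the same homogeneous space -- existence and uniqueness are equivalent, and both follow from the triviality of the kernel. I would therefore set $f=0$, $q_N=0$ and $p_D=0$ and show that the only solution of the resulting homogeneous system is $(\bld u_h,p_h,\widehat p_h,\bld u_h^c,\widehat p_h^c)=0$.

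The first and main computation is an energy identity obtained by taking the test functions equal to the trial functions, i.e.\ $\bld v_h=\bld u_h$, $q_h=p_h$, $\widehat q_h=\widehat p_h$, $\bld v_h^c=\bld u_h^c$, $\widehat q_h^c=\widehat p_h^c$, and summing \eqref{fem1}--\eqref{fem5}. I expect every coupling term to cancel in pairs: the volumetric terms $\mp(p_h,\nabla\cdot\bld u_h)_{\Th}$ from \eqref{fem1}--\eqref{fem2}, the skeleton terms $\pm\langle\widehat p_h,\bld u_h\cdot\bld n\rangle_{\partial\Th}$ from \eqref{fem1} and \eqref{fem3}, the surface terms $\pm\langle\widehat p_h,\nabla_\Gamma\cdot\bld u_h^c\rangle_{\Th^c}$ from \eqref{fem3} and \eqref{fem4}, and the intersection terms $\pm[\widehat p_h^c,\bld u_h^c\cdot\bld\eta]_{\partial\Th^c}$ from \eqref{fem4} and \eqref{fem5}. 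What survives is
\[
(\mathbb{K}_m^{-1}\bld u_h,\bld u_h)_{\Th}+\int_{\Omega_b}\tfrac{\epsilon}{K_b}(\bld u_h\cdot\bld n)^2\mathrm{ds}+\langle(\epsilon\mathbb{K}_c)^{-1}\bld u_h^c,\bld u_h^c\rangle_{\Th^c}+\int_{\Gamma_{cb}}\alpha(\epsilon\mathbb{K}_c)^{-1}(\bld u_h^c\cdot\bld\eta)^2\mathrm{dr}=0.
\]
Since $\mathbb{K}_m$ and $\mathbb{K}_c$ are symmetric positive definite and $K_b,\alpha>0$, every term is nonnegative, and the coercivity of the first and third terms forces $\bld u_h=0$ on $\Th$ and $\bld u_h^c=0$ on $\Th^c$.

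It remains to recover the pressures, which I regard as the crux. Setting $\bld u_h=0$ in \eqref{fem1} and integrating by parts element by element, using $p_h|_K\in P_0(K)$ so that $\nabla p_h=0$, reduces \eqref{fem1} to $\langle\widehat p_h-p_h,\bld v_h\cdot\bld n\rangle_{\partial\Th}=0$ for all $\bld v_h\in\Vh$. Because the normal trace of the broken $RT_0$ space is surjective onto piecewise constants facet by facet, I can localize $\bld v_h\cdot\bld n$ to a single facet of a single element and conclude $\widehat p_h|_F=p_h|_K$ for every $K\in\Th$ and every facet $F\subset\partial K$. Adjacent elements sharing a facet then carry the same constant, so by connectedness of $\Omega_m$ both $p_h$ and $\widehat p_h$ are globally constant; the homogeneous Dirichlet condition $\widehat p_h|_{\Gamma_D}=0$, available precisely because $\Gamma_D$ has nonzero measure, pins this constant to zero. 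Finally, inserting $\bld u_h^c=0$ and $\widehat p_h=0$ into \eqref{fem4} leaves $[\widehat p_h^c,\bld v_h^c\cdot\bld\eta]_{\partial\Th^c}=0$ for all $\bld v_h^c\in\Vhc$, and the analogous facet-by-facet surjectivity of the in-plane normal trace on $\Th^c$ yields $\widehat p_h^c=0$.

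The main obstacle is this pressure-recovery step rather than the energy identity: it hinges on the surjectivity of the $RT_0$ normal (and in-plane normal) trace operators onto the multiplier spaces $\Mh$ and $\Mhc$, on the connectedness of the computational domain, and -- for genuine uniqueness as opposed to mere solvability up to an additive constant -- on the nonempty Dirichlet boundary. I would finally double-check that the conductive-fracture facets, being interior to the skeleton with single-valued $\widehat p_h$, do not disconnect the pressure graph, and that the blocking Dirac-$\delta$ term, being positive semidefinite, contributes nothing beyond the nonnegativity already used.
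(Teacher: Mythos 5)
Your proposal is correct and takes essentially the same approach as the paper: reduce to uniqueness for the square homogeneous system, use the test-equals-trial energy identity to force $\bld u_h=\bld u_h^c=0$, and then recover the pressures from the remaining equations, using the nonempty Dirichlet boundary to fix the constant. The only deviations are minor: your energy identity correctly retains the nonnegative penalty term $\int_{\Gamma_{cb}}\alpha(\epsilon\mathbb{K}_c)^{-1}(\bld u_h^c\cdot\bld \eta)^2\,\mathrm{dr}$ (which the paper's displayed identity omits, harmlessly), and where the paper invokes inf-sup stability of the $RT_0$--$P_0$ pair to conclude $p_h=\widehat{p}_h=\mathrm{const}$, you prove the same fact directly by facet-wise localization of the broken $RT_0$ normal traces plus connectedness of the mesh.
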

\begin{proof}
  Since the equations in \eqref{fem} leads to a square linear system, we only
  need to show uniqueness.
  Now we assume the source terms in \eqref{fem} vanishes, i.e.,
  $f=p_D=g_N=0$.
  Taking test function to be the same as trial functions in \eqref{fem}
  and adding, we get
  \[
    (\mathbb{K}_m^{-1}\bld u_h, \bld u_h)_{\Th}
    +\int_{\Omega_b}\frac{\epsilon}{K_b}(\bld u_h\cdot\bld n)^2\,\mathrm{ds}
    +\langle(\epsilon\mathbb{K}_c)^{-1}\bld u_h^c, \bld u_h^c\rangle_{\Th^c}
  =0.
  \]
  Hence, $\bld u_h=\bld u_h^c=0$.
 Since $\bld u_h=0$,
  the inf-sup stability of the $RT_0$-$P_0$ finite element pair implies
  that $p_h= \widehat{p}_h = C$ from \eqref{fem1} where $C$ is a constant.
  Since $\Gamma_D$ is not empty and $p_D=0$, we get the constant $C=0$.
  Finally, restricting equation  \eqref{fem4} to a
  single element $F\in \Th^c$ and using the fact that
$\bld u_h^c=0$ and $\widehat p_h=0$, we get
\[
  \int_{\partial F}\widehat p_h^c\bld v_h^c\cdot\bld \eta\,\mathrm{ds} = 0,
  \quad \forall \bld v_h^c\in RT_0(F),
\]
  which then implies that $\widehat p_h^c=0$.
  This completes the proof.
\end{proof}

\subsection{Static condensation and linear system solver}
The linear system \eqref{fem} can be efficiently solved via static condensation,
where the DOFs for $\bld u_h$, $p_h$, and
$\bld u_h^c$ can be locally eliminated, resulting in a coupled global linear system for
the DOFs for $\widehat {p}_h$ and $\widehat {p}_h^c$, which is
symmetric and positive definite.
Efficient linear system solvers for the resulting condensed system is an interesting topic
where one could design efficient decoupling algorithms or robust
monolithic preconditioners.
Here we simply use a sparse direct solver in the computation and postpone the
detailed study
of linear system solvers to our future work.
\subsection{Local pressure postprocessing}
We use the following well-known local (piecewise linear) pressure postprocessing  to improve the accuracy of pressure approximation in the bulk:
find
\[
p_h^*\in W_h^* :=\; \{w\in L^2(\Th):\;
   w|_K\in P_1(K),\quad \forall K\in\Th\},
\]
where $P^1(K)$ is the space of linear polynomials on element $K$,
such that
\begin{subequations}
  \label{postprocess}
\begin{align}
  \label{postprocess1}
(\nabla p_h^*, \nabla q_h^*)_{\Th}
= &\;- (\mathbb{K}_m^{-1}\bld u_h, \nabla q_h^*)_{\Th},\\
(p_h^*, 1)_{\Th}
= &\;
(p_h, 1)_{\Th},
\end{align}
\end{subequations}
for all $q_h^*\in W_h^*$.

\subsection{The hybrizied finite volume method for the transport model}
We consider a standard cell-centered, first-order upwinding finite volume scheme for the transport model \eqref{transport}, coupled with
the implicit Euler method for the temporal discretization.
We hybridize the cell-centered finite volume scheme so that the coupled unknowns live on the mesh skeletons, which simplifies the definition of upwinding fluxes on the conductive fracture interactions (e.g. point $H$ in Figure \ref{fig:geo}).
Hence we use piecewise constant spaces to
approximate the matrix concentration $c_h \in \Wh$ on the mesh $\Th$,
the matrix concentration $\widehat{c}_h \in \Mh$ on the matrix mesh skeleton $\Eh$, and the fracture concentration $\widehat{c}_{c,h}\in M_h^c$
on the fracture mesh skeleton $\Eh^c$.

The hybridized finite volume scheme with implicit Euler temporal discretization is given as follows:
given data $(c_h^{n-1}, \widehat{c}_h^{n-1})\in W_h\times \Mh$ at time $t^{n-1}$,
find $(c_h^{n}, \widehat{c}_h^{n}, \widehat{c}_{c,h}^{n})\in W_h\times \Mh\times M_h^c$
at time $t^n:=t^{n-1}+\Delta t$
with $\widehat{c}_h^n|_{\partial\Omega_{in}} = P_0(c_B(t^n))$ and
$\widehat{c}_{c,h}^n|_{\Gamma_{in}} = P_0(c_{c,B}(t^n))$
such that
\begin{subequations}
\label{transport-eq}
\begin{alignat}{2}
\left(\phi_m\frac{c_h^n-c_h^{n-1}}{\Delta t}, d_h\right)_{\Th}
+\langle\bld u_h\cdot\bld n \widehat{c}_h^{n,*},d_h
\rangle_{\partial\Th}
=&\;
\left(c_h^n f, d_h\right)_{\Th},
\\
-\langle\bld u_h\cdot\bld n \widehat{c}_h^{n,*},\widehat{d}_h
\rangle_{\partial\Th}
+
\left\langle\epsilon\phi_c\frac{\widehat{c}_{h}^n-\widehat{c}_h^{n-1}}{\Delta t}, \widehat{d}_h\right\rangle_{\Th^c}
+[\bld u_h^c\cdot\bld \eta \widehat{c}_{c,h}^{n,*},\widehat{d}_h
]_{\partial\Th^c} = &\;0,\\
[\bld u_h^c\cdot\bld \eta \widehat{c}_{c,h}^{n,*},\widehat{d}_{c,h}
]_{\partial\Th^c} = &\;0,
\end{alignat}
for all
$(d_h, \widehat{d}_h, \widehat{d}_{c,h})\in W_h\times \Mh\times M_h^c$
with $\widehat{d}_h|_{\partial\Omega_{in}} = 0$ and
$\widehat{d}_{c,h}|_{\Gamma_{in}} = 0$,
where the upwinding fluxes are given as follows:
\begin{align}
\widehat{c}_h^{n,*}|_{\partial K} = \left\{
\begin{tabular}{ll}
$c_h^n$     & if $\bld u_h\cdot\bld n_K >0$, \\[1ex]
$\widehat c_h^n$     & if $\bld u_h\cdot\bld n_K \le0$,
\end{tabular}
\right. \\
\widehat{c}_{c,h}^{n,*}|_{\partial F} = \left\{
\begin{tabular}{ll}
$\widehat c_h^n$     & if $\bld u_h^c\cdot\bld \eta_F >0$, \\[1ex]
$\widehat c_{c,h}^n$     & if $\bld u_h^c\cdot\bld \eta_F \le0$.
\end{tabular}
\right.
\end{align}
\end{subequations}

\subsection{Remarks on the mesh restrictions and comparison with existing methods}
The proposed flow and transport solvers \eqref{fem}, \eqref{transport-eq} require the mesh to be fitted to the conductive fractures, while allowing for an unfitted treatment of the blocking fractures.
While the derivation of numerical schemes that work on fully unfitted meshes is beyond the scope of this paper, here we propose a simple mesh postprocessing technique to convert a general unfitted background matrix mesh to an {\it immersed} mesh that is fitted to all the fractures.
Similar immersing mesh techniques were used for interface problems \cite{IH11,FT14,ABLR15,CWW17}.
Below we illustrate the procedure of immersing a single fracture to an unfitted tetrahedral mesh in 3D:
\begin{itemize}
    \item [(i)] Represent the fracture geometry as the zero level set of a continuous piecewise linear function $\phi_h$ on the background mesh.
    Perturb $\phi_h$ slightly if necessary to avoid fracture pass through the background mesh nodes.
    \item [(ii)] Loop over the background mesh edges, find the cut edges where $\phi_h$ has opposite sign on the two edge endpoints. For each cut edge, compute the coordinates of the cut vertex $v_c$ where $\phi_h(v_c) = 0$, and add $v_c$ to the mesh nodes.
\item [(iii)] Loop over the background mesh faces, find the cut faces
which contains the cut vertices.
Order the cut vertices based on their vertex label number.
Loop over the cut vertices, for each (sub-)face that contains the
cut vertex, split the (sub-)face by 2 by connecting the cut vertex with the opposite (sub-)face node.
\item [(iv)] Loop over the background mesh elements, find the cut elements
which contains the cut vertices.
Order the cut vertices based on their vertex label number.
Loop over the cut vertices, for each (sub-)element that contains the
cut vertex, split the (sub-)element by 2 by connecting the cut vertex with the opposite two (sub-)element nodes that are not aligned with the cut edge.
\end{itemize}
The above recursive bisection procedure guarantees that the fracture lies on the boundary of the generated immersed mesh.
The case with multiply intersecting fractures can be treated by recursion.
Here we note that the generated immersed mesh is usually highly anisotropic since the background mesh is completely independent of the fracture configurations. Our numerical results in the next section suggest that the hybrid-mixed method \eqref{fem} works well on these anisotropic immersed meshes. Typical 2D immersed meshes for complex fracture configurations are given in Figure~\ref{fig:complex0} and  Figure~\ref{fig:realX} below.

We now briefly compare our proposed fractured flow solver \eqref{fem}
with some existing schemes in \cite{Berre_2021}, which were used to solve a series of 4 benchmark problems in 3D fractured porous media flow.
Among the 17 schemes in \cite[Table 1]{Berre_2021}, 7  were shown to yield no significant deviations for all the tests, see \cite[Figure 18]{Berre_2021}, which include the
multi-point flux approximation (UiB-MPFA),  the lowest order mixed virtual element method (UiB-MVEM), and the lowest order Raviart-Thomas mixed finite element method (UiB-RT0) mainly developed by the research group in the
University of Bergen \cite{UiB0, UiB1, Boon2018},
the MPFA scheme (USTUTT-MPFA) and the two-point flux approximation scheme (USTUTT-TPFA\_Circ) developed by Flemisch et al. \cite{USTUTT},
the mimetic finite difference method (LANL-MFD) \cite{MFD14},
and the hybrid finite volumes discontinuous hydraulic head method
(UNICE\_UNIGE-HFV\_Disc) developed by Brenner et al. \cite{HFV}.
Among these 7 schemes, the first three schemes use a mixed dimensional interface model that require the modeling of co-dimension 1-3 fractured flows, where the mesh can be non-matching across subdomains, but needs to be geometrically conforming to the fractures.
On the other hand, the last four schemes work on a mixed dimensional interface model where only fractured flow in co-dimension 1 were modeled, which require the mesh to be completely conforming to the fractures.
All of these schemes yield a locally conservative velocity approximation.
We further note that the two methods in \cite{Berre_2021} that allow for general nonconforming meshes, namely the Lagrange multiplier method \cite{LMFEM2D, LMFEM2D2, LMFEM3D} and the EDFM method \cite{EDFM20}, cannot handle blocking fractures and do not provide a locally conservative velocity approximation.

Numerical results of our proposed scheme \eqref{fem} for the benchmark problems in \cite{Berre_2021} indicate that our results yield no significant deviations with the above mentioned 7 schemes, see details in the next section.  Our scheme also produce a locally conservative velocity approximation, and the resulting linear system after static condensation is a symmetric positive definite (SPD) problem with global unknowns involve pressure DOFs on the mesh skeleton only. The number of the global unknowns of our scheme is roughly $N_F$, which is the total number of mesh faces, and the average nonzero entries per row in the system matrix is 7 (a pressure DOF on an interior tetrahedral face is connected to  6 neighboring face pressure DOFs).
Concerning the computational cost of our scheme, it is more expensive than
the TPFA scheme (USTUTT-TPFA\_Circ) which lead to an SPD system with roughly $N_C$ cell-wise pressure DOFs and about 5 nonzero entries per row in the system matrix, is slightly less expensive than
the cell-based MPFA schemes (UiB-MPFA, USTUTT-MPFA), which lead to SPD systems with roughly $N_C$ cell-wise pressure DOFs
and about 20-50 nonzero entries per row in the system matrix, and is
significantly cheaper than the schemes UiB-MVEM, UiB-RT0, LANL-MFD, and UNICE\_UNIGE-HFV\_Disc, which lead to saddle point systems with total number of roughly $N_F$ velocity DOFs and $N_C$ pressure DOFs. Note that $N_F\approx 2 N_C$. Hence, our proposed scheme is also highly competitive in terms of computational costs.
Another distinctive advantage of our scheme over these 7 schemes is that
the mesh can be completely nonconforming to the blocking fractures.

\section{Numerics}
\label{sec:num}
In this section,
we present
detailed numerical results for the proposed hybrid-mixed method
for the four 2D benchmark test cases in \cite{FLEMISCH2018239}
and the four 3D benchmark test cases in
\cite{Berre_2021}.
We name the method \eqref{fem} as {\sf HM-DFM} since it is a hybrid mixed method for
a discrete fracture model.
When plotting the pressure or hydraulic head disctribution over line segments, we evaluate
the second-order postprocessed solution in \eqref{postprocess} for the proposed
method.
The focus of the numerical experiments is on the verification of the accuracy of our proposed
flow model \eqref{model} and the associated method \eqref{fem}.
Hence, we test the flow solver \eqref{fem} for all the 8 benchmark cases.
Meanwhile, we also test the accuracy of velocity approximation by feeding them to the transport problem \eqref{transport}, which is solved using the scheme \eqref{transport-eq} for three cases, namely Benchmark 2 in 2D, and Benchmark 5/6 in 3D. Furthermore, convergence study via mesh refinements was conducted for  Benchmark 2 and Benchmark 6 below.

Our numerical simulations are performed using the open-source finite-element software
{\sf NGSolve} \cite{Schoberl16}, \url{https://ngsolve.org/}.
Jupyter notebooks for reproducing all numerical examples in this section
can be found in the git repository
\url{https://github.com/gridfunction/fracturedPorousMedia}.
Visualization of meshes for the 3D benchmark examples and interactive
contour plots of the pressure/hydraulic head can also be found therein.


\subsection{Benchmark 1: Hydrocoin (2D)}
This example is originally a benchmark for heterogeneous groundwater flow presented in the
international Hydrocoin project \cite{Swedish}.
A slight modification for the geometry was made in
\cite[Section 4.1]{FLEMISCH2018239}, and we follow the settings therein.
In particular,
the bulk domain is a polygon with vertices
$A=(0, 150), B=(400, 100), C=(800, 150), D=(1200,100), E=(1600, 150), F=(1600, -1000),
G=(1500, -1000), H=(1000, -1000)$ and $I=(0, -1000)$ measured in meters.
There are two conductive fractures in the domain $\{BG\}$ and $\{DH\}$.
The fracture $\{BG\}$ has thickness $\epsilon=5\sqrt{2} m$ and
the fracture $\{DH\}$ has thickness $\epsilon=33/\sqrt{5} m$.
The permeability (hydraulic conductivity) is $\mathbb
K_m=10^{-8}m/s$ in the bulk and $\mathbb K_c=10^{-6}m/s$ in the fractures.
Dirichlet boundary condition $p=\text{height}$ is imposed on the top boundary,
and homogeneous Neumann boundary condition is imposed on the rest of the
boundary.
Here the unknown variable $p$ is termed as the piezometric head according to \cite{Swedish}.
The quantity of interest is the distribution of the piezometric head $p$
along the horizontal line at a depth of $200 m$.

We apply the method \eqref{fem} on a uniform triangular mesh with mesh size
$h=60$, see the left panel of Figure \ref{fig:hydro}, which leads to $1,115$ matrix elements and $44$ fracture elements.
On this mesh, the number of the globally coupled DOFs is $1,779$, in which
$1,691$ DOFs are associated with the bulk hybrid variable $\widehat p_h$,
and
$43$ DOFs are associated with the fracture hybrid variable $\widehat p_h^c$.
In the right panel of Figure \ref{fig:hydro},
we record the postprocessed piezometric head $p_h^*$ in \eqref{postprocess}
along the line segment  $z=-200 m$, where $z$ is the horizontal direction, along
with the reference data obtained from a mimetic finite difference method on
a very fine mesh (with $889,233$ DOFs). We observe that the results for the  proposed method on such a coarse mesh
already shows a good agreement
with the reference data.

\begin{figure}[ht]
\centering
\includegraphics[width=0.55\textwidth]{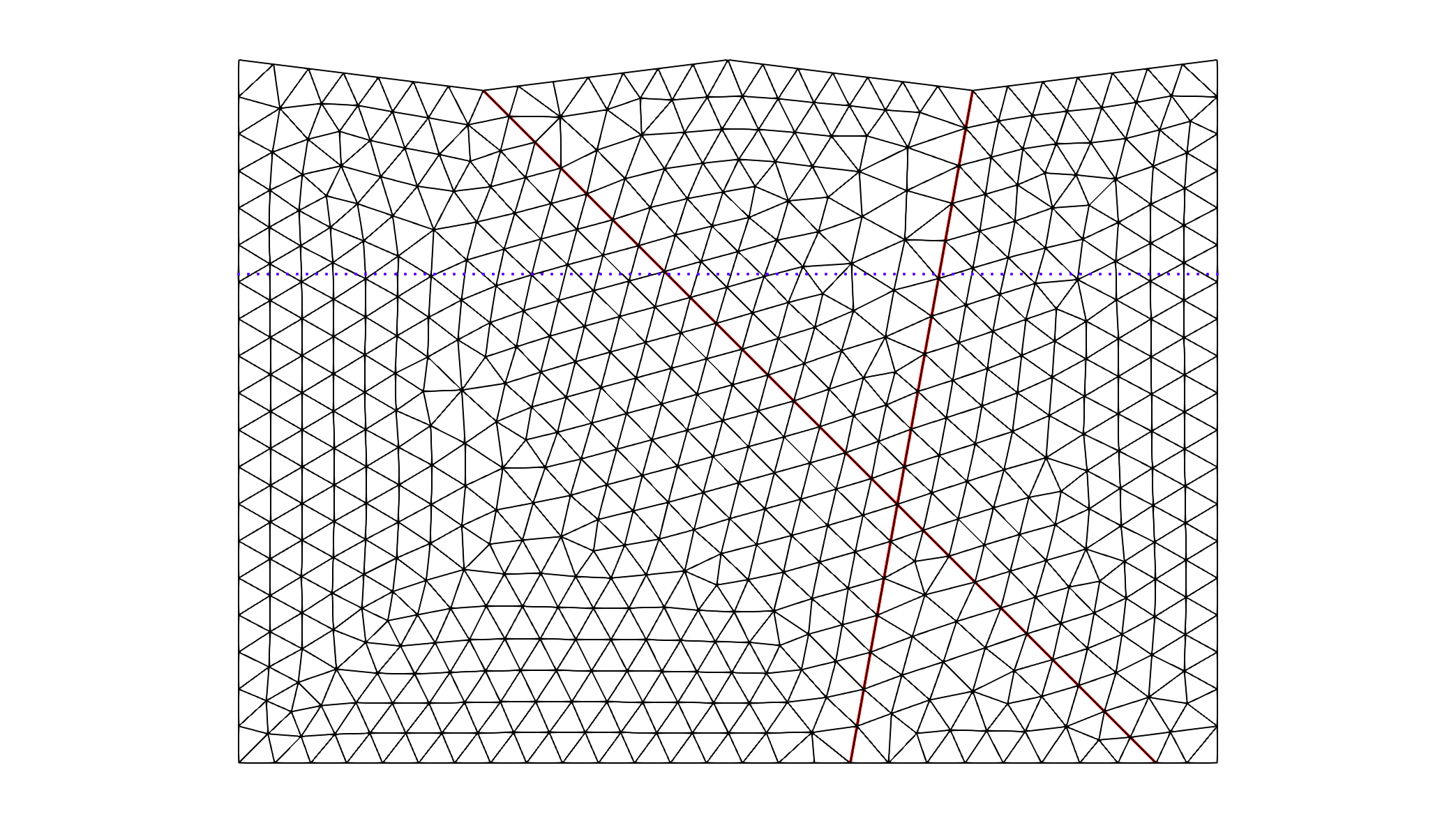}
\includegraphics[width=0.43\textwidth]{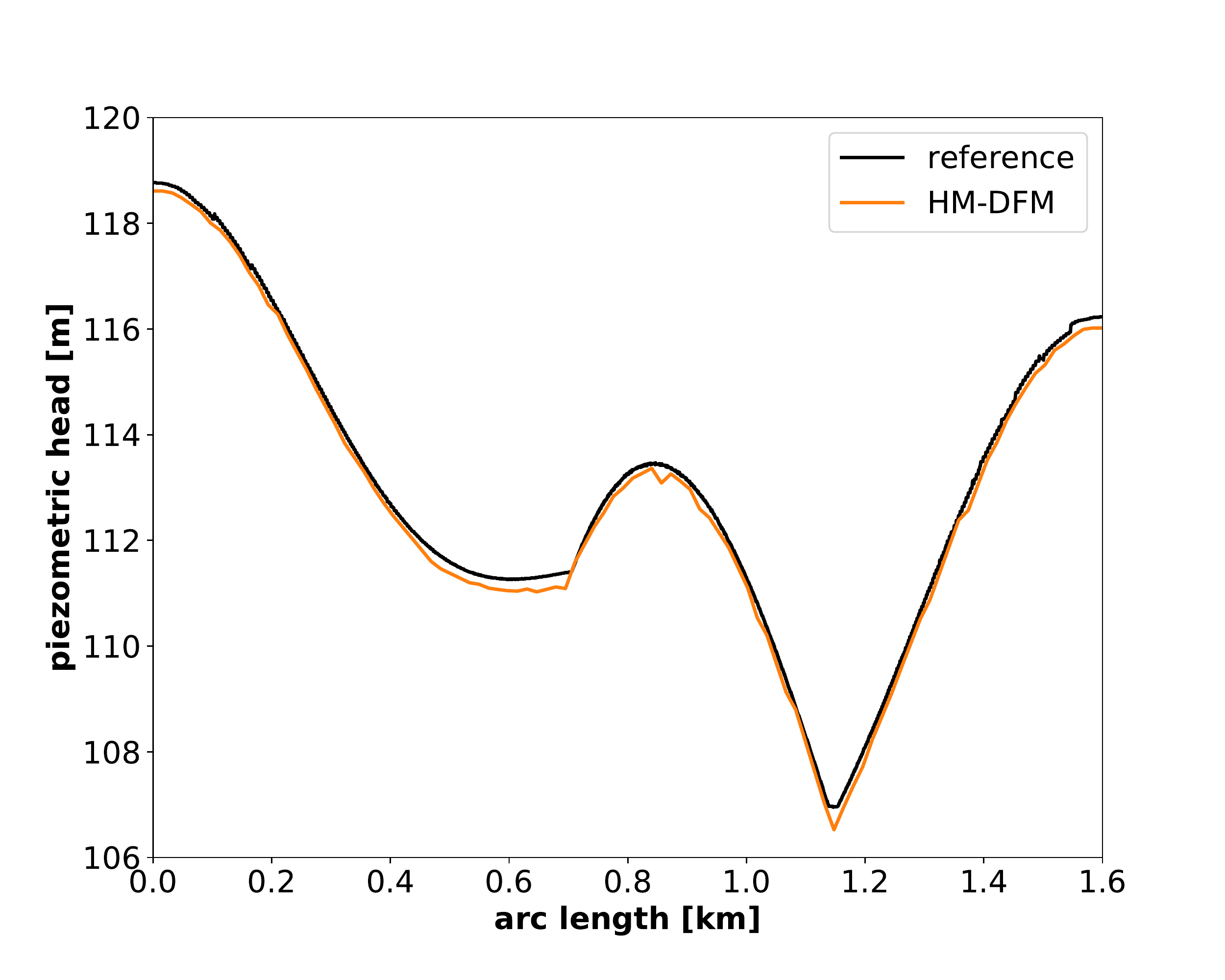}
\caption{Benchmark 1. Left: computational mesh. Right:
  piezometric head along the line $z=-200 m$ (dotted blue line on the left
  figure).
}
\label{fig:hydro}
\end{figure}

\subsection{Benchmark 2: Regular Fracture Network (2D)}
This test case is originally from \cite{geiger} and is modified by \cite{FLEMISCH2018239},
which simulates a regular fracture network in a square porous media.
The computational domain including the fracture network and boundary conditions
is shown in Figure \ref{fig:geiger}.
\begin{figure}[ht]
  \centering
  \includegraphics[width=.6\textwidth]{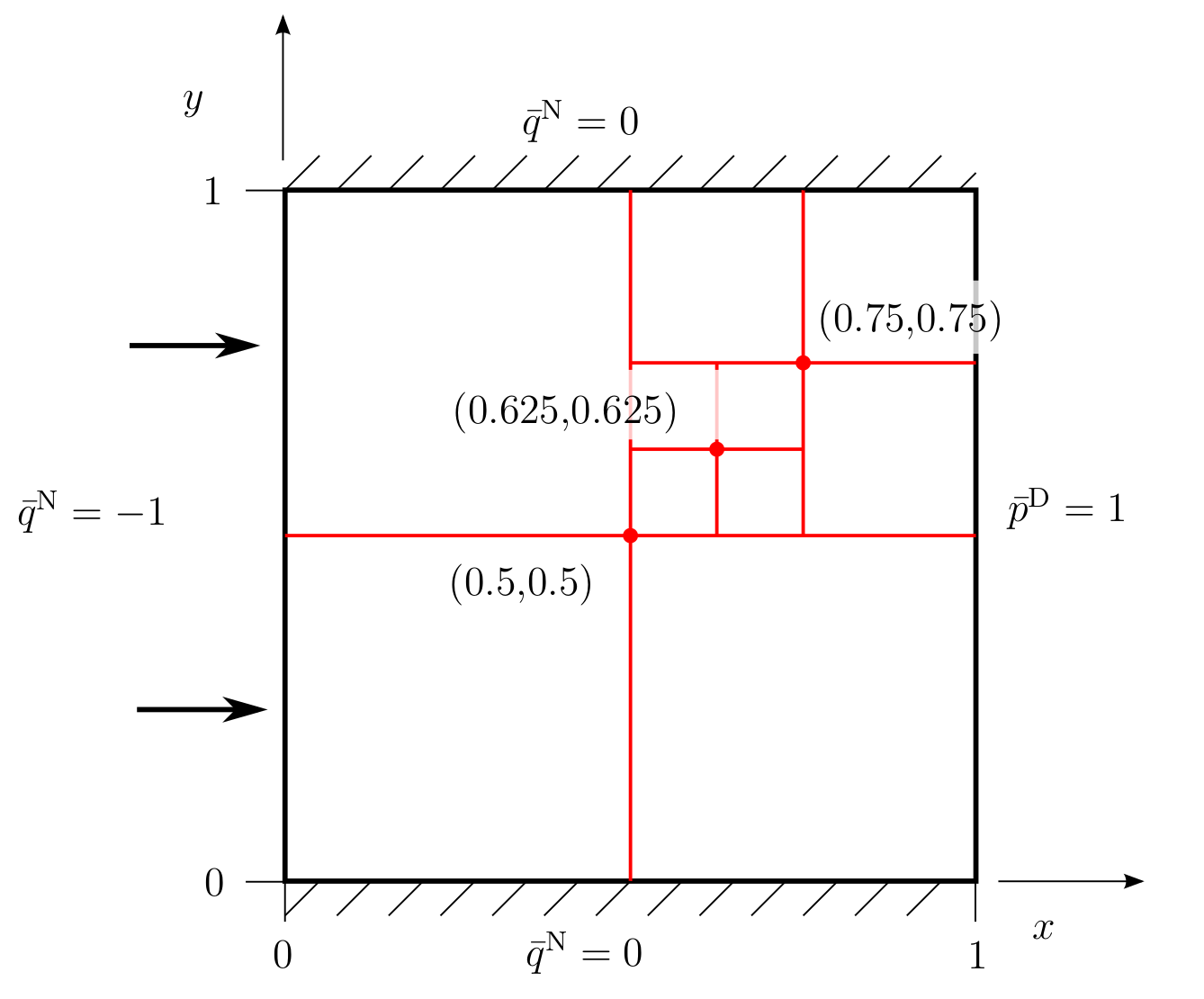}
  \caption{Benchmark 2. Domain and boundary conditions.}
  \label{fig:geiger}
\end{figure}
The matrix permeability is set to $\mathbb K_m=\mathbb I$, and fracture thickness is
$\epsilon = 10^{-4}$. Two cases of fracture permeability was considered:
(i) a highly conductive network with $\mathbb K_c = 10^4\mathbb I$, (ii) a blocking
fracture with $K_b=10^{-4}$.

We apply the method \eqref{fem} on a
triangular mesh with $1,348$ matrix elements and
$91$ fracture elements, see the left panel of Figure \ref{fig:geiger0}.
For the blocking fracture case, we also present the result on a unfitted
triangular mesh with $1,442$ matrix elements.

For the conductive fracture case, the number of the globally coupled DOFs is $2,127$, in which
$2,041$ DOFs are associated with the bulk hybrid variable $\widehat p_h$,
and
$86$ DOFs are associated with the fracture hybrid variable $\widehat p_h^c$.
The pressure distributions  along two lines, one horizontal at $y = 0.7$ and
one vertical at $x = 0.5$ are shown in Figure \ref{fig:geiger1}, along with
the reference data obtained from a mimetic finite difference method on
a very fine mesh (with $1,175,056$ DOFs).
Similar to the previous example,
we observe that the results for the proposed method  show a good agreement with the reference data.

For the blocking fracture case, the number of the globally coupled DOFs is $2,041$ on the fitted mesh and
is $2,188$ on the unfitted mesh.
The pressure distribution  along the lines $(0,0.1)$--$(0.9,1.0)$
is  shown in Figure \ref{fig:geiger2}.
Again, we observe a very good agreement with reference data for the results on
the fitted mesh. The result on the unfitted mesh case is slightly off due to mesh nonconformity,
which is expected as it could not capture the pressure discontinuity across the
blocking fractures.

\begin{figure}[ht]
  \centering
  \begin{tabular}{cc}
    \includegraphics[width=0.48\textwidth]{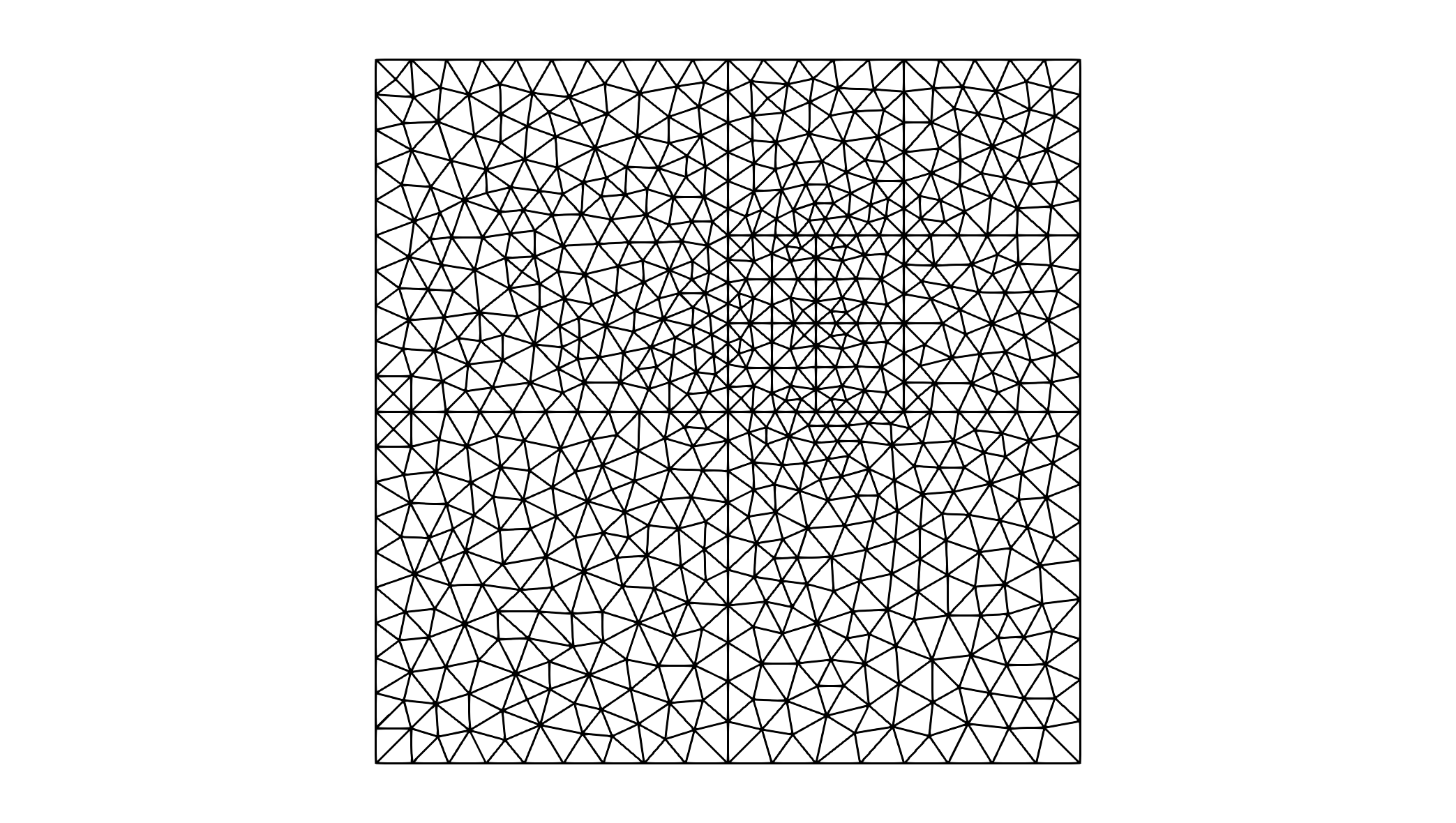}&
    \includegraphics[width=.48\textwidth]{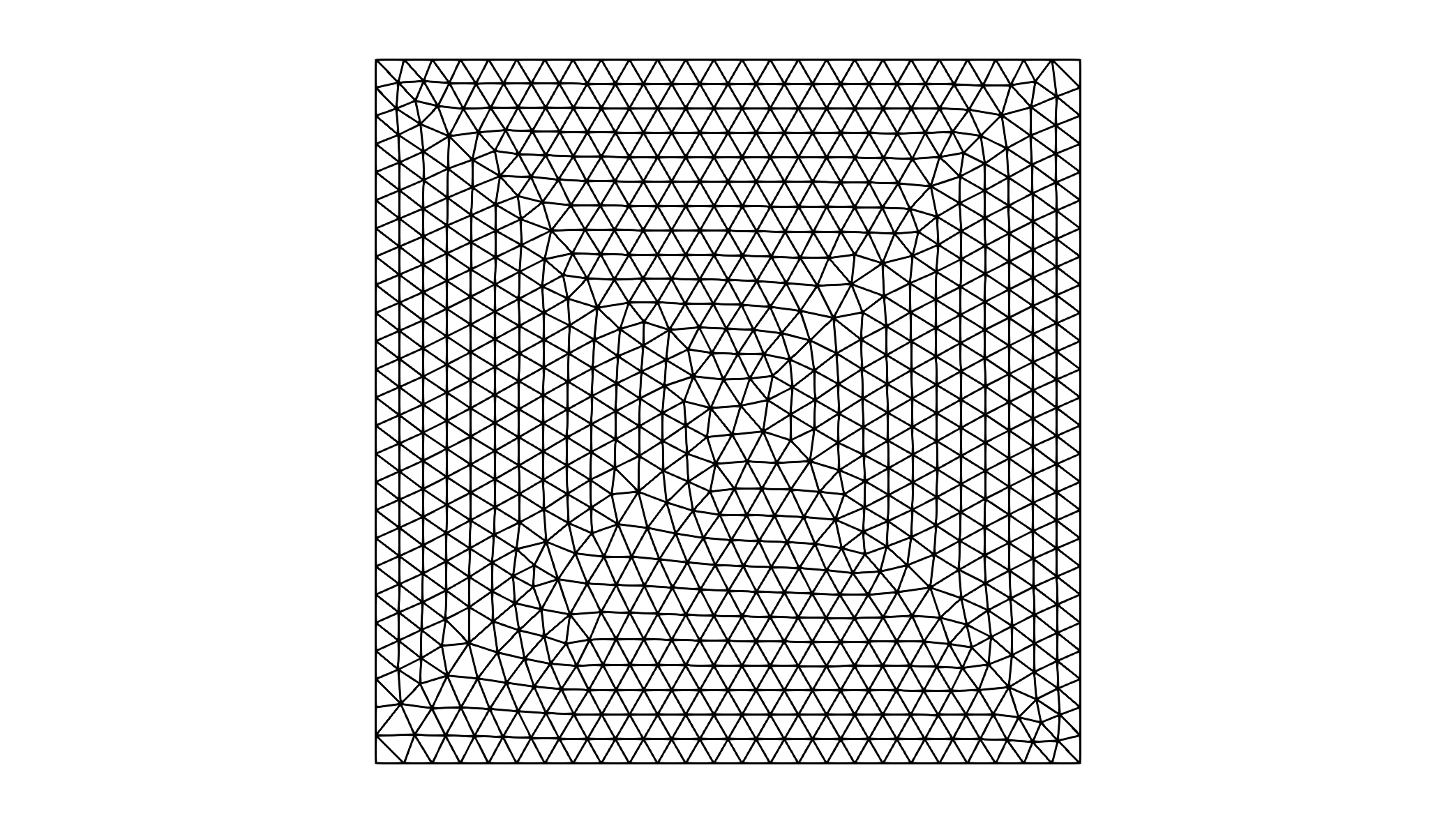}\\
    (a) a fitted mesh. &
   (b) a unfitted mesh.
  \end{tabular}
  \caption{Benchmark 2: computational meshes.
    The fitted mesh on the left panel is used for both conductive and blocking
    fracture cases.
    The unfitted mesh on the right panel is used only for the
    blocking fracture case.
}
  \label{fig:geiger0}
\end{figure}

\begin{figure}[ht]
  \centering
  \begin{tabular}{cc}
    \includegraphics[width=.43\textwidth]{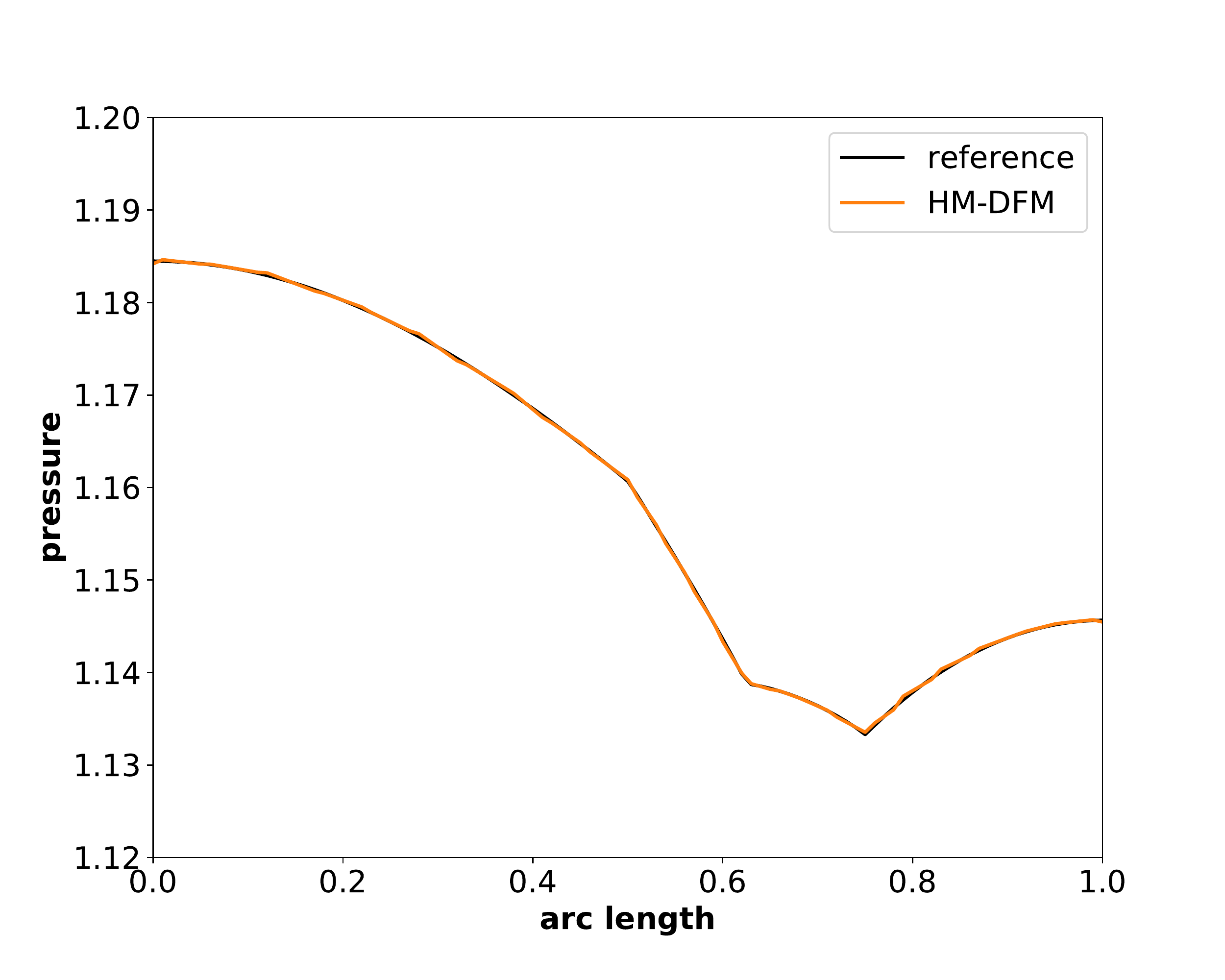}&
    \includegraphics[width=.43\textwidth]{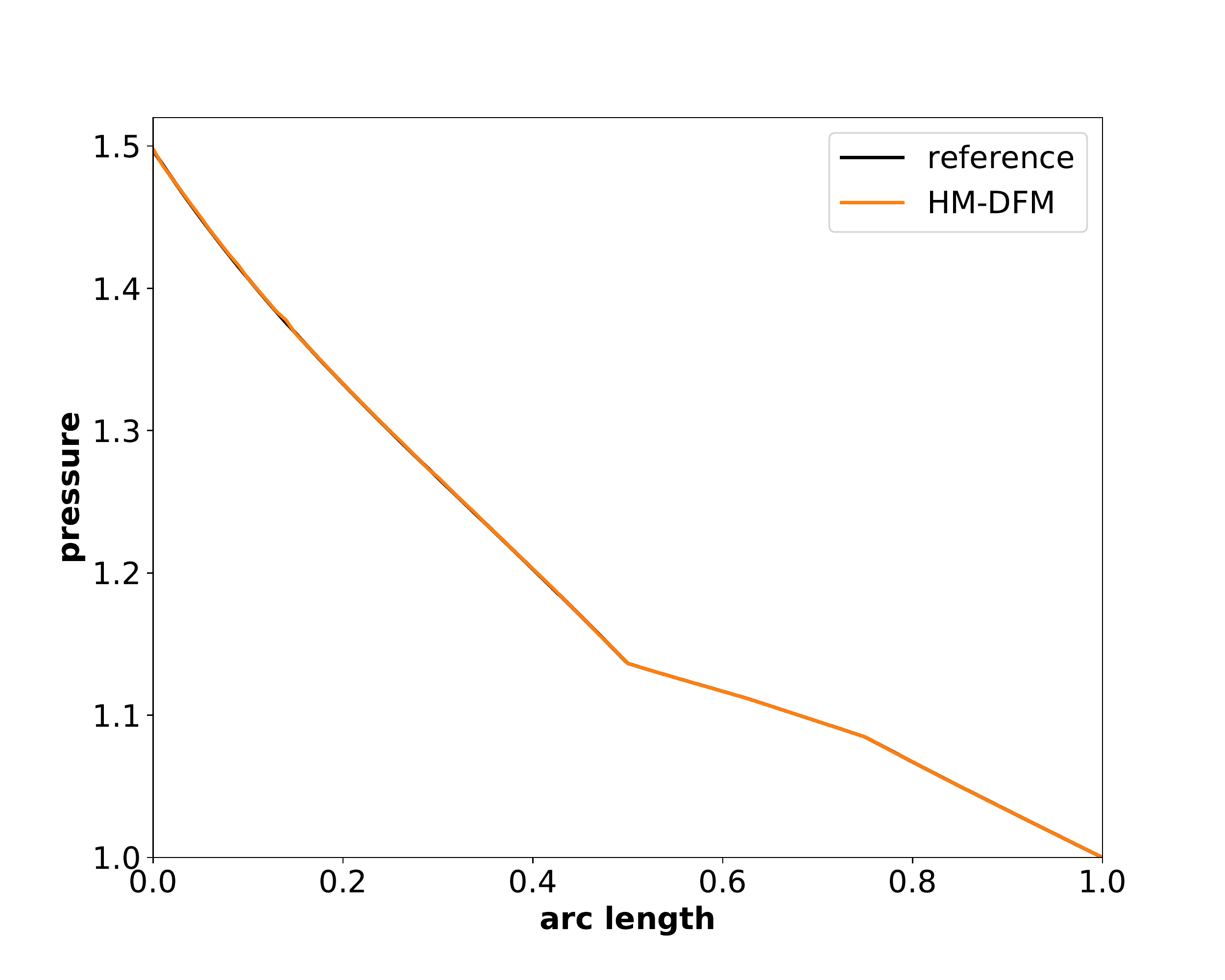}\\
    (a) Horizontal line at $y=0.7$. &
   (b) Vertical line at $x=0.5$.
  \end{tabular}
  \caption{Benchmark 2 with conductive fractures: pressure distribution along two
  lines. }
  \label{fig:geiger1}
\end{figure}

\begin{figure}[ht]
  \centering
    \includegraphics[width=.63\textwidth]{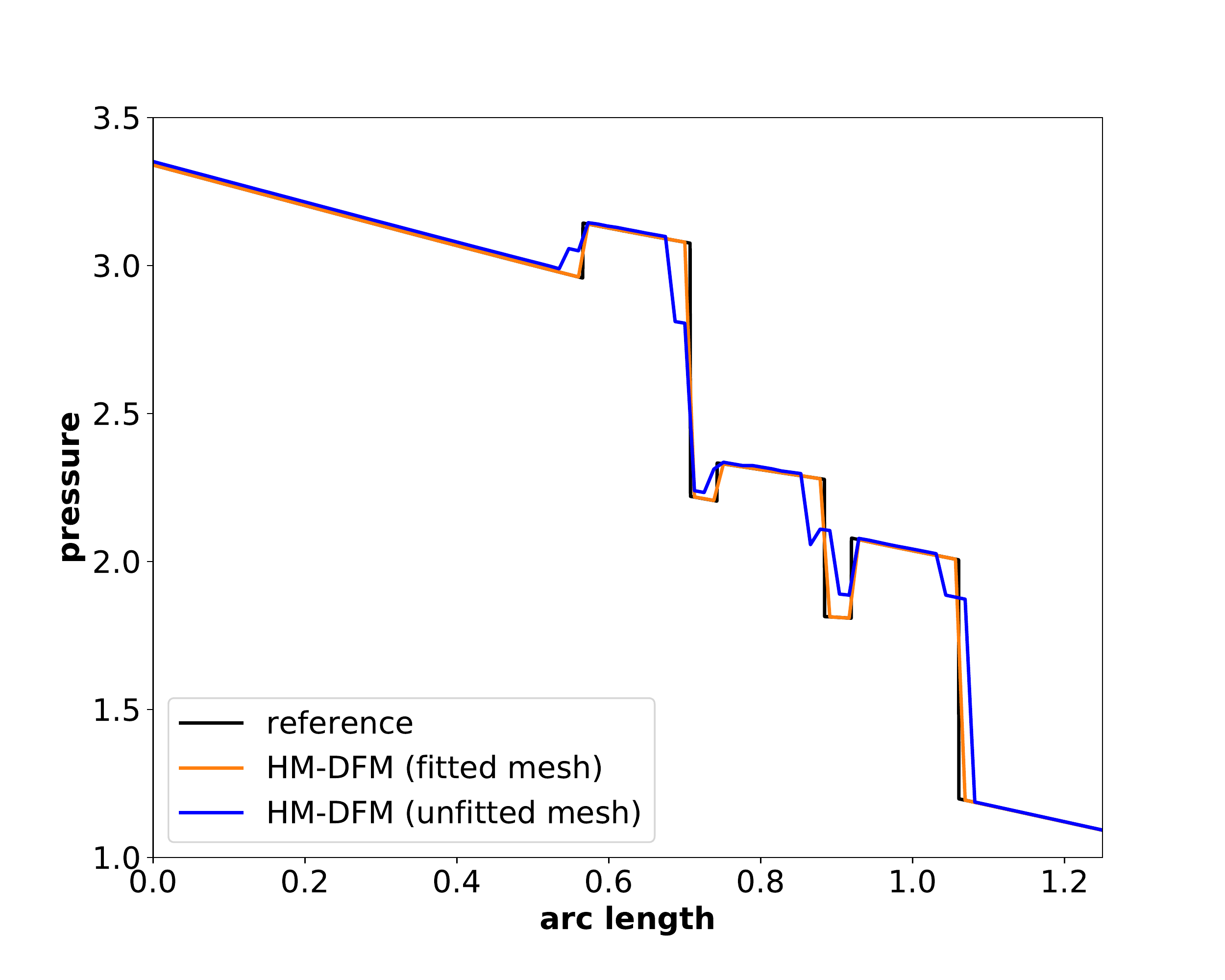}
  \caption{Benchmark 2 with blocking fractures:
    values along the line (0,0.1)--(0.9,1.0). }
  \label{fig:geiger2}
\end{figure}

\subsubsection{Coupling with transport and convergence study with mesh refinements}
After the velocity fields are computed from the scheme \eqref{fem},
we feed them to the transport model \eqref{transport}, and solve it by using the hybrid finite volume scheme \eqref{transport-eq}.
We take the porosities $\phi_m=0.1$, $\phi_c=0.9$ in the model \eqref{transport}, with the initial concentrations $c_0=c_{c,0}=0$,
and set the left boundary as the
inflow boundary for the concentrations, with $c_B=c_{c,B}=1$.
The final time of simulation is $T=0.1$.
Convergence of our coupled scheme \eqref{fem} and \eqref{transport-eq} is checked via a mesh refinement study, where the initial meshes are given in Figure~\ref{fig:geiger0}, and three level of uniform mesh refinements are applied afterwards.
The constant time step size is taken to be $\Delta t = 2^{-l}\times 5\times 10^{-3}$, where $l$ is the mesh refinement level.
Since there is no analytic solution to the problem, we provide a reference solution using the coupled scheme \eqref{fem} and \eqref{transport-eq}
on the fourth level refined fitted mesh (with about  345k elements) with a small time step size $\Delta t = 3.125\times 10^{-5}$.
Contour of matrix concentrations of the reference solution at time $t=0.05$ and $t=0.1$ are presented in Figure~\ref{fig:geigerX}, where we clearly observe the conducting and blocking effects of the respective fractures.
\begin{figure}[ht]
  \centering
  \subfigure[Conductive fractures,  $t=0.05$]{\includegraphics[width = 3.0in]{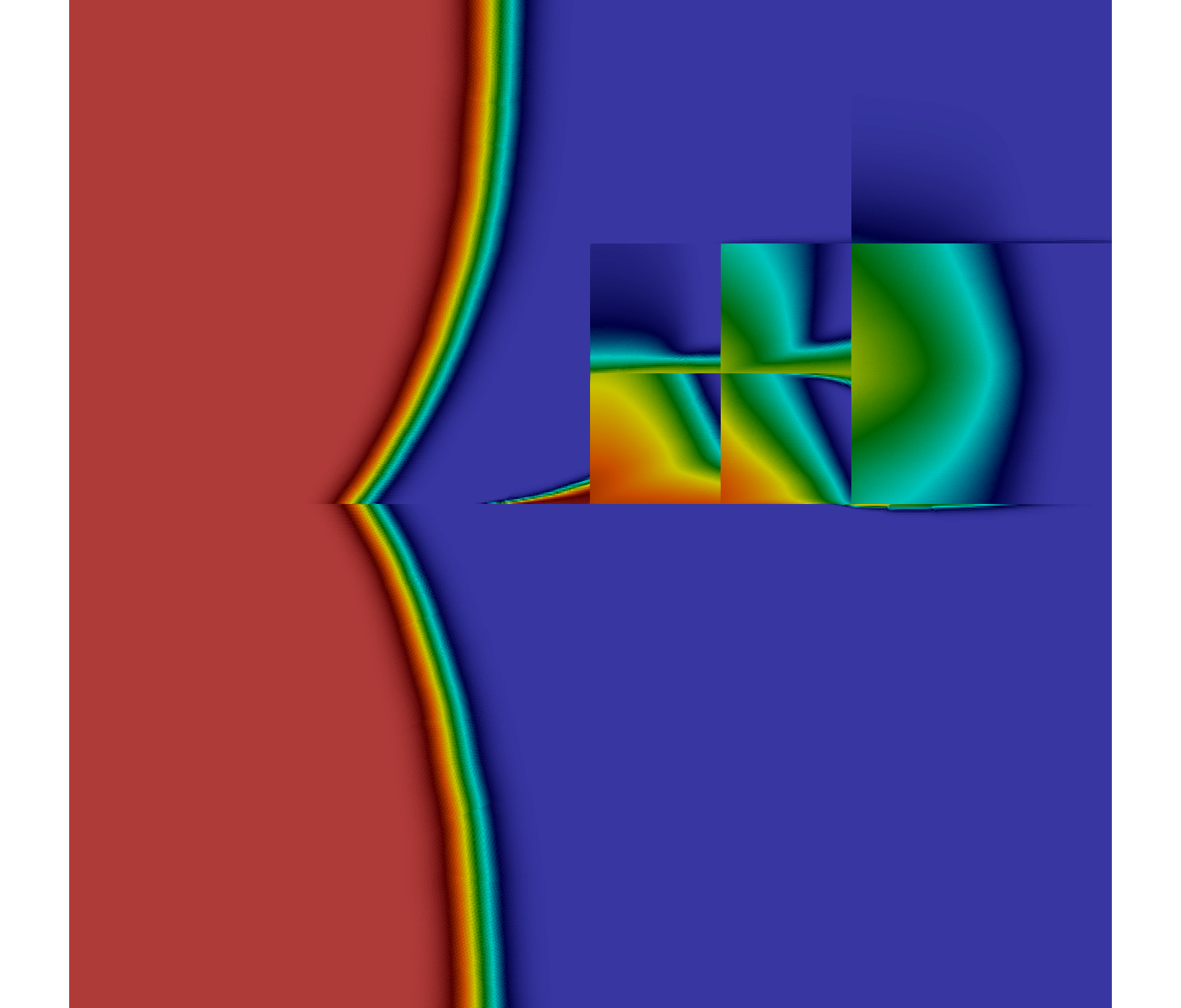}}
    \subfigure[Conductive fractures,  $t=0.1$]{\includegraphics[width = 3.0in]{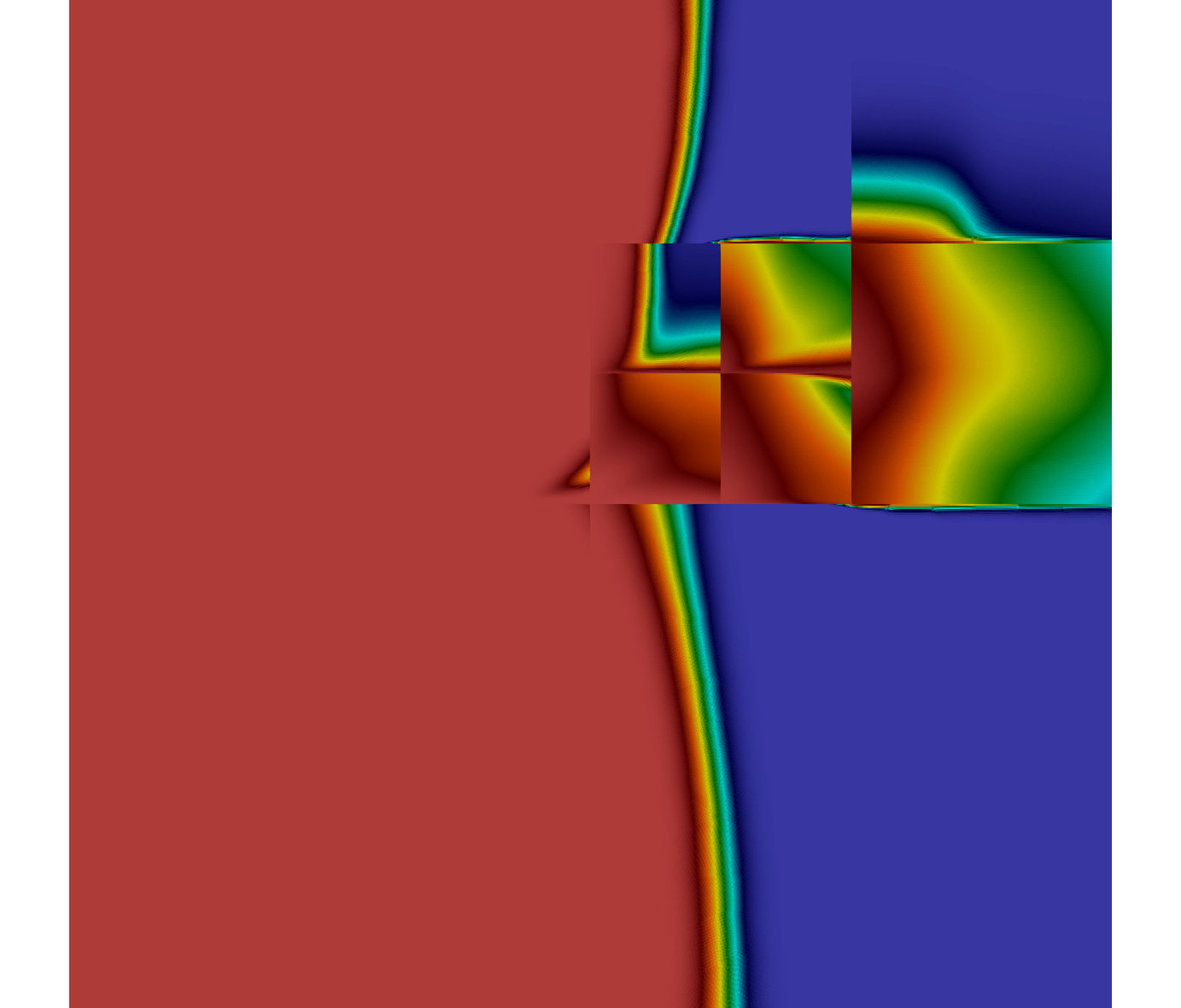}}
    \subfigure[Blocking fractures,  $t=0.05$]{\includegraphics[width = 3.0in]{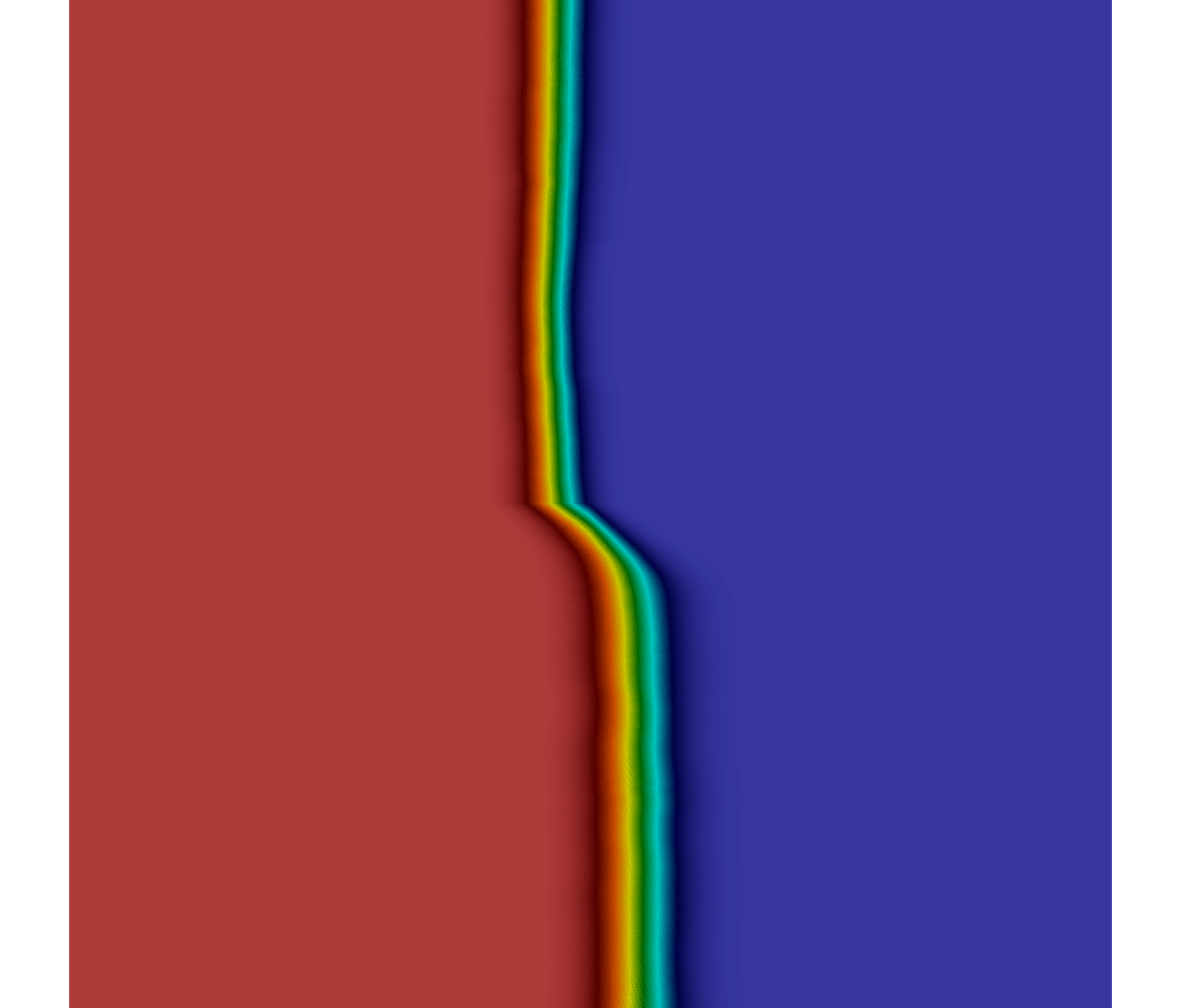}}
    \subfigure[Blocking fractures,  $t=0.1$]{\includegraphics[width = 3.0in]{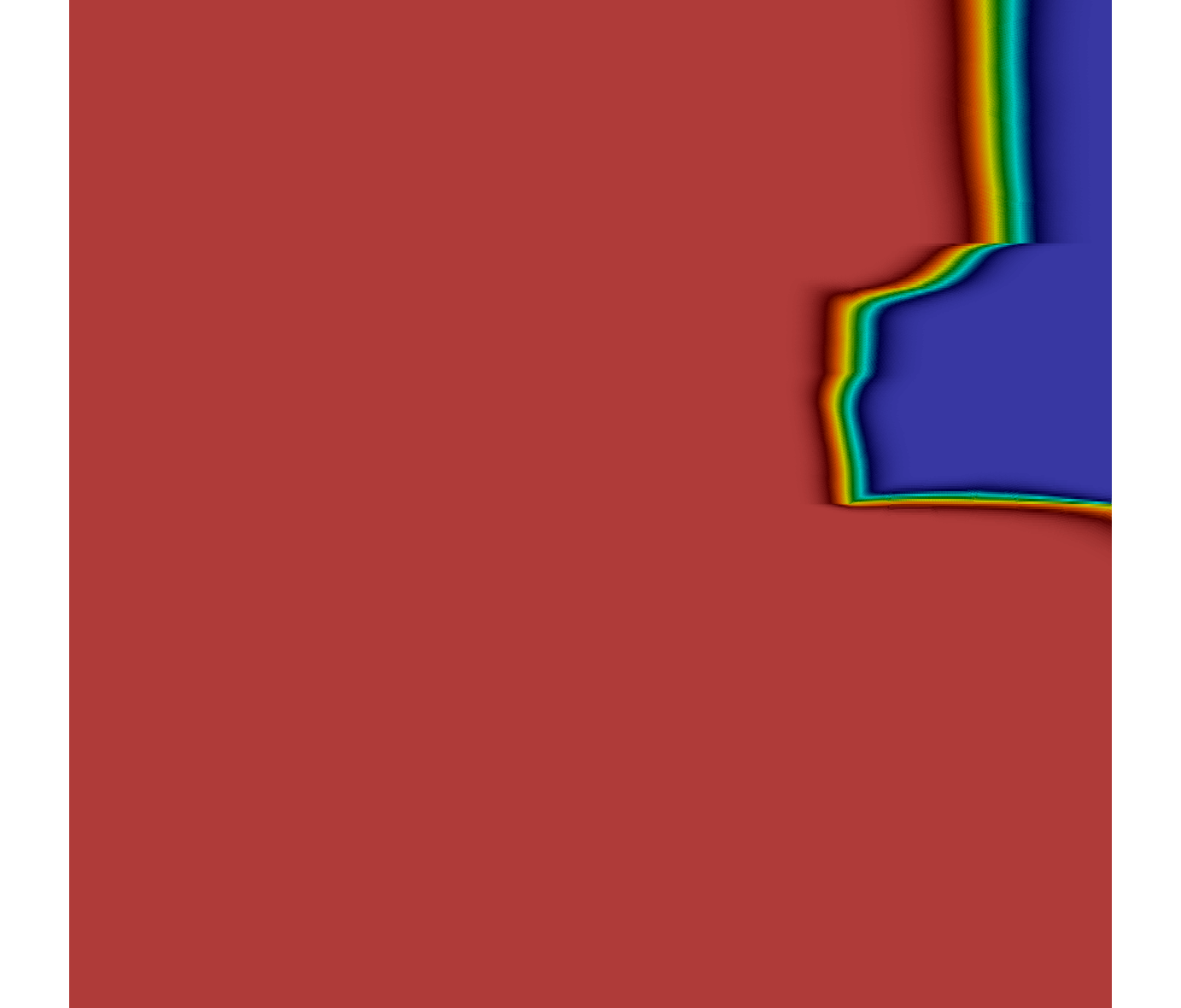}}
  \caption{Benchmark 2: Matrix concentration at time $t=0.05$ (left) and
  $t=0.1$ (right). Top row: conductive fractures. Bottom row: blocking fractures. Color range: 0(blue)--1(red).
  Solution obtained on the fourth level refined mesh with
a  small time step size $\Delta t = 3.125\times 10^{-5}$.
  }
  \label{fig:geigerX}
\end{figure}
Moreover, we plot the computed matrix concentrations along the cut line $y=0.7$ in Figure~\ref{fig:geigerY}, where we observe convergence as the mesh refines.
\begin{figure}[ht]
  \centering
  \subfigure[Conductive fractures,  fitted mesh]{\includegraphics[width = 3.0in]{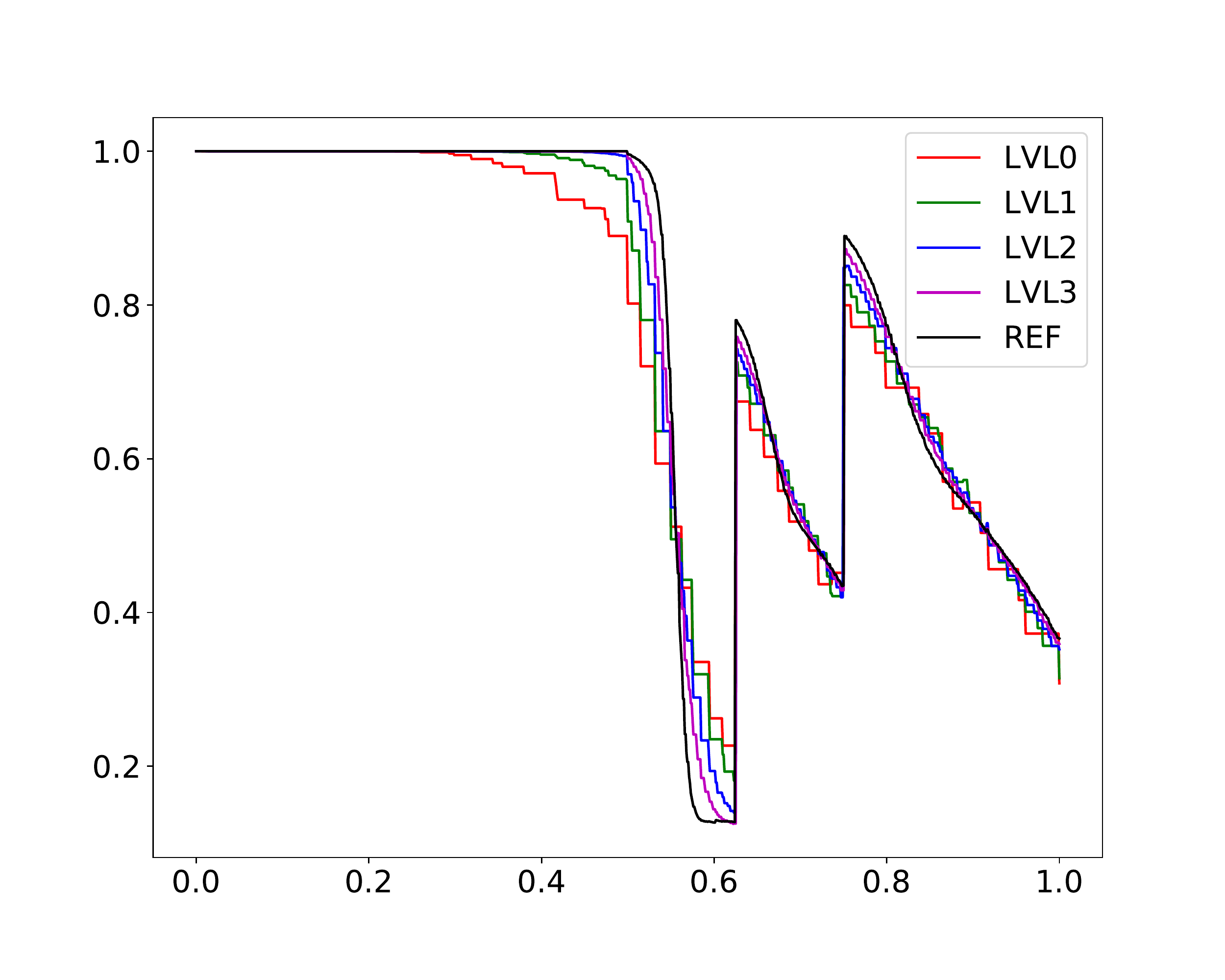}}\\
    \subfigure[Blocking fractures,  fitted mesh]{\includegraphics[width = 3.0in]{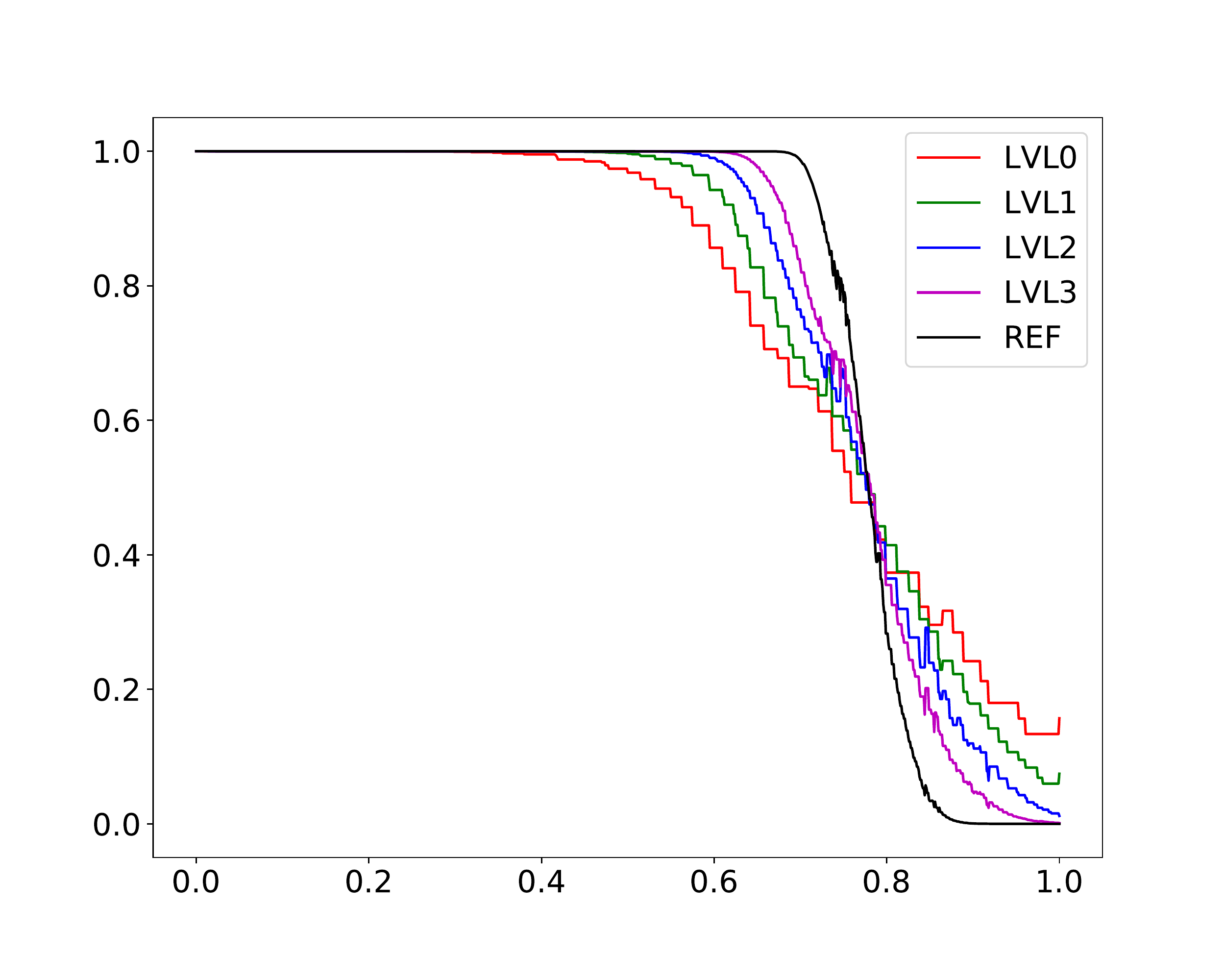}}
    \subfigure[Blocking fractures,  unfitted mesh]{\includegraphics[width = 3.0in]{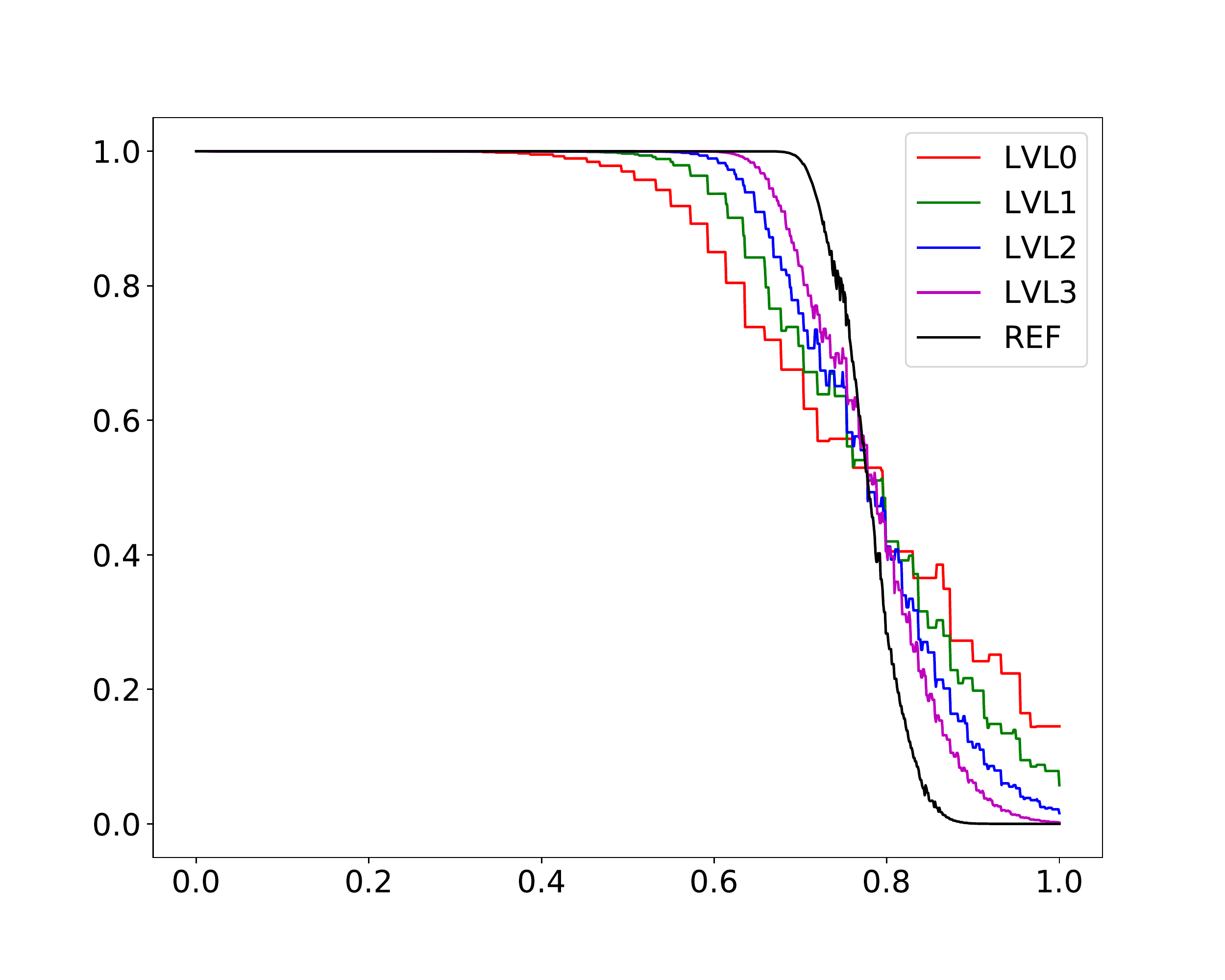}}
  \caption{Benchmark 2: Matrix concentration along the line  $y=0.7$ at time $t=0.1$ for the solution on different meshes. LVL stands for the number of  mesh refinement levels.
  Reference solution is obtained on
    the fourth level refined fitted mesh with a small time step size
    $\Delta t = 3.125\times 10^{-5}$.
  }
  \label{fig:geigerY}
\end{figure}

Finally, the $L^2$-errors in the matrix velocity and postprocessed matrix pressure, and the $L^2$-errors in  the matrix concentration at final time $T=0.1$
are recorded in Table~\ref{tab:geiger1} for the conductive fracture case, in Table~\ref{tab:geiger2} for the blocking fracture case on fitted meshes and
in Table~\ref{tab:geiger3} for the blocking fracture case on unfitted meshes.
From Table~\ref{tab:geiger1} for the conductive fracture case, we observe that the convergence rate in the velocity approximation is first order and that in the postprocessed pressure approximation is second order, which is consistent with the expected convergence behavior of the hybrid-mixed method for the equi-dimensional case \cite{RT77,AB85}, and the convergence rate for the
concentration is about $1/2$, which is also expected
for the hybridized finite volume scheme
due to the concentration discontinuities in the domain.
Similar convergence behavior was observed in Table~\ref{tab:geiger2}
for the blocking fracture case on fitted meshes.
From Table~\ref{tab:geiger3} we observe $1/2$ order convergence for all three variables, where the degraded velocity and pressure convergence is due to nonconformity of the mesh with the fractures.
\begin{table}[ht!]
    \centering
    \begin{tabular}{c|cc|cc|cc}
    mesh ref. lvl.     & $L^2$-err in $\bld u_h$ & rate
    & $L^2$-err in $p_h^*$ & rate
&     $L^2$-err in $c_h(T)$ & rate\\
\hline
    0     & 3.567e-02 &--& 3.786e-04 &--&1.177e-01 &--\\
1&1.954e-02 & 0.87&1.061e-04 & 1.84&8.587e-02 & 0.45\\
2&1.029e-02 & 0.92&7.146e-06 & 2.00&5.883e-02 & 0.55\\
3&4.881e-03 & 1.08&2.863e-05 & 1.89&3.541e-02 & 0.73\\
\hline
    \end{tabular}
    \vspace{1ex}
    \caption{Benchmark 2 with conductive fractures (fitted mesh): history of convergence for the $L^2$-errors in $\bld u_h$, $p_h^*$, and
    $c_h(T)$ along mesh refinements. Reference solution is obtained on
    the fourth level refined fitted mesh with a small time step size
    $\Delta t = 3.125\times 10^{-5}$.}
    \label{tab:geiger1}
\end{table}

\begin{table}[ht!]
    \centering
    \begin{tabular}{c|cc|cc|cc}
    mesh ref. lvl.     & $L^2$-err in $\bld u_h$ & rate
    & $L^2$-err in $p_h^*$ & rate
&     $L^2$-err in $c_h(T)$ & rate\\
\hline
0 & 1.358e-02 & -- & 2.406e-04 & -- &1.396e-01 &-- \\
1 & 7.098e-03 & 0.94 & 6.402e-05 & 1.91 &1.025e-01 & 0.44 \\
2 & 3.607e-03 & 0.98 & 1.630e-05 & 1.97 &7.149e-02 & 0.52 \\
3 & 1.666e-03 & 1.11 & 3.566e-06 & 2.19 &4.572e-02 & 0.64 \\
\hline
    \end{tabular}
    \vspace{1ex}
    \caption{Benchmark 2 with blocking fractures (fitted mesh): history of convergence for the $L^2$-errors in $\bld u_h$, $p_h^*$, and
    $c_h(T)$ along mesh refinements. Reference solution is obtained on
    the fourth level refined fitted mesh with a small time step size
    $\Delta t = 3.125\times 10^{-5}$.}
    \label{tab:geiger2}
\end{table}

\begin{table}[ht!]
    \centering
    \begin{tabular}{c|cc|cc|cc}
    mesh ref. lvl.     & $L^2$-err in $\bld u_h$ & rate
    & $L^2$-err in $p_h^*$ & rate
&     $L^2$-err in $c_h(T)$ & rate\\
\hline
0 & 7.611e-02 & -- & 8.295e-02 & -- &1.424e-01 &-- \\
1 & 5.357e-02 & 0.51 & 5.890e-02 & 0.49 &1.050e-01 & 0.44 \\
2 & 3.991e-02 & 0.42 & 4.088e-02 & 0.53 &7.418e-02 & 0.50 \\
3 & 2.634e-02 & 0.60 & 2.899e-02 & 0.50 &4.882e-02 & 0.60 \\
\hline
    \end{tabular}
    \vspace{1ex}
    \caption{Benchmark 2 with blocking fractures (unfitted mesh): history of convergence for the $L^2$-errors in $\bld u_h$, $p_h^*$, and
    $c_h(T)$ along mesh refinements. Reference solution is obtained on
    the fourth level refined fitted mesh with a small time step size
    $\Delta t = 3.125\times 10^{-5}$.}
    \label{tab:geiger3}
\end{table}

\subsection{Benchmark 3: Complex Fracture Network (2D)}
This test case considers a small but complex fracture network that includes permeable
and blocking fractures. The domain and boundary conditions are shown in Figure
\ref{fig:complex}.
The exact coordinates for the fracture positions are provided in
\cite[Appendix C]{FLEMISCH2018239}.
The fracture network contains ten straight immersed fractures. The fracture
thickness
is $\epsilon=10^{-4}$ for all fractures, and permeability is
$\mathbb K_c=10^4$ for all fractures except
for fractures 4 and 5 which are blocking fractures with $K_b=10^{-4}$ .
Note that we are considering two subcases a) and b) with a pressure gradient which is predominantly vertical and horizontal respectively.
\begin{figure}[ht]
  \centering
    \includegraphics[width=.86\textwidth]{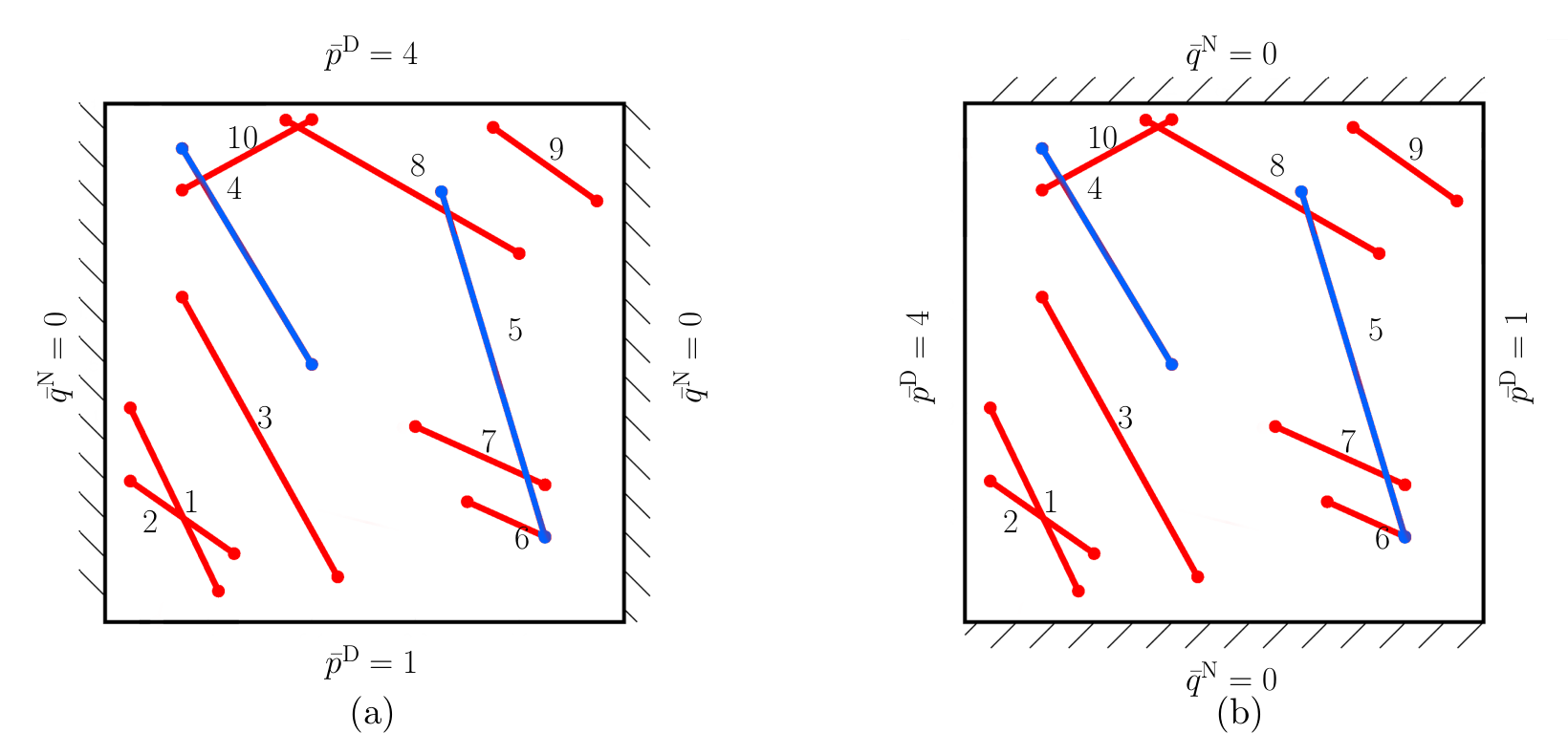}
  \caption{Benchmark 3: computational domain and boundary conditions.}
  \label{fig:complex}
\end{figure}

We apply the method \eqref{fem} on two set of meshes:
a triangular fitted mesh with $1,332$ matrix elements and
$88$ fracture elements which was provided in the git repository
\url{https://git.iws.uni-stuttgart.de/benchmarks/fracture-flow}, see left of Figure \ref{fig:complex0},
and a triangular immersed fitted mesh with $1,370$ matrix elements and
$211$ fracture elements obtained from a background unfitted mesh using the
immersing mesh technique introduced in Section 3.6, see right of Figure \ref{fig:complex0}.
\begin{figure}[ht]
  \centering
    \subfigure[A fitted mesh]{\includegraphics[width = 3.0in]{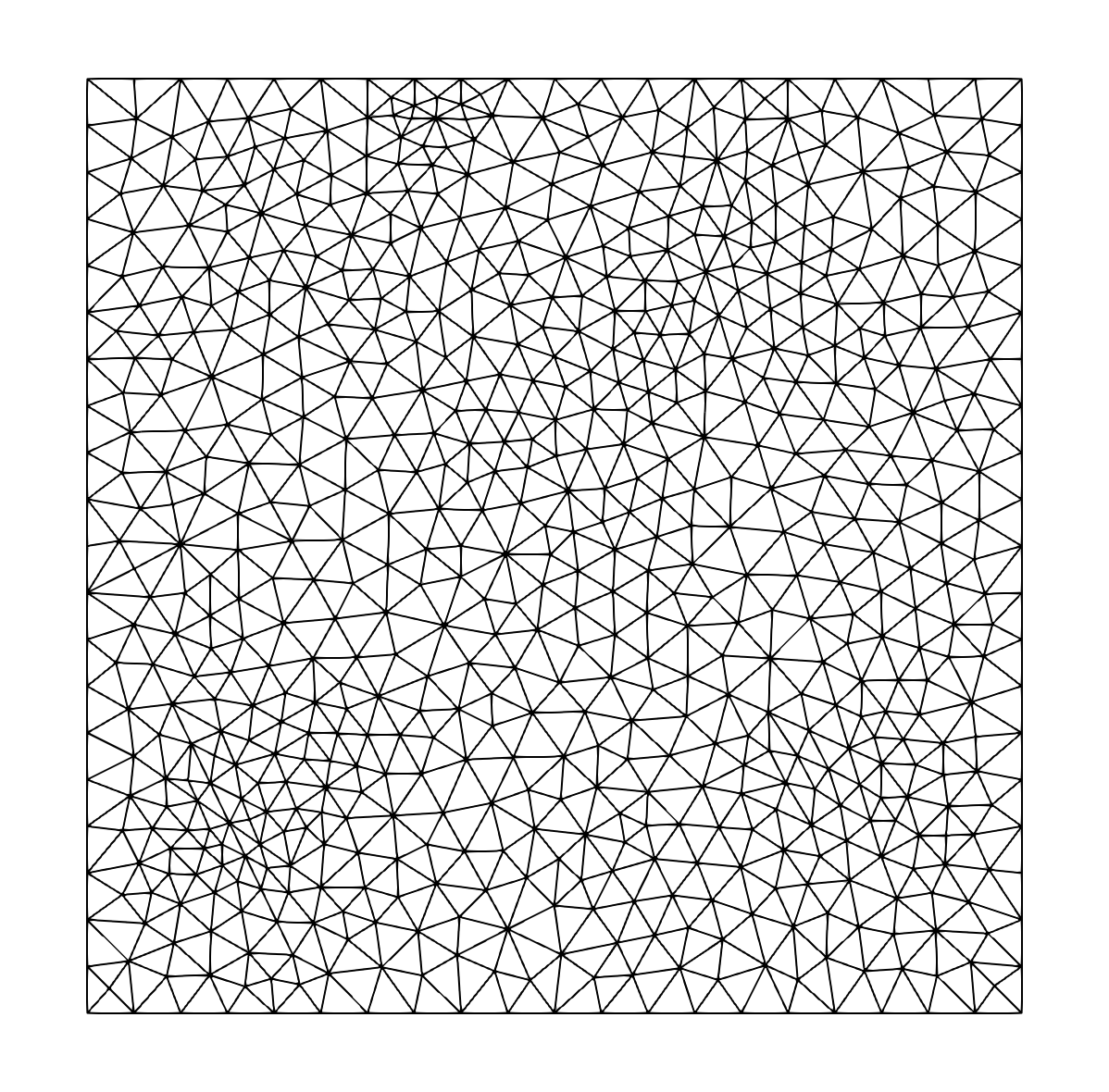}}
      \subfigure[An immersed fitted mesh]{\includegraphics[width = 3.0in]{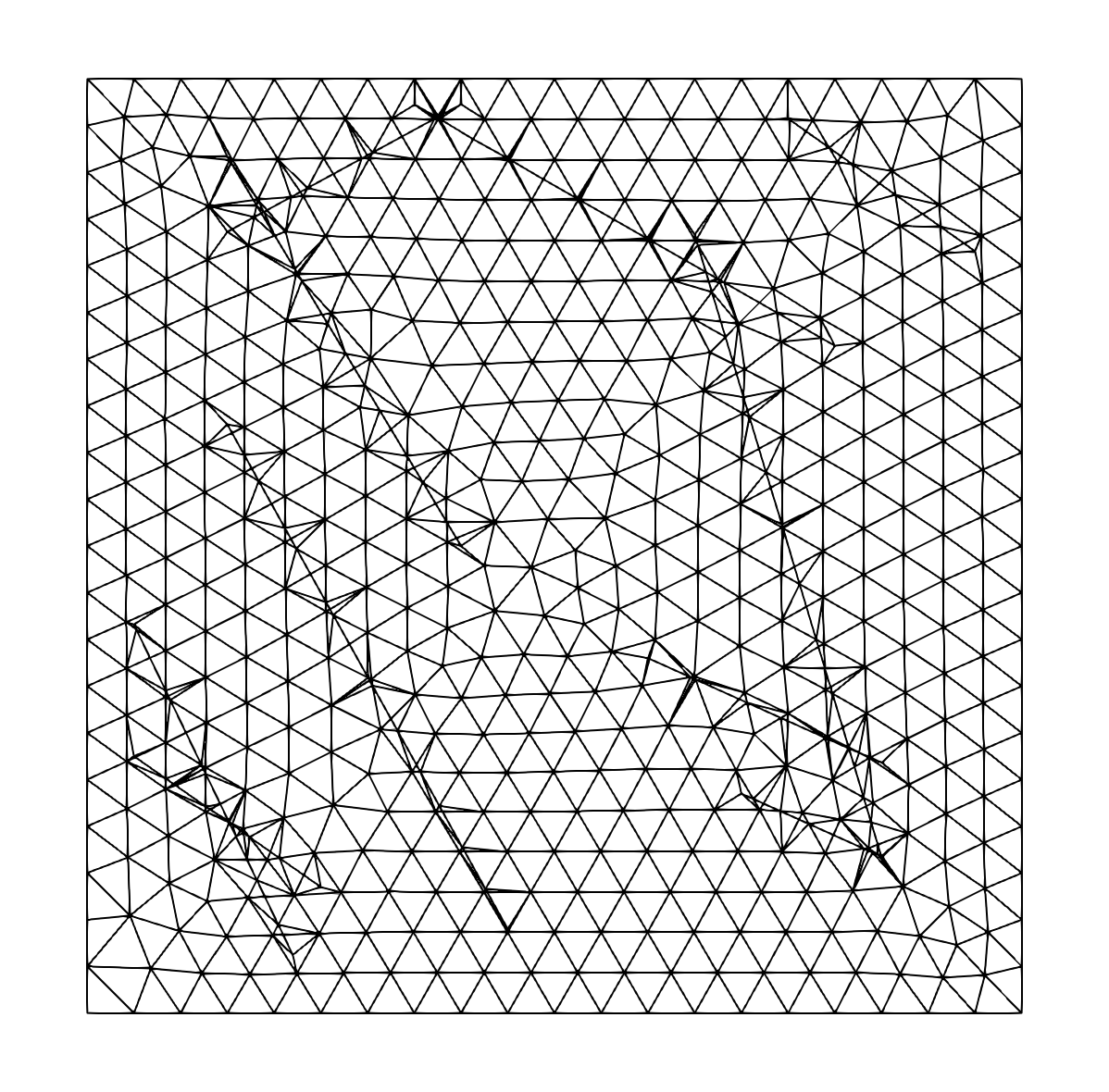}}
  \caption{Benchmark 3: computational meshes.}
  \label{fig:complex0}
\end{figure}
The globally coupled DOFs is $2,066$ for the fitted mesh, and is
$2,211$ for the immersed mesh.
The pressure distributions  along the lines $(0,0.5)$--$(1.0,0.9)$
are  shown in Figure \ref{fig:complex1}.
We observe that the results on the two meshes are very close to each other, and they are in good agreements with the reference data obtained from a mimetic finite difference method on a very fine mesh with $1.8$ million DOFs.

\begin{figure}[ht]
  \centering
  \subfigure[Vertical flow]{\includegraphics[width = 3.0in]{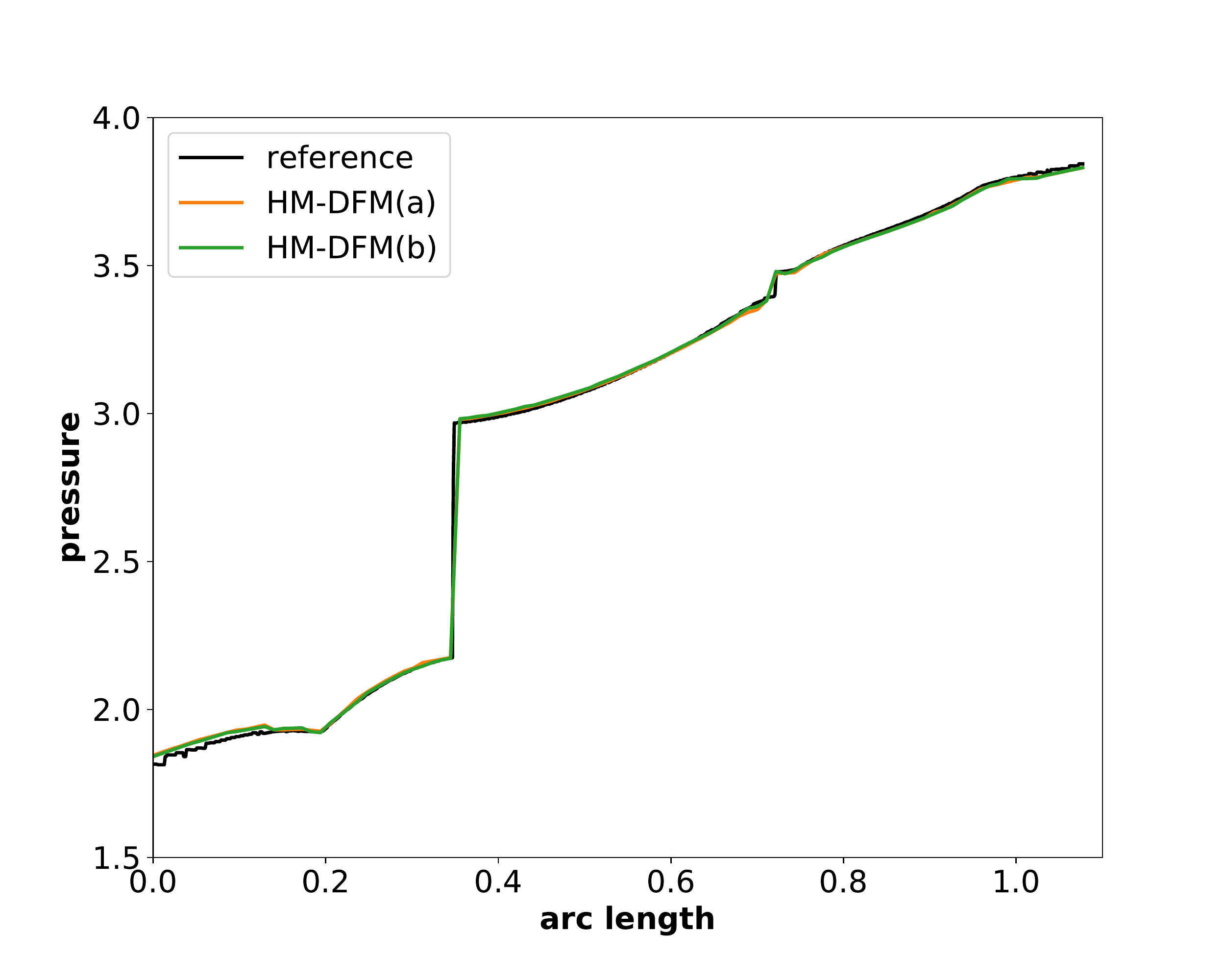}}
\subfigure[Horizontal flow]{\includegraphics[width = 3.0in]{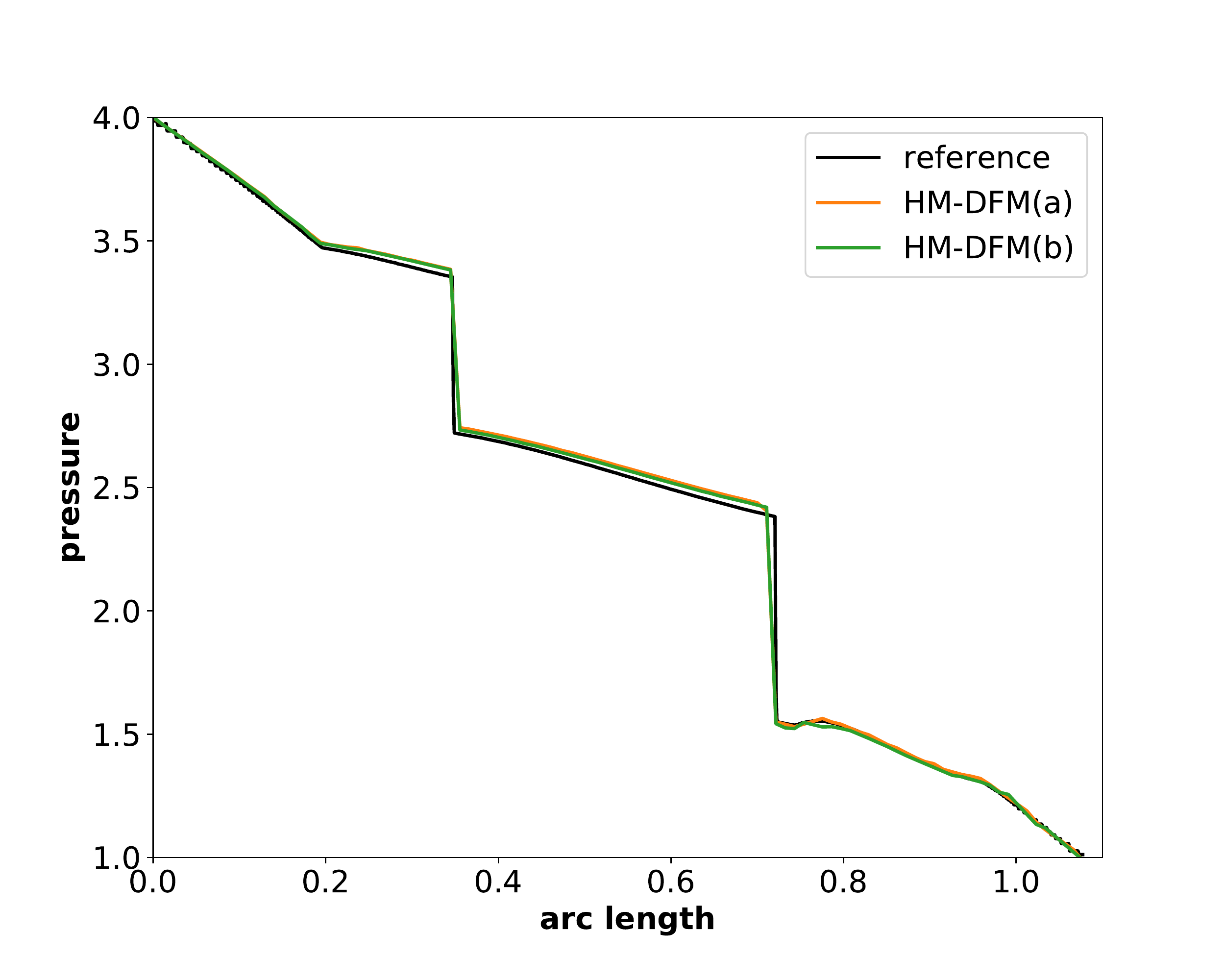}}
  \caption{Benchmark 3: pressure distribution along
  line $(0,0.5)$--$(1-0.9)$.
  HDG-DFM(a) is the numerical solution on the fitted mesh in Figure~\ref{fig:complex0} (a),
    HDG-DFM(b) is the numerical solution on the immersed fitted mesh in Figure~\ref{fig:complex0} (b).
  }
  \label{fig:complex1}
\end{figure}

\subsection{Benchmark 4: a Realistic Case (2D)}
We consider a real set of fractures from an interpreted outcrop in
the Sotra island, near Bergen in Norway.
The size of the domain is 700 $m$ $\times$ 600 $m$ with uniform scalar
permeability $\mathbb K_m=10^{-14}m^2$. The set of fractures is composed of
64 line segments, in which the permeability is $\mathbb K_c=10^{-8}m^2$.
The fracture thickness is $\epsilon=10^{-2}m$. The exact coordinates for the
fracture positions are provided in the above mentioned git repository.
The domain along with boundary conditions is given in  Figure~\ref{fig:real}.
Similar to the previous example, we apply the method \eqref{fem}
on two set of conforming meshes:
a fitted mesh consists of $10,807$ matrix elements and
$1,047$ fracture elements provided in \url{https://git.iws.uni-stuttgart.de/benchmarks/fracture-flow}, see left of Figure~\ref{fig:realX}, and an immersed
fitted mesh consists of $5,473$ matrix elements and
$1541$ fracture elements obtained from a background unfitted mesh using the immersing mesh technique introduced in Section 3.6, see right of Figure~\ref{fig:realX}.
The number of the globally coupled DOFs is $17,253$ for the fitted mesh (a), and $9,753$ for the immersed mesh (b).
\begin{figure}[ht]
  \centering
    \includegraphics[width=.6\textwidth]{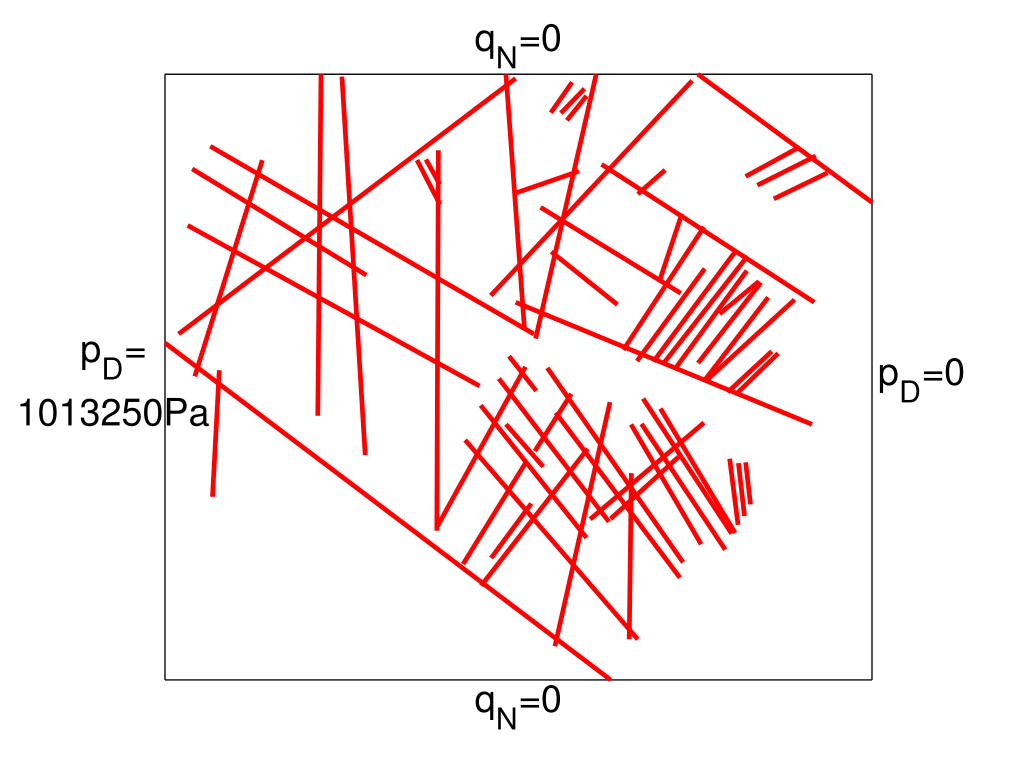}
  \caption{Benchmark 4: Computational domain and boundary conditions.
  }
  \label{fig:real}
\end{figure}

\begin{figure}[ht]
  \centering
    \subfigure[A fitted mesh]{\includegraphics[width = 3.0in]{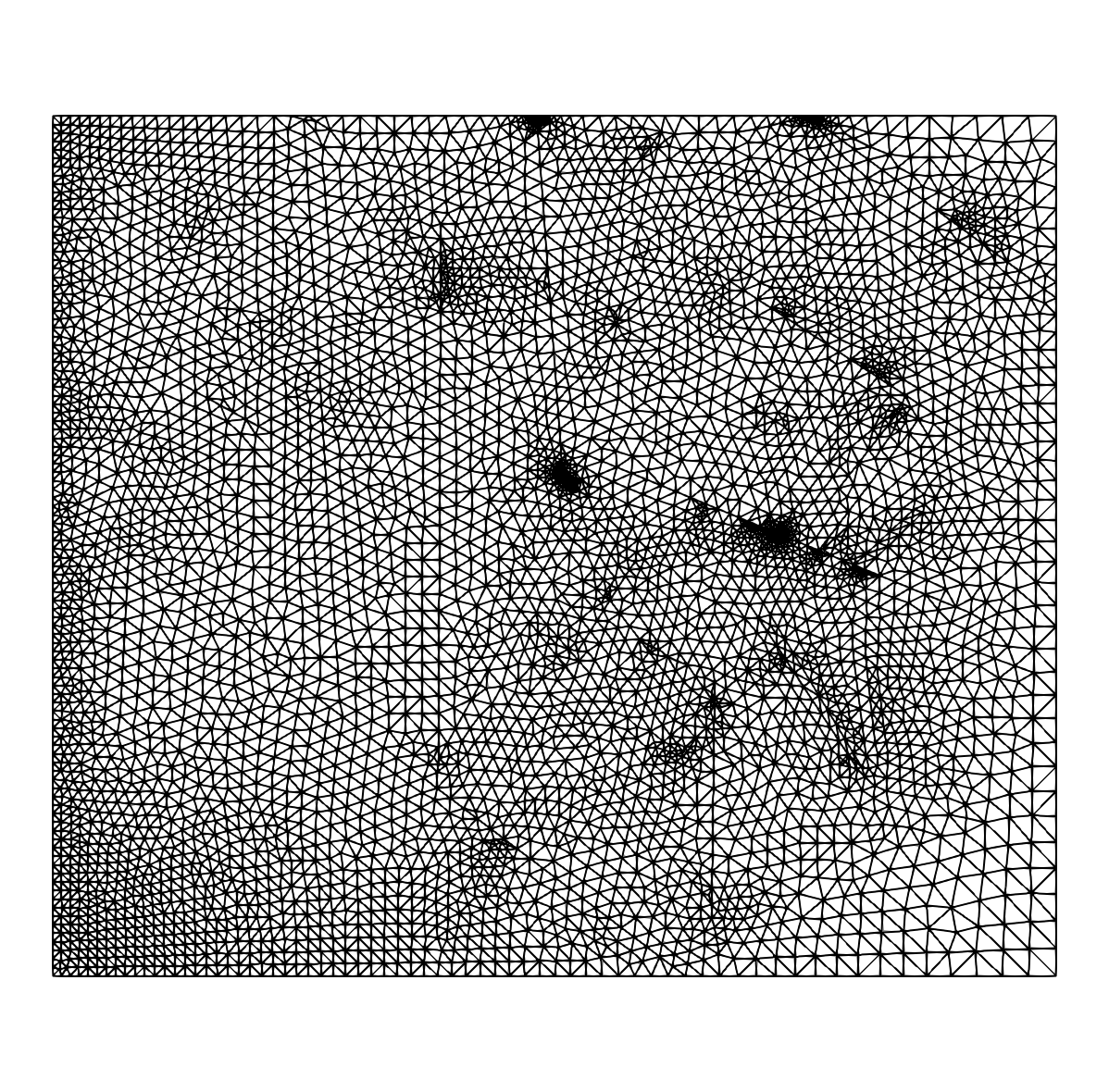}}
      \subfigure[An immersed fitted mesh]{\includegraphics[width = 3.0in]{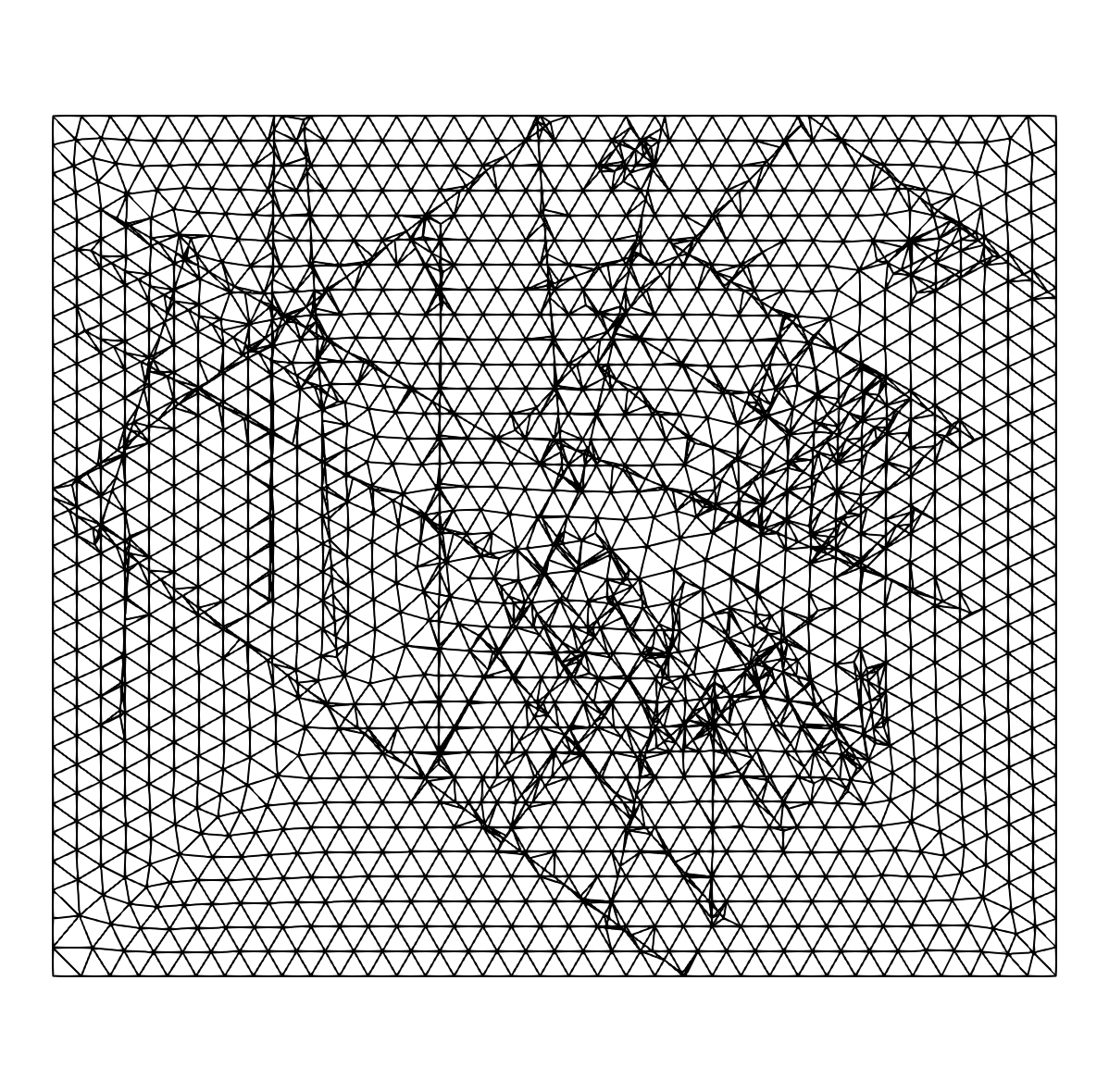}}
  \caption{Benchmark 4: computational meshes.}
  \label{fig:realX}
\end{figure}

The pressure distribution  along the two lines $y=500 m$
and $x=625 m$
are  shown in Figure \ref{fig:real1}, along with the results
for the mortar-DFM method with $25,258$ DOFs from \cite{FLEMISCH2018239}.
We observe that the three results are in good agreements with each other, with the HDG-DFM(b) using the least amount of DOFs.
\begin{figure}[ht]
  \centering
  \begin{tabular}{cc}
    \includegraphics[width=.43\textwidth]{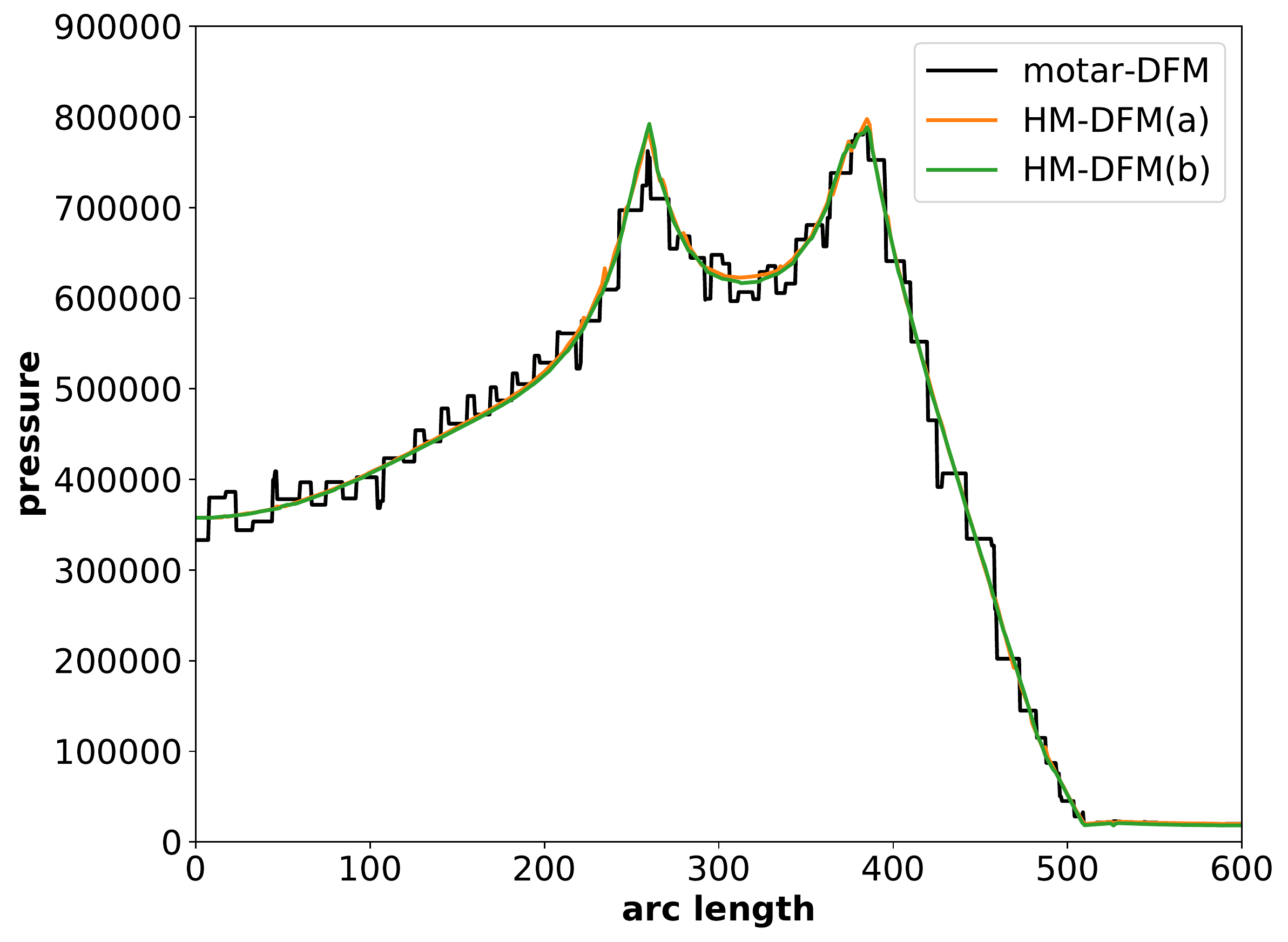}&
    \includegraphics[width=.43\textwidth]{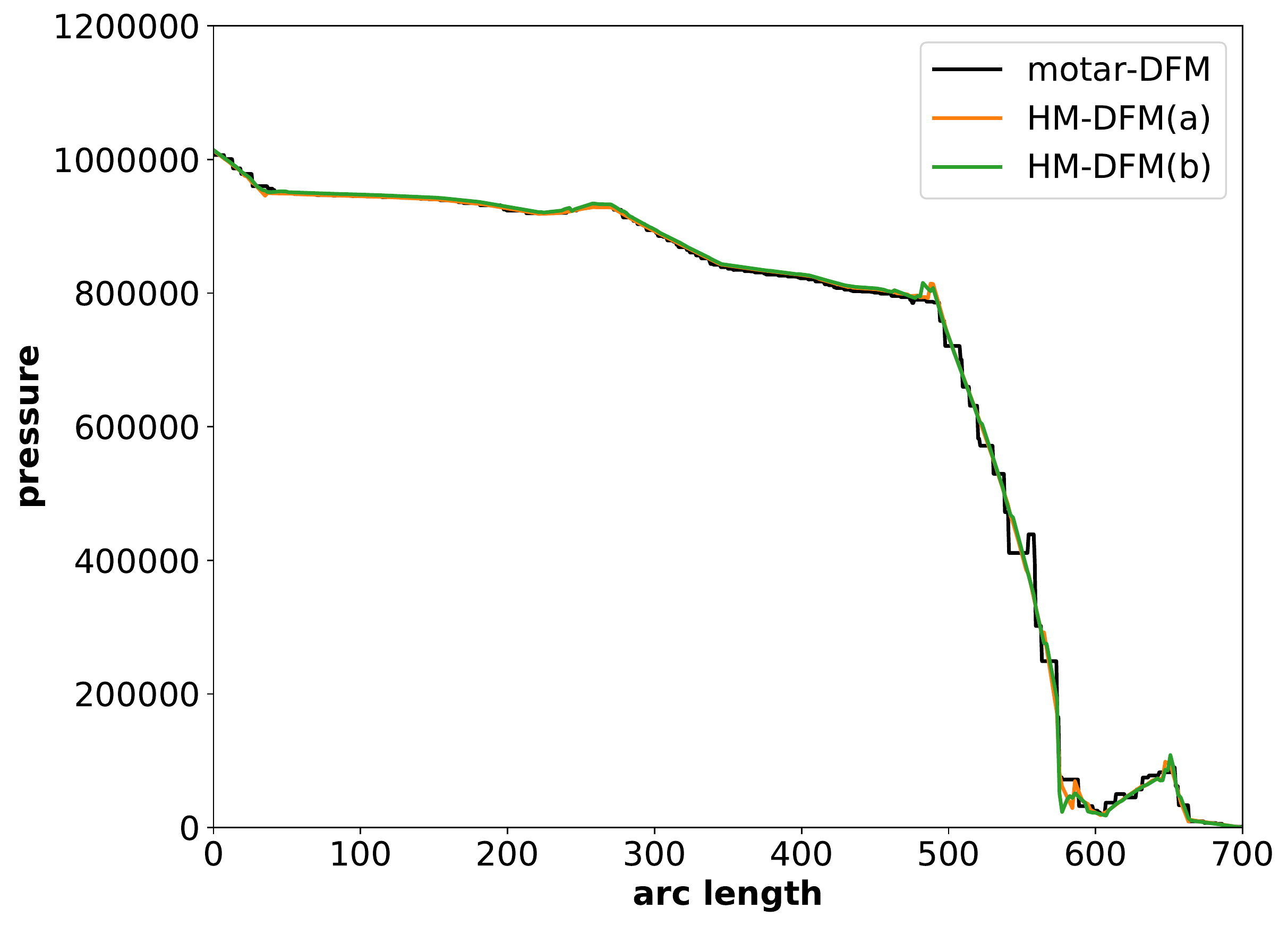}\\
    (a) $y=500 m$
                                                &(b) $x=625 m$
  \end{tabular}
  \caption{Benchmark 4: Pressure distribution along lines $y=500 m$ (left) and
  $x=625 m$ (right).
  HDG-DFM(a) is the numerical solution on the fitted mesh in Figure~\ref{fig:realX}(a), HDG-DFM(b) is the numerical solution on the immersed fitted mesh in Figure~\ref{fig:realX}(b).
  }
  \label{fig:real1}
\end{figure}

\subsection{Benchmark 5: Single Fracture (3D)}
This is the first benchmark case proposed in \cite{Berre_2021}.
To be consistent with the notation in \cite{Berre_2021},
the pressure and permeabilities are renamed as hydraulic head and hydraulic
conductivities, respectively for this test case and the three examples following.
Figure \ref{fig:single} illustrates the geometrical description.
Here the domain $\Omega$ is a cube-shaped region $(0\mathrm{m}, 100\mathrm{m})\times (0\mathrm{m},
100\mathrm{m})\times (0\mathrm{m}, 100\mathrm{m})$ which is crossed by a conductive planar fracture,
$\Omega_2$, with a thickness
of  $\epsilon=10^{-2}m$.
The matrix domain consists of subdomains $\Omega_{3,1}$, above the fracture, and
$\Omega_{3,2}$ and $\Omega_{3,3}$ below. The subdomain $\Omega_{3,3}$
represents a heterogeneity within the rock matrix.
The matrix conductivities are given in Figure \ref{fig:single}, and the
fracture conductivity is $\mathbb{K}_c = 0.1$ so that
$\epsilon \mathbb K_c= 10^{-3}$.
Inflow into the system occurs through a narrow band defined by $\{0 m\}\times(0 m,
100 m)\times(90 m, 100 m)$.
Similarly, the outlet is a narrow band defined by $(0 m, 100 m) \times \{0 m\} \times (0 m,
10 m).$
At the inlet and outlet bands, we impose the hydraulic head
$h_{in}=4 m$ and $h_{out}=1 m$ respectively. The remaining parts of the boundary
are assigned no-flow conditions.
Following the setup in \cite{Berre_2021},
we set $c_B=0.01 m^{-3}$ at the inlet boundary
for the transport problem. The matrix porosity $\phi$ is taken to be $0.2$ on $\Omega_{3,1}\cup \Omega_{3,2}$ and
$0.25$ on $\Omega_{3,3}$, and the fracture porosity $\phi_c$ is taken to be $0.4$.
The final time of simulation is $T=10^9 s$, and the time step size is
$\Delta t = 10^7 s$.
\begin{figure}[ht]
  \centering
    \includegraphics[width=.7\textwidth]{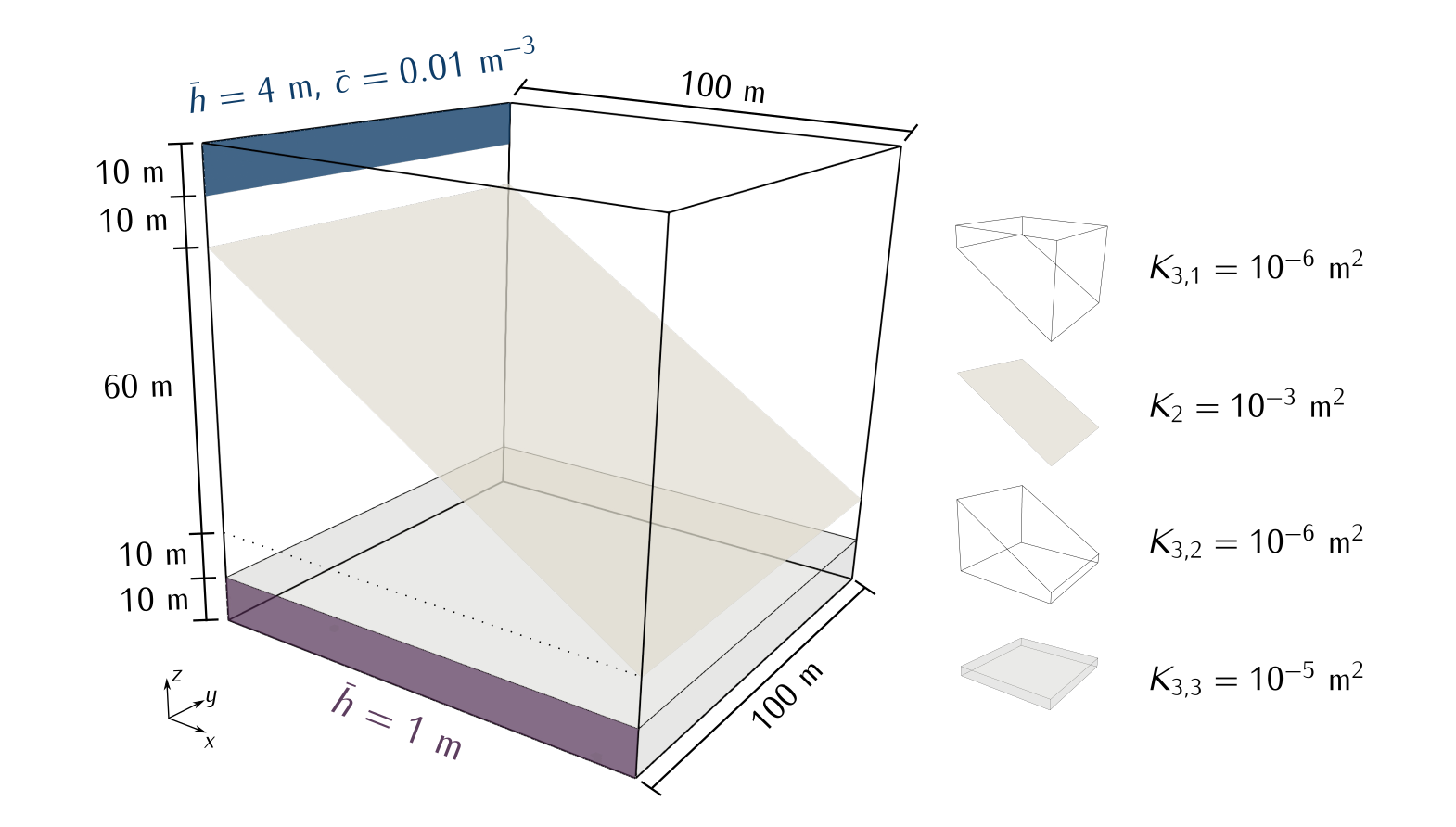}
    \caption{Benchmark 5: Conceptual model and geometrical description
    of the domain.}
    \label{fig:single}
  \end{figure}

  We perform the method \eqref{fem} and \eqref{transport-eq} on a coarse tetrahedral mesh with
  $10,232$ matrix elements and $448$ fracture elements and a
  fine tetrahedral mesh with $111,795$ matrix elements and
  $1,758$ fracture elements.
  The number of the globally coupled DOFs on the coarse mesh is $23,377$,
  while that on the fine mesh is $235,619$.
The hydraulic head along the line $(0\mathrm{m}, 100\mathrm{m}, 100\mathrm{m})$--$(100\mathrm{m}, 0\mathrm{m}, 0\mathrm{m})$
is shown in Figure \ref{fig:single1}, along with
reference data and published spread provided in the git repository
\url{https://git.iws.uni-stuttgart.de/benchmarks/fracture-flow-3d}.
The reference data in Figure \ref{fig:single1} is obtained from
the {\sf USTUTT-MPFA} method on a mesh with approximately
1 million matrix elements,
while the shaded region depicts the area between the 10th and the 90th
percentile of the published results in \cite{Berre_2021}
on mesh refinement level 1 (left, $\sim 10k$ cells) and refinement level
2 (right, $\sim 100k$ cells).
The match number results from evaluating at 100 evenly distributed evaluation
points if the value for the {\sf HM-DFM} method
is between the respective lower and upper value.
We observe that our result agrees with the reference
values quite well, especially on the fine mesh.
\begin{figure}[ht]
  \centering
  \begin{tabular}{cc}
    \includegraphics[width=0.48\textwidth]{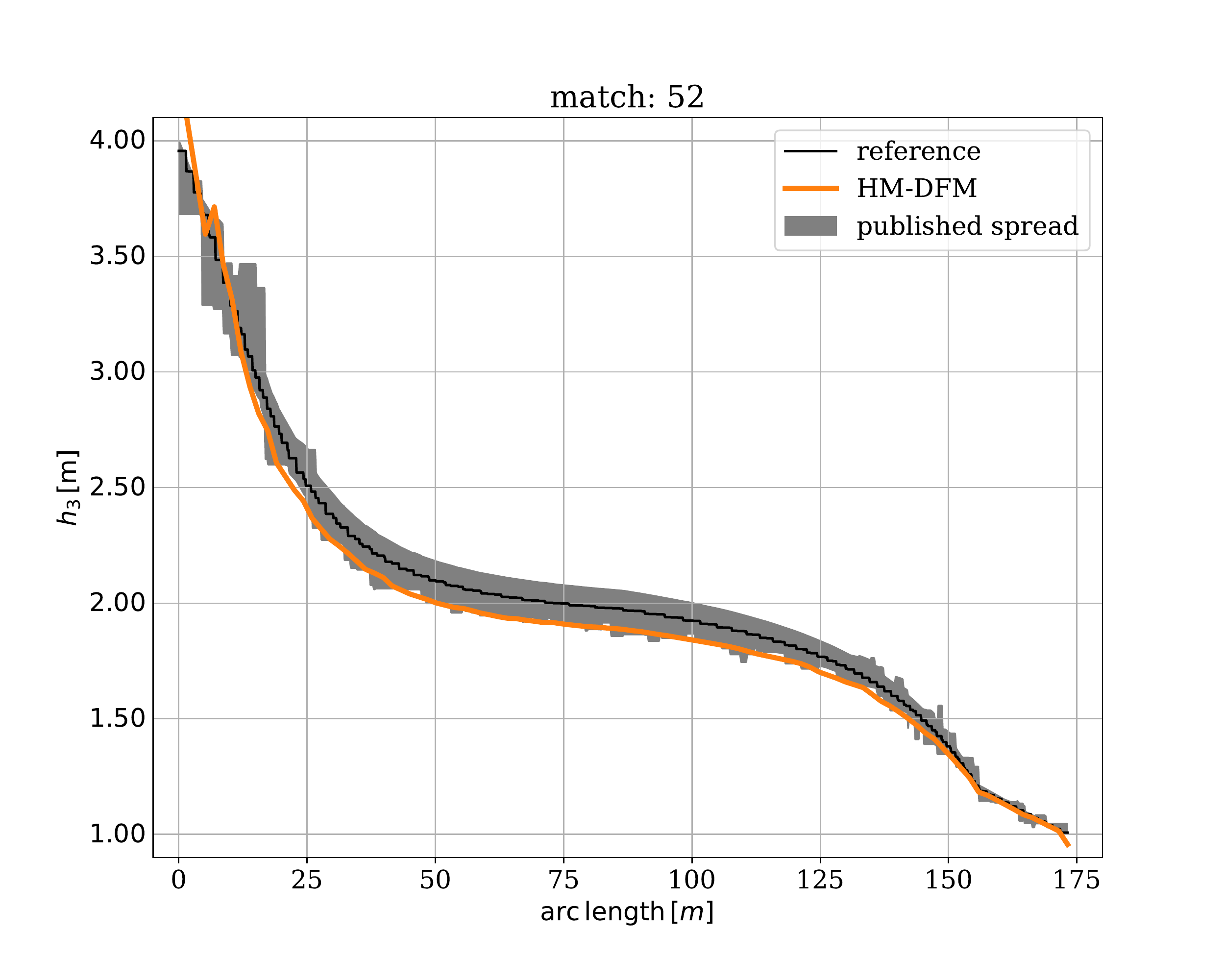}&
    \includegraphics[width=.48\textwidth]{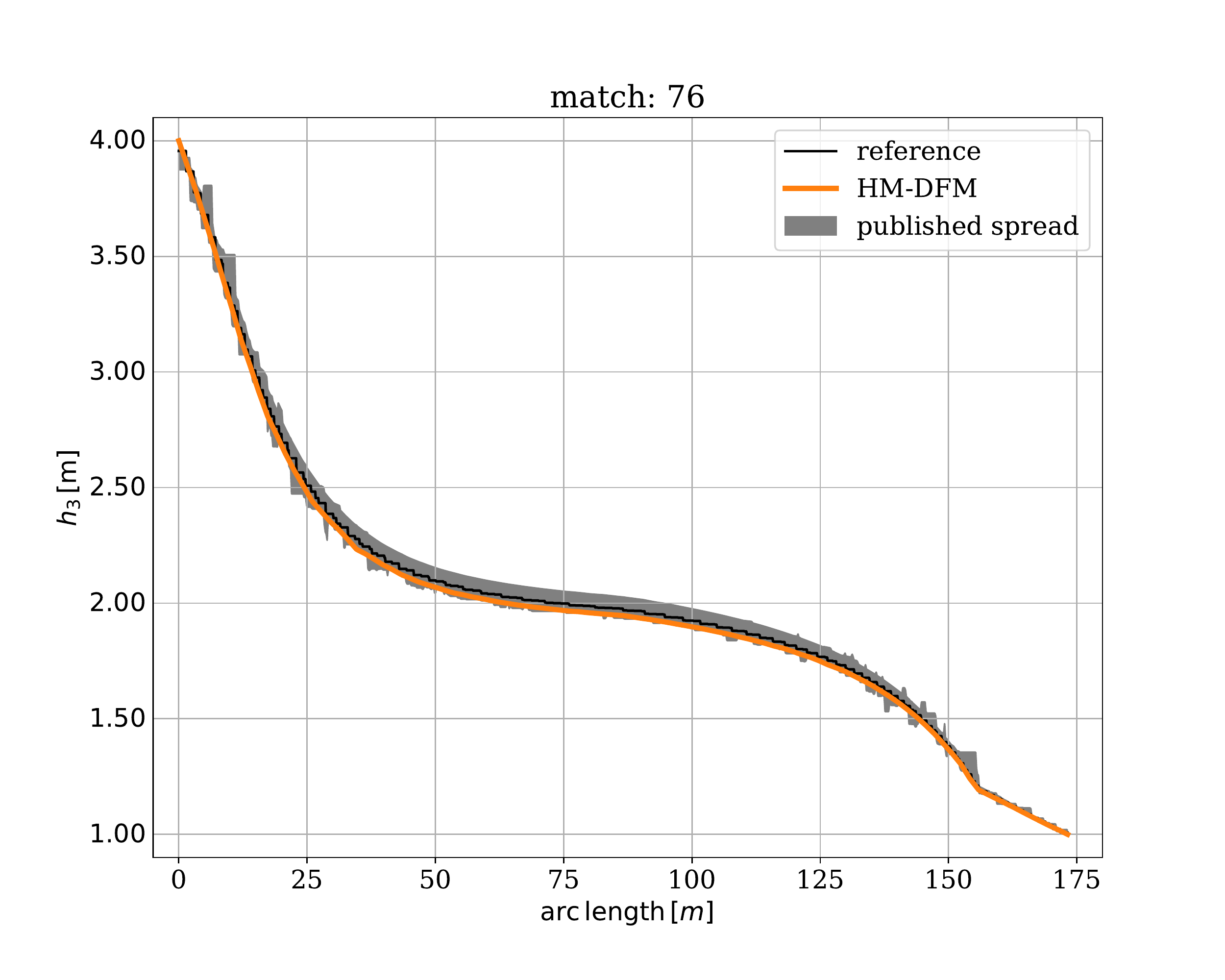}\\
    (a) $\sim 10k$ cells. &
    (b) $\sim 100k$ cells.
  \end{tabular}
    \caption{Benchmark 5: Hydraulic head in the matrix over the line
    $(0\mathrm{m}, 100\mathrm{m}, 100\mathrm{m})$--$(100\mathrm{m}, 0\mathrm{m}, 0\mathrm{m})$.
Left: results on a coarse mesh with about $10k$ cells.
Right: results on a fine mesh with about $100k$ cells.
  }
    \label{fig:single1}
  \end{figure}

Moreover, we plot the
matrix concentration along the line $(0\mathrm{m}, 100\mathrm{m}, 100\mathrm{m})$--$(100\mathrm{m}, 0\mathrm{m}, 0\mathrm{m})$
 in Figure \ref{fig:single1X}, and the
fracture concentration along the line $(0\mathrm{m}, 100\mathrm{m}, 80\mathrm{m})$--$(100\mathrm{m}, 0\mathrm{m}, 20\mathrm{m})$
 at final time $T=10^9 s$ in Figure \ref{fig:single1Y}, together with
 the published spread provided in the git repository,
which depicts the area between the 10th and the 90th percentile of the published results in \cite{Berre_2021} using
similar first order finite volume schemes with implicit Euler time stepping and $\Delta t =10^7 s$.
We observe that our results agree quite well with the provided data.
 \begin{figure}[ht]
  \centering
  \begin{tabular}{cc}
    \includegraphics[width=0.48\textwidth]{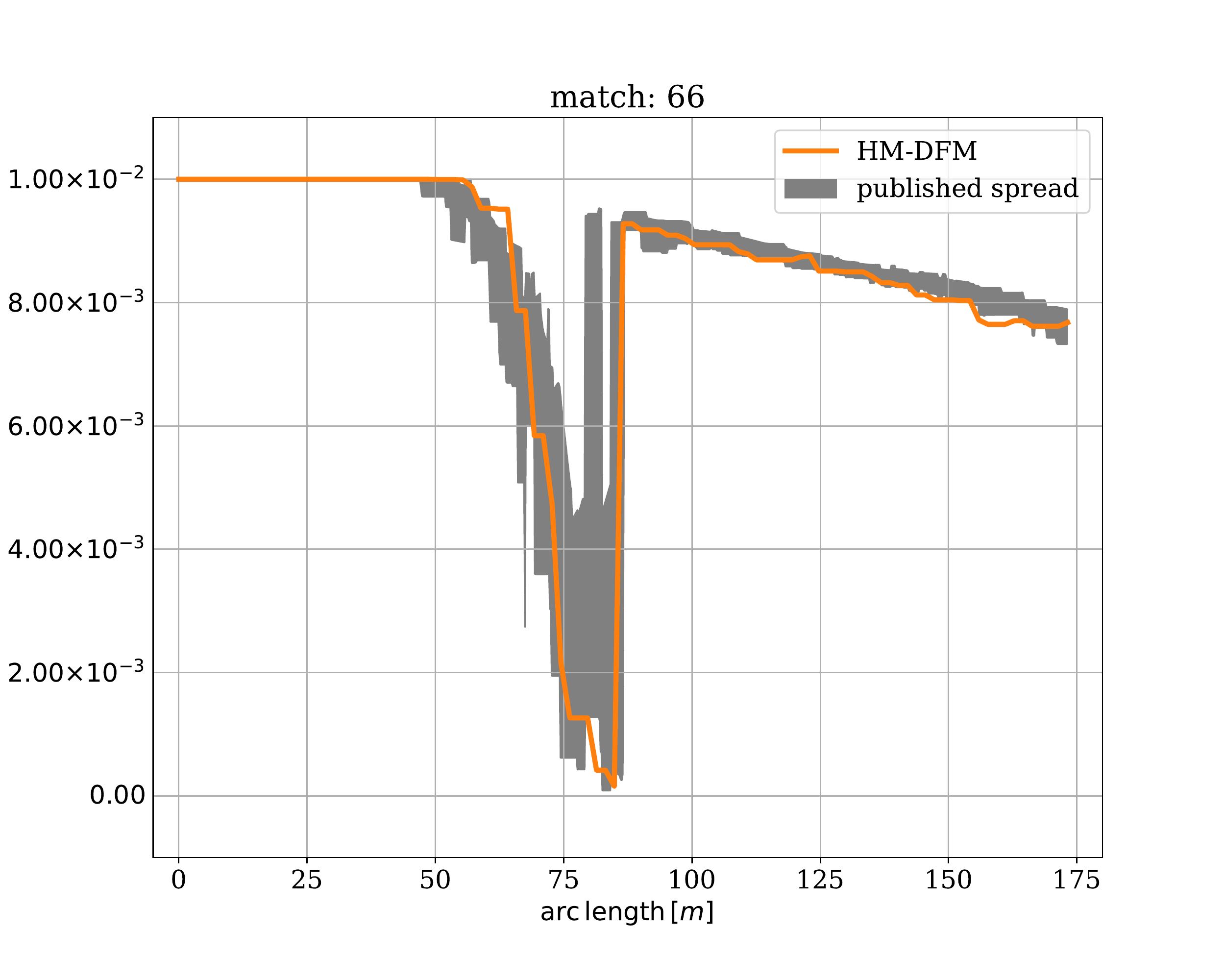}&
    \includegraphics[width=.48\textwidth]{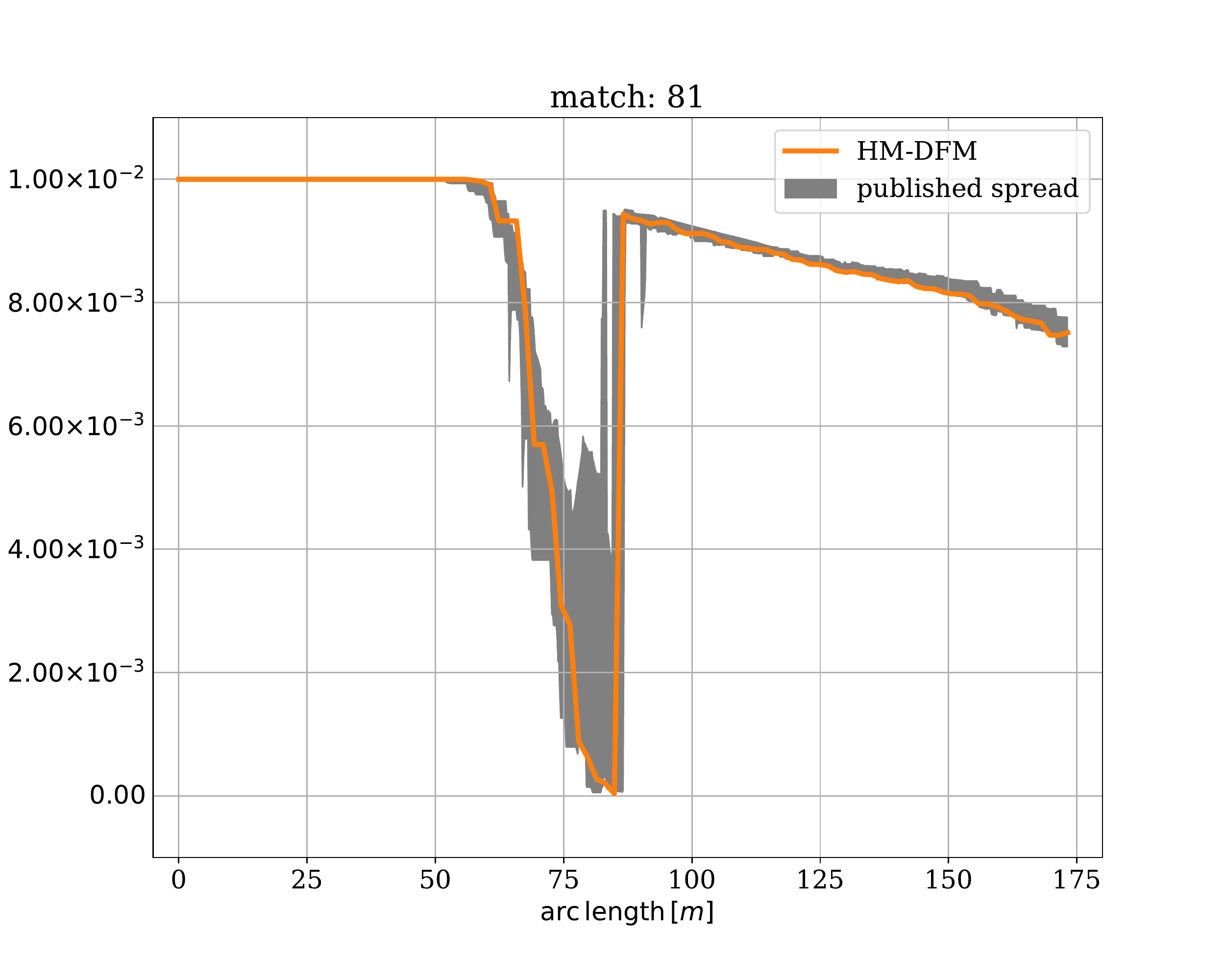}\\
    (a) $\sim 10k$ cells. &
    (b) $\sim 100k$ cells.
  \end{tabular}
    \caption{Benchmark 5: Hydraulic head in the matrix over the line
    $(0\mathrm{m}, 100\mathrm{m}, 100\mathrm{m})$--$(100\mathrm{m}, 0\mathrm{m}, 0\mathrm{m})$.
Left: results on a coarse mesh with about $10k$ cells.
Right: results on a fine mesh with about $100k$ cells.
  }
    \label{fig:single1X}
  \end{figure}
  \begin{figure}[ht]
  \centering
  \begin{tabular}{cc}
    \includegraphics[width=0.48\textwidth]{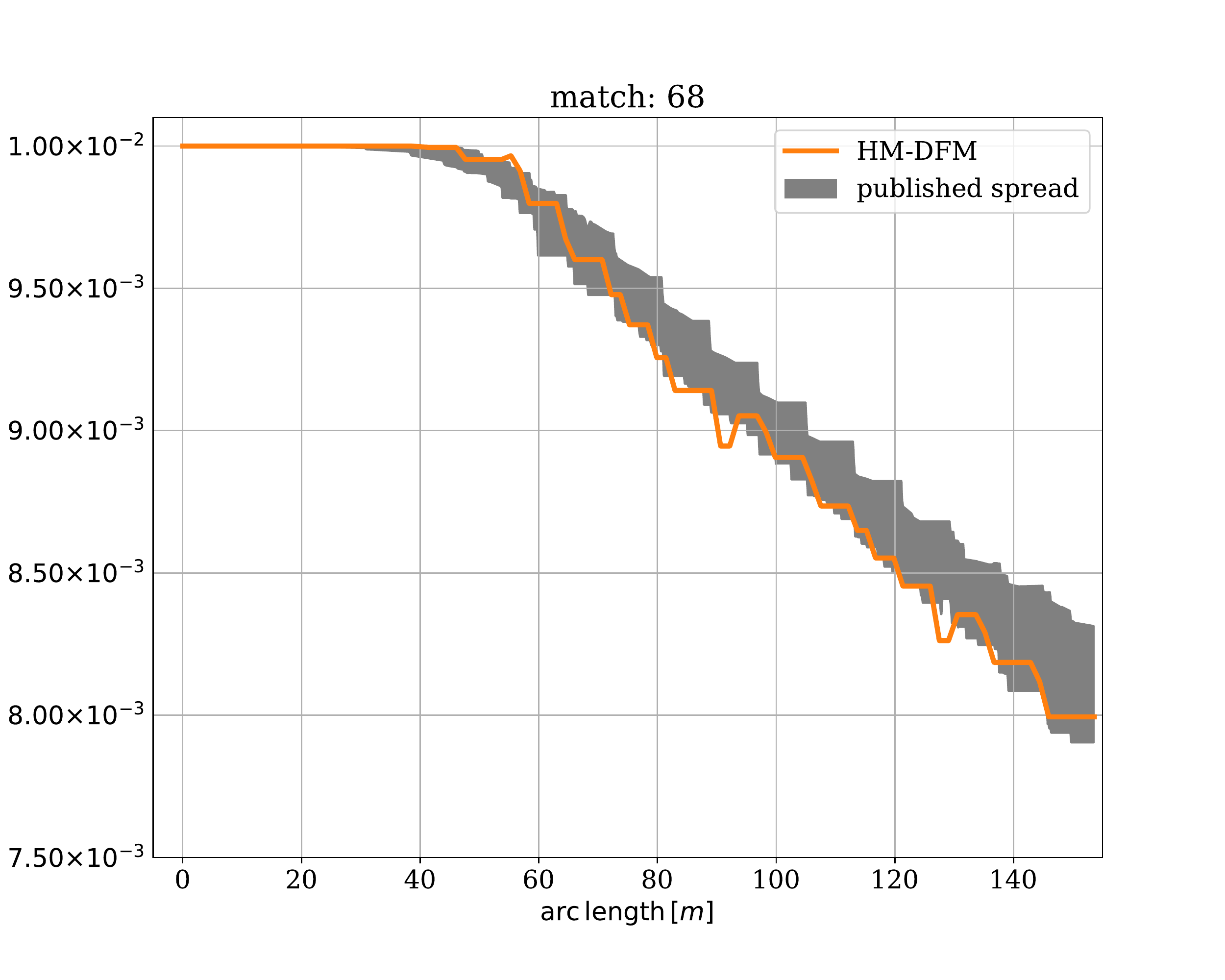}&
    \includegraphics[width=.48\textwidth]{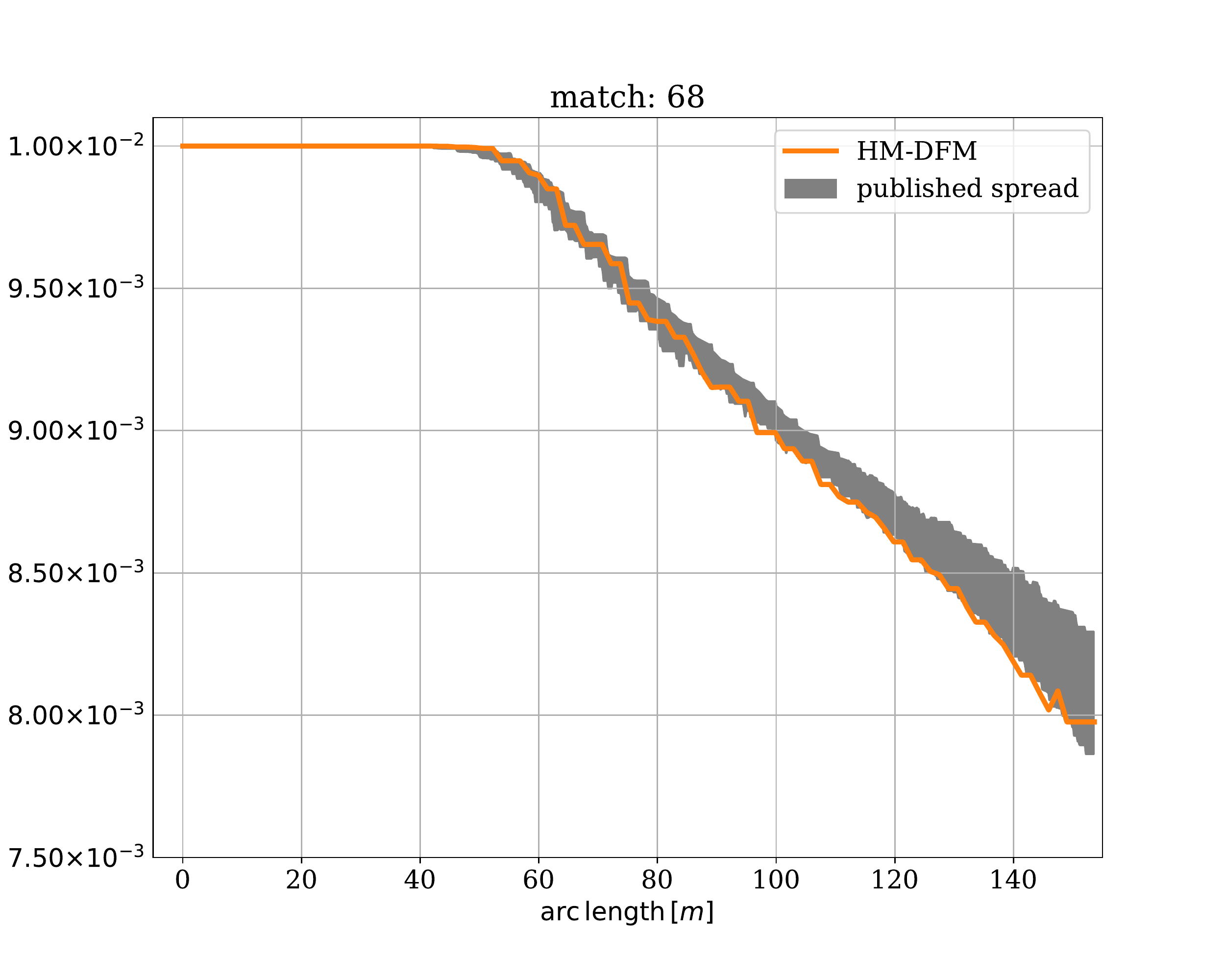}\\
    (a) $\sim 10k$ cells. &
    (b) $\sim 100k$ cells.
  \end{tabular}
    \caption{Benchmark 5: Fracture concentration over the line
    $(0\mathrm{m}, 100\mathrm{m}, 80\mathrm{m})$--$(100\mathrm{m}, 0\mathrm{m}, 20\mathrm{m})$.
Left: results on a coarse mesh with about $10k$ cells.
Right: results on a fine mesh with about $100k$ cells.
  }
    \label{fig:single1Y}
  \end{figure}

\subsection{Benchmark 6: Regular Fracture Network (3D)}
This is the second benchmark case proposed in \cite{Berre_2021}, which is a 3D analog of Benchmark 2.
The domain is given by the unit cube $\Omega=(0\mathrm{m}, 1\mathrm{m})^3$ and contains
9 regularly oriented fractures, as illustrated in
Figure \ref{fig:regular3D}.
Dirichlet boundary condition $p=\bar h =1 \mathrm{m}$
is imposed on the boundary
$\Gamma_D = \{(x,y,z)\in \partial \Omega: x, y, z > 0.875\mathrm{m}\}$,
Neumann boundary condition
$\bld u\cdot\bld n = -1\mathrm{m/s}$ is imposed on the boundary
$\partial\Omega_{in} = \{(x,y,z)\in \partial \Omega: x, y, z < 0.25\mathrm{m}\}$,
and no-flow boundary condition is imposed on the remaining boundaries.
The heterogeneous matrix conductivity is illustrated in Figure \ref{fig:regular3D}, and the
fracture conductivity is either $\mathbb{K}_c = 10^4 \mathrm{m^2}$, which represents
a conductive fracture or
$K_b=10^{-4}\mathrm{m^2}$ which represents a blocking fracture. The fracture thickness is
$\epsilon = 10^{-4}\mathrm{m}$.
For the transport equation,
matrix porosity is taken to be $\phi=0.1$,
conductive fracture concentration is $\phi_c=0.9$, and
the inflow boundary condition $c_B=1 m^{-3}$ is set on the inlet boundary $\partial\Omega_{in}$.
Final time of the simulation is $T=0.25 s$.

  \begin{figure}[ht]
  \centering
    \includegraphics[width=.7\textwidth]{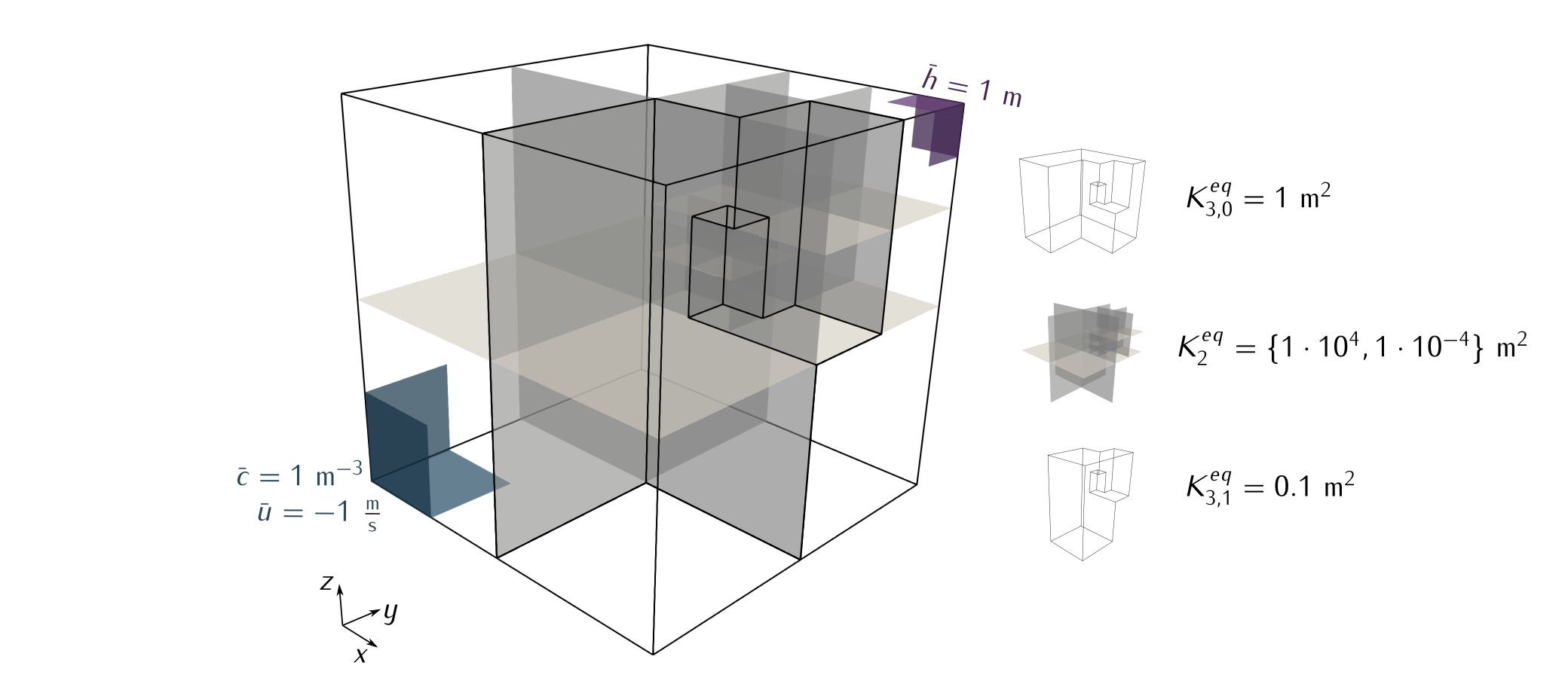}
    \caption{Benchmark 6: Conceptual model and geometrical description
    of the domain.}
    \label{fig:regular3D}
  \end{figure}

  We perform the method \eqref{fem} 
  on a coarse {\it fitted} tetrahedral mesh with
  $4,375$ matrix elements and $944$ fracture elements and a
  fine tetrahedral mesh with $36,336$ matrix elements and
  $4,524$ fracture elements.
  The number of the globally coupled DOFs on the coarse mesh is $13,373$ for the conductive
  fracture case and $8,334$ for the blocking fracture case (only DOFs for
  $\widehat p_h$ are global DOFs in this case),
  while that on the fine mesh is $94,738$ for the conductive fracture case and
  $70,881$ for the blocking fracture case.
The hydraulic head along the diagonal line $(0\mathrm{m}, 0\mathrm{m},
0\mathrm{m})$--$(1\mathrm{m}, 1\mathrm{m}, 1\mathrm{m})$
is shown in Figure \ref{fig:regular3DC} for the conductive fracture case and in Figure \ref{fig:regular3DB} for the blocking fracture case.
We observe that our results agree with the reference values very well,
which were obtained from the {\sf USTUTT-MPFA} method on a
mesh with approximately 1 million matrix elements.
The small derivation of our result on the left panel of Figure~\ref{fig:regular3DC} with the reference data is acceptable due to the use of a very coarse mesh.

\begin{figure}[ht]
  \centering
  \begin{tabular}{cc}
    \includegraphics[width=0.48\textwidth]{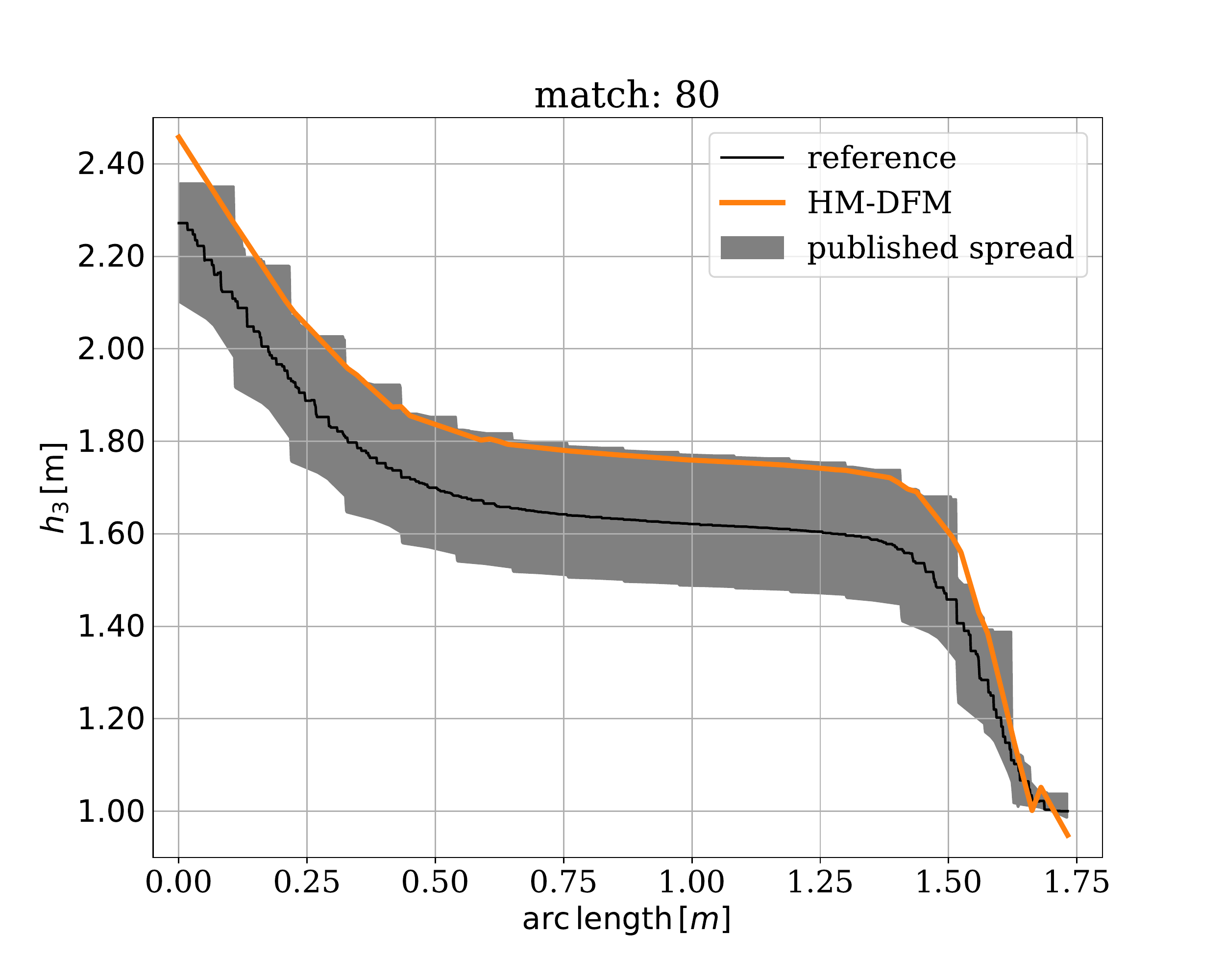}&
    \includegraphics[width=.48\textwidth]{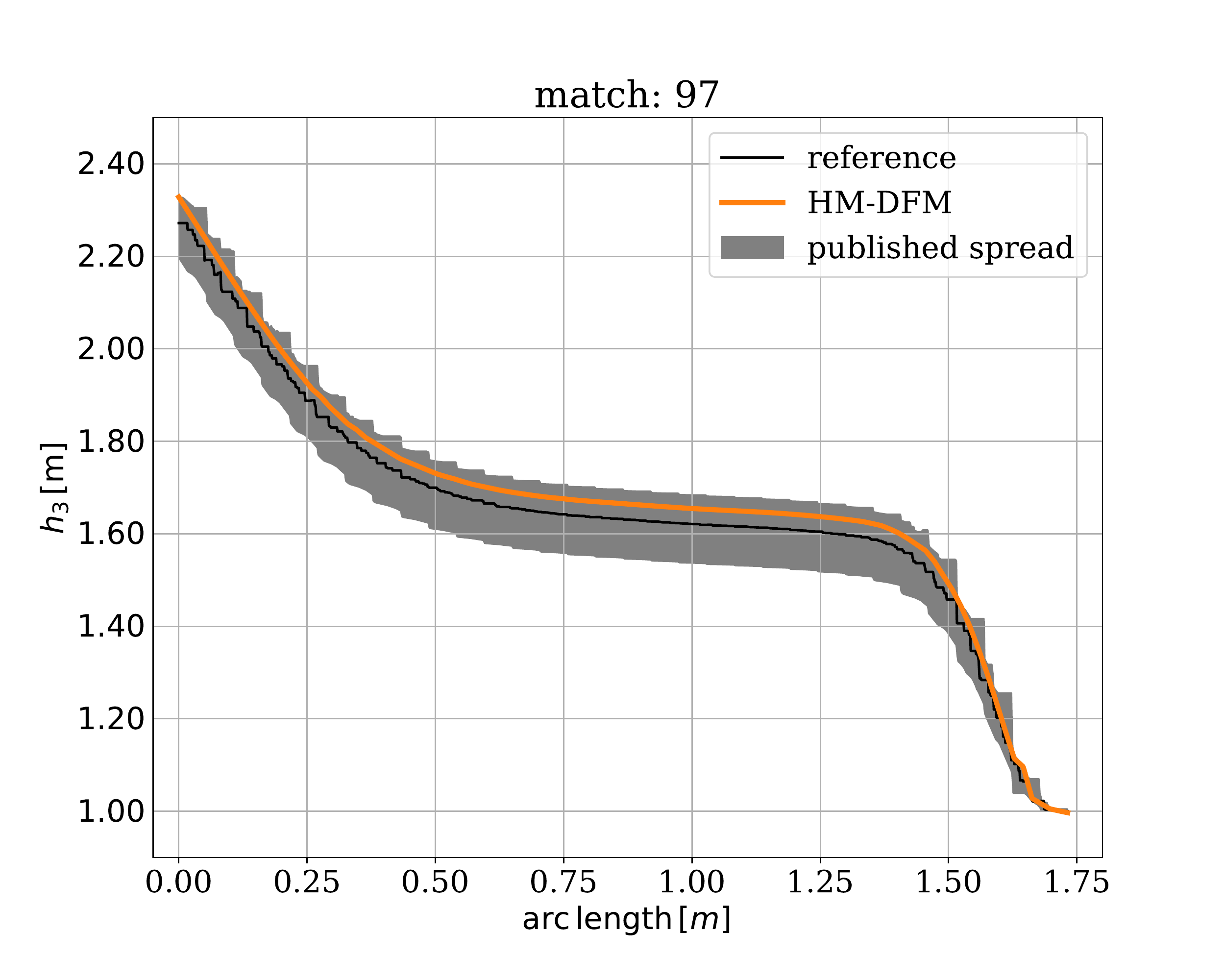}\\
    (a) $\sim 4k$ cells. &
    (b) $\sim 36k$ cells.
  \end{tabular}
    \caption{Benchmark 6 (conductive fractures): Hydraulic head in the matrix over the line
    $(0\mathrm{m}, 0\mathrm{m}, 0\mathrm{m})$--$(1\mathrm{m}, 1\mathrm{m}, 1\mathrm{m})$.
Left: results on a coarse mesh with about $4k$ cells.
Right: results on a fine mesh with about $36k$ cells.
  }
    \label{fig:regular3DC}
  \end{figure}

  \begin{figure}[ht]
  \centering
  \begin{tabular}{cc}
    \includegraphics[width=0.48\textwidth]{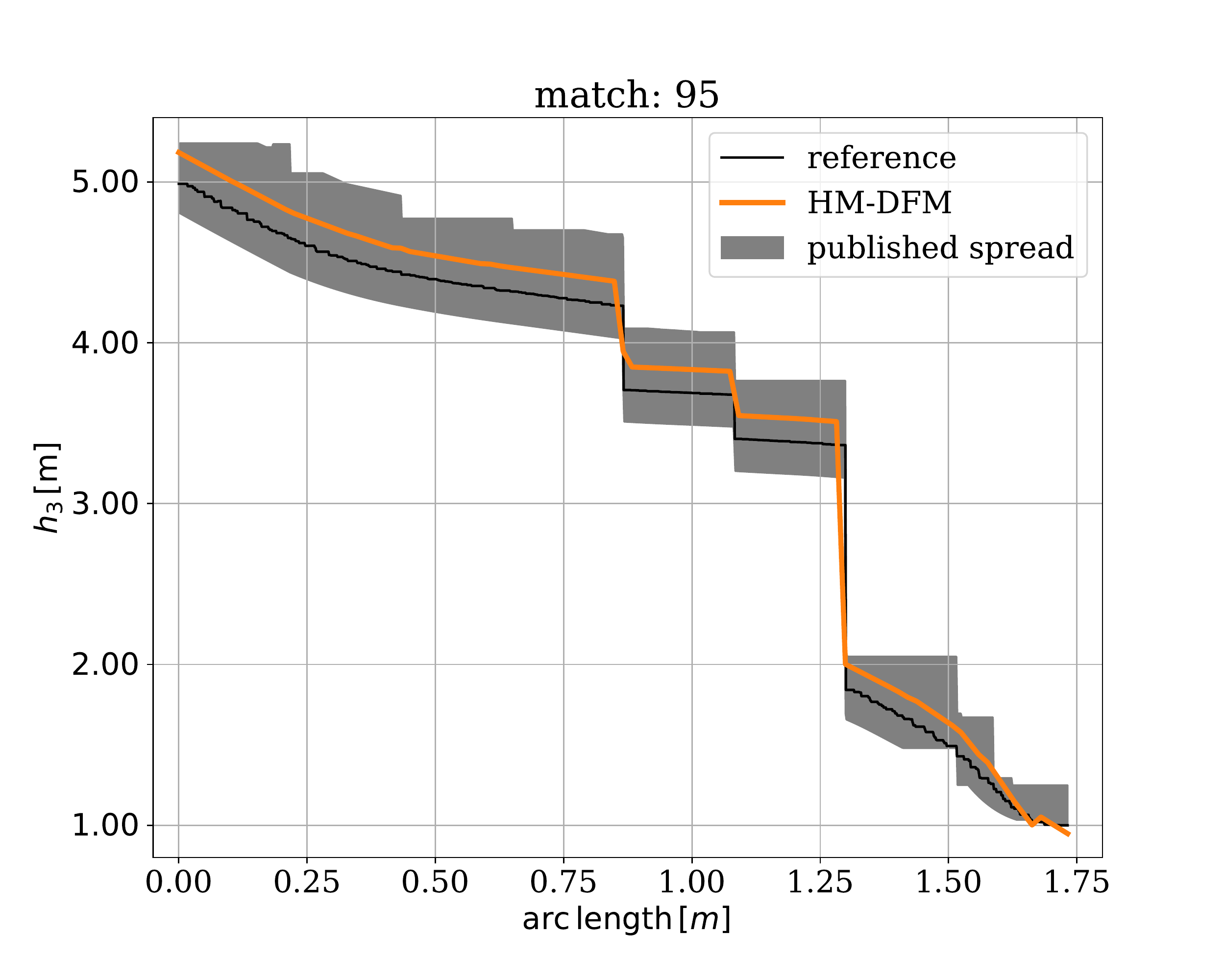}&
    \includegraphics[width=.48\textwidth]{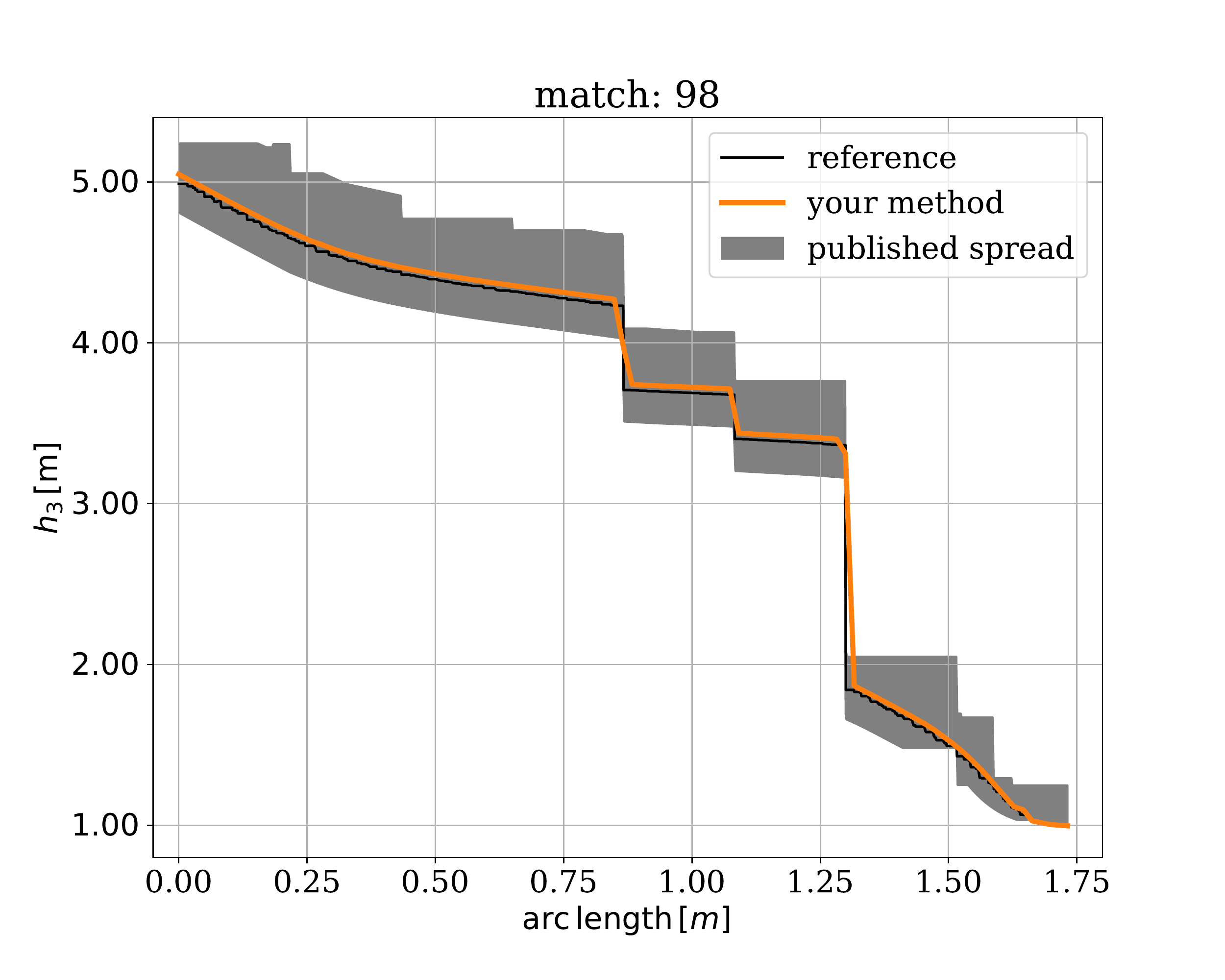}\\
    (a) $\sim 4k$ cells. &
    (b) $\sim 36k$ cells.
  \end{tabular}
  \caption{Benchmark 6 (blocking fractures): Hydraulic head in the matrix over the line
    $(0\mathrm{m}, 0\mathrm{m}, 0\mathrm{m})$--$(1\mathrm{m}, 1\mathrm{m}, 1\mathrm{m})$.
Left: results on a coarse mesh with about $4k$ cells.
Right: results on a fine mesh with about $36k$ cells.
  }
    \label{fig:regular3DB}
  \end{figure}

We further performed a convergence study of the flow and transport solvers \eqref{fem} and \eqref{transport-eq} via mesh refinements,
and record the $L^2$-errors in matrix velocity and postprocessed pressure,
and the $L^2$-errors in matrix concentration at final time $t=0.25$
in Table~\ref{tab:regularX} for the conductive fracture case and
in Table~\ref{tab:regularY} for the blocking fracture case, where the initial mesh is the coarse one with $4,375$ tetrahedral elements.
A total of three uniform mesh refinements was performed, and the solution on the third level mesh was used as the reference solution to calculate the associated errors.
The time step size is taken to be  $\Delta t = 2^{-l}\times2.5\times 10^{-3} s$, where $l$ is the mesh refinement level.
On the finest mesh, there are about  $2.25$ million tetrahedral elements and $4.5$ million globally coupled DOFs.
From both tables, we observe convergence of our schemes, and in particular the convergence rate for the velocity is approaching first order, that for the postprocessed pressure is approaching second order, and for the concentration is about first order.
\begin{table}[ht!]
    \centering
    \begin{tabular}{c|cc|cc|cc}
    mesh ref. lvl.     & $L^2$-err in $\bld u_h$ & rate
    & $L^2$-err in $p_h^*$ & rate    & $L^2$-err in $c_h(T)$ & rate\\
\hline
0 & 1.789e-01 & --& 1.456e-01 & --& 1.496e-01 &--\\
1 & 1.120e-01 & 0.68& 5.886e-02 & 1.31& 9.645e-02 & 0.63\\
2 & 6.181e-02 & 0.86& 1.852e-02 & 1.67& 5.102e-02 & 0.92\\
\hline
    \end{tabular}
    \vspace{1ex}
    \caption{Benchmark 6 with conductive fractures (fitted mesh): history of convergence for the $L^2$-errors in $\bld u_h$, $p_h^*$,  and $c_h(T)$ along mesh refinements. Reference solution is obtained on
    the third level refined fitted mesh with about $2.25$ million matrix elements and time step size $\Delta t = 3.125\times 10^{-4}$.}
    \label{tab:regularX}
\end{table}

\begin{table}[ht!]
    \centering
    \begin{tabular}{c|cc|cc|cc}
    mesh ref. lvl.     & $L^2$-err in $\bld u_h$ & rate
    & $L^2$-err in $p_h^*$ & rate
        & $L^2$-err in $c_h(T)$ & rate\\
\hline
0 & 1.791e-01 & --& 1.533e-01 & --& 1.288e-01 &--\\
1 & 1.118e-01 & 0.68& 6.080e-02 & 1.33& 8.139e-02 & 0.66\\
2 & 6.172e-02 & 0.86& 1.891e-02 & 1.68& 3.939e-02 & 1.05\\
\hline
    \end{tabular}
    \vspace{1ex}
    \caption{Benchmark 6 with blocking fractures (fitted mesh): history of convergence for the $L^2$-errors in $\bld u_h$, $p_h^*$,  and $c_h(T)$ along mesh refinements. Reference solution is obtained on
    the third level refined fitted mesh with about $2.25$ million matrix elements and time step size $\Delta t = 3.125\times 10^{-4}$.}
    \label{tab:regularY}
\end{table}

Finally, in  Figure~\ref{fig:regularY} we plot slices of concentrations
computed on the 3rd refined mesh  at final time
$t=0.25$ along the five vertical planes $x=0.1, x=0.3, x=0.5, x=0.7$ and $x=0.9$, and  in
Figure~\ref{fig:regularX}
we plot the evolution of mean concentration over time on the following three regions:
\begin{align*}
    \Omega_A&:= (0.5m, 1m)\times (0m, 0.5m)\times (0m,0.5m), \\
    \Omega_B&:= (0.5m, 0.75m)\times (0.5m, 0.75m)\times (0.75m,1m), \\
    \Omega_C&:= (0.75m, 1m)\times (0.75m, 1m)\times (0.5m,0.75m).
\end{align*}
\begin{figure}[ht]
  \centering
  \subfigure[C. $c_h$ on $x=0.1$]{\includegraphics[width = 1.2in]{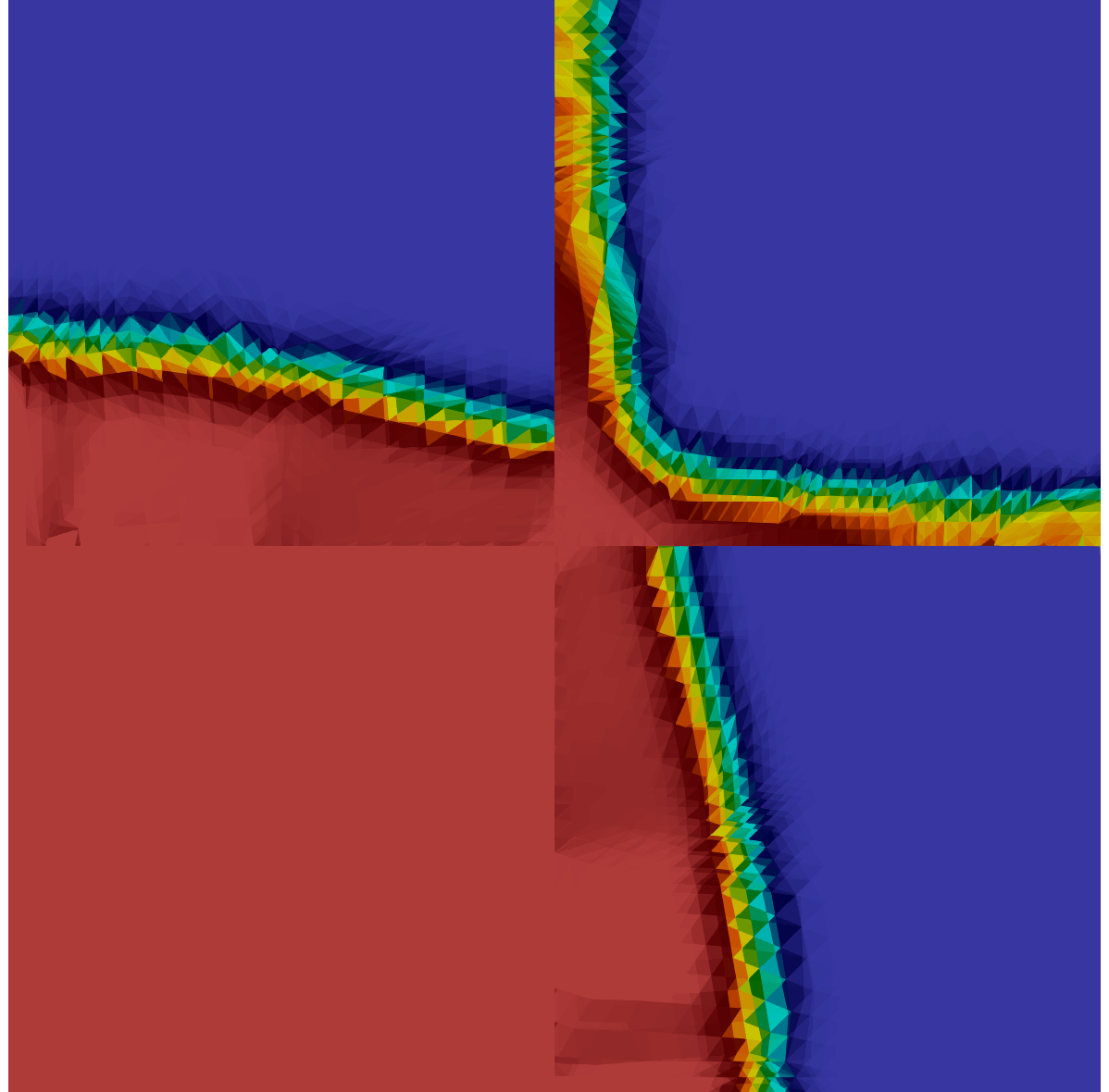}}
  \subfigure[C. $c_h$ on $x=0.3$]{\includegraphics[width = 1.2in]{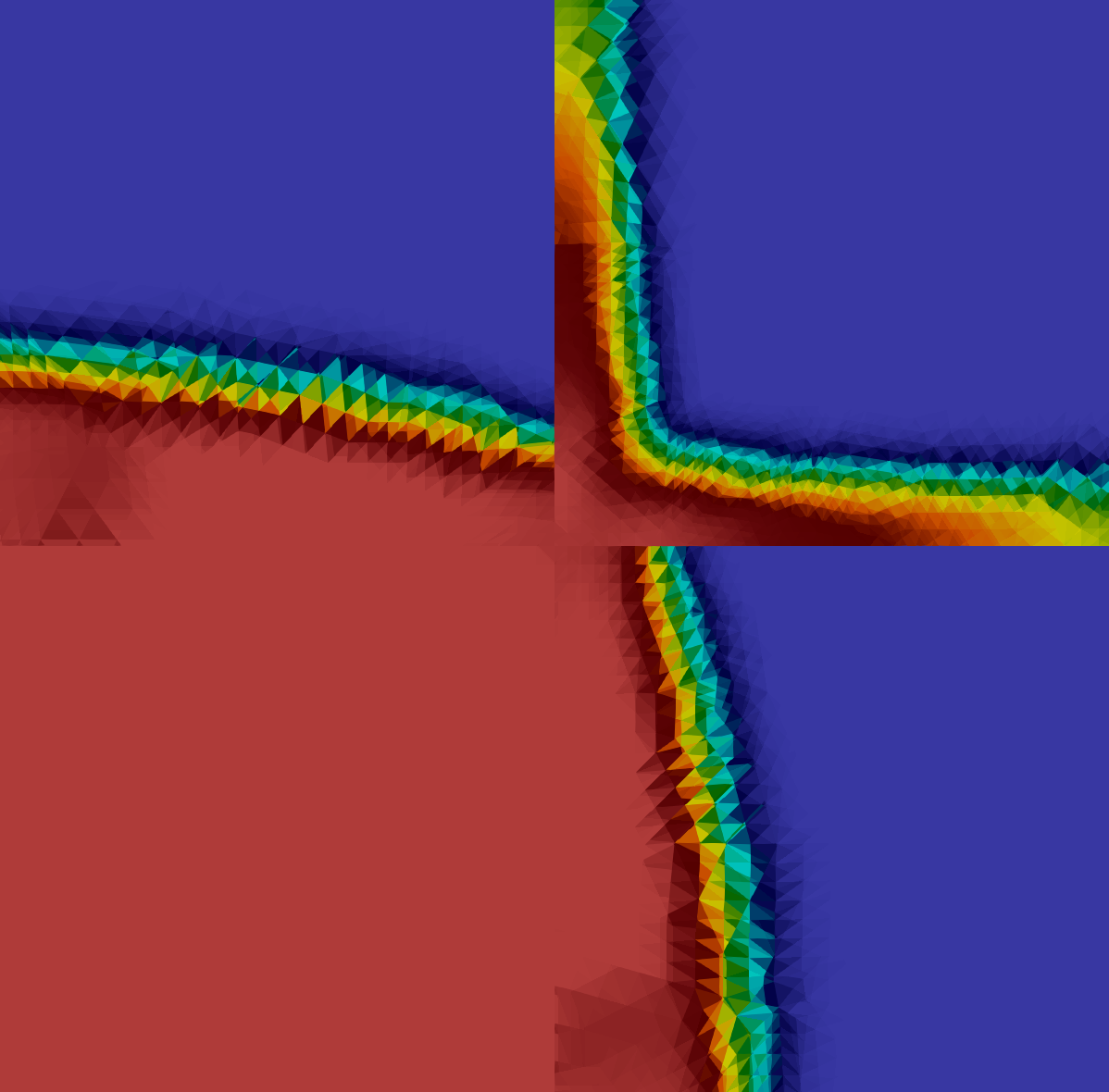}}
\subfigure[C. $c_h$ on $x=0.5$]{\includegraphics[width = 1.2in]{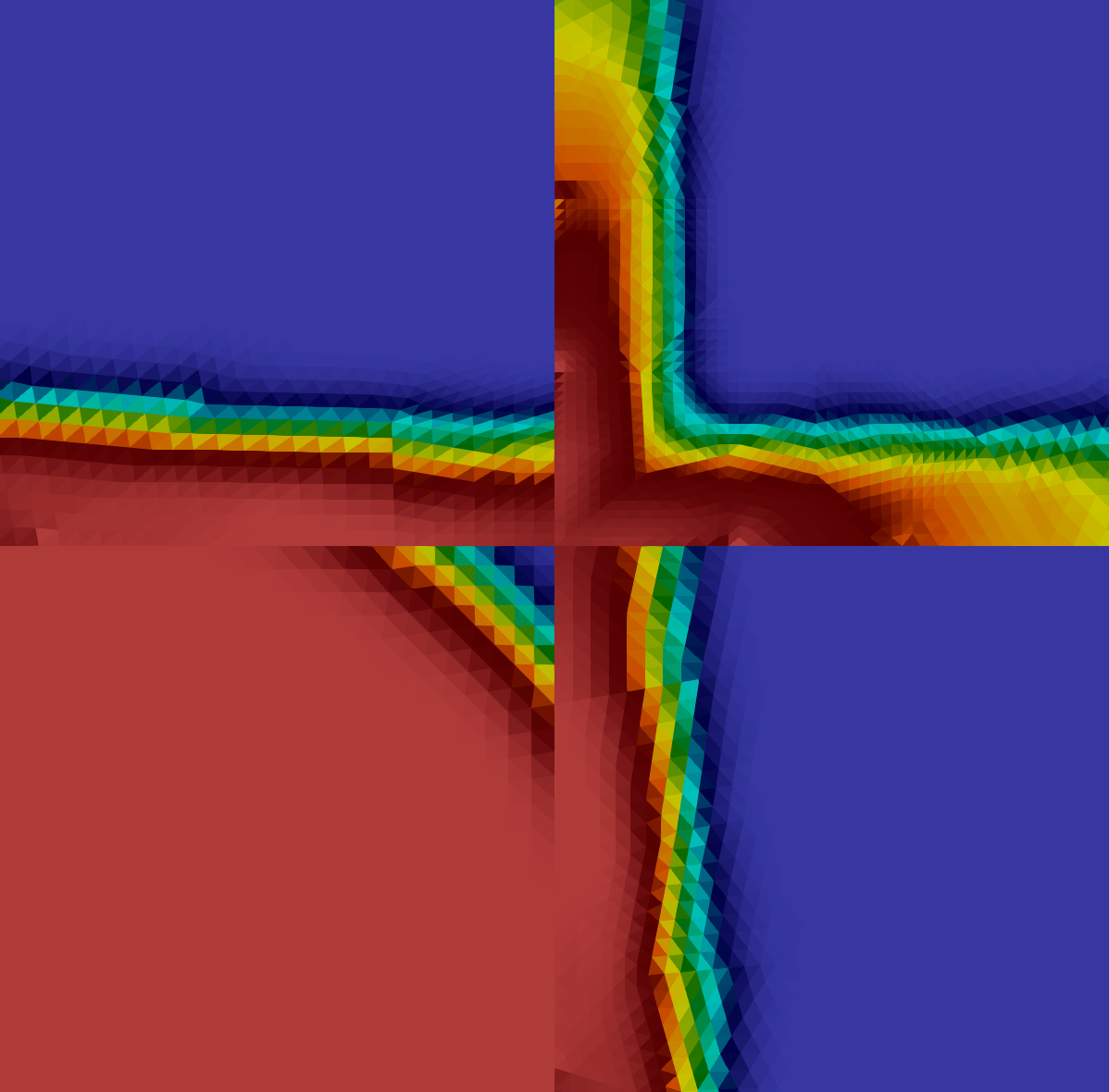}}
\subfigure[C. $c_h$ on $x=0.7$]{\includegraphics[width = 1.2in]{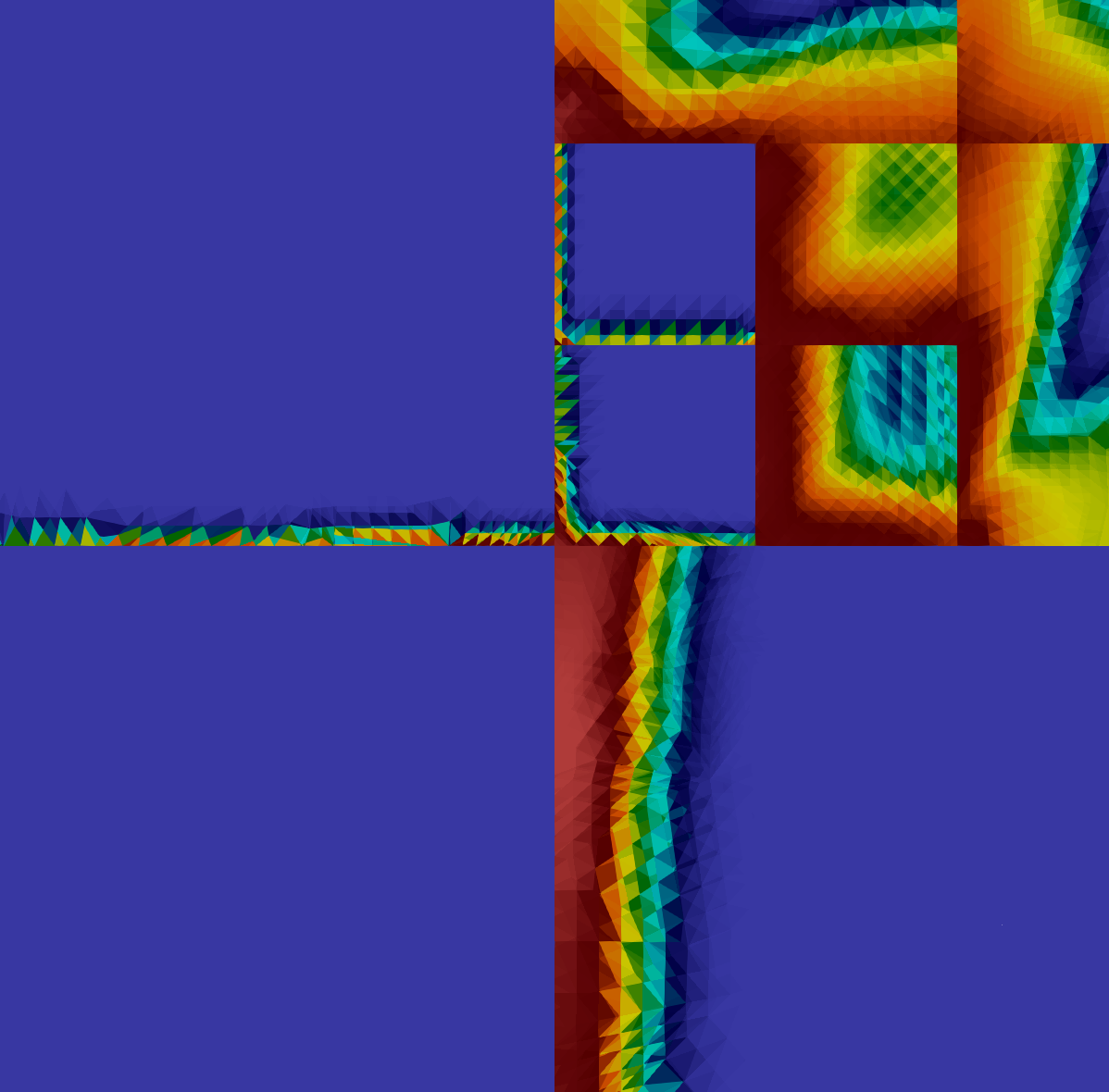}}
\subfigure[C. $c_h$ on $x=0.9$]{\includegraphics[width = 1.2in]{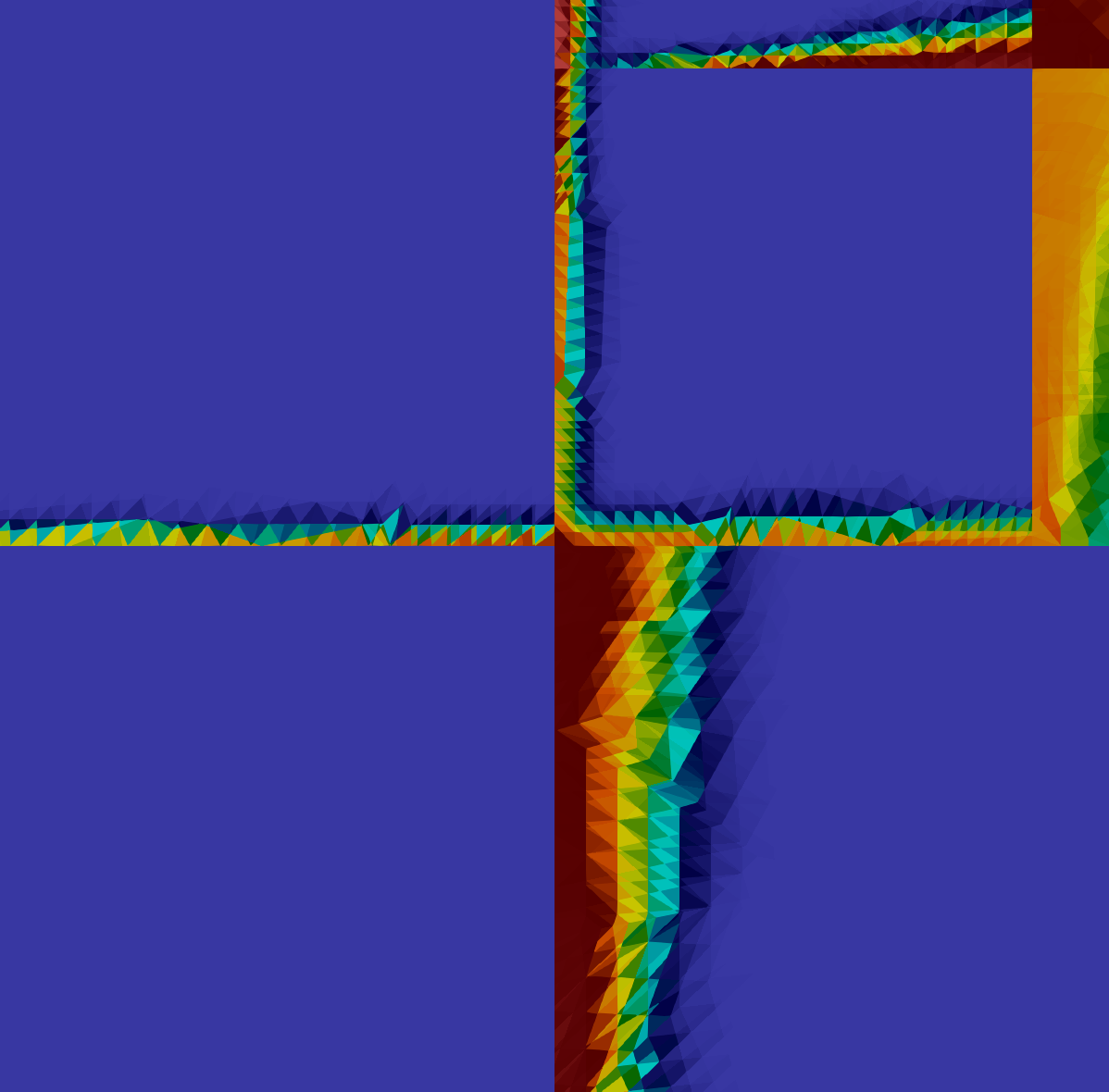}}
  \subfigure[B. $c_h$ on $x=0.1$]{\includegraphics[width = 1.2in]{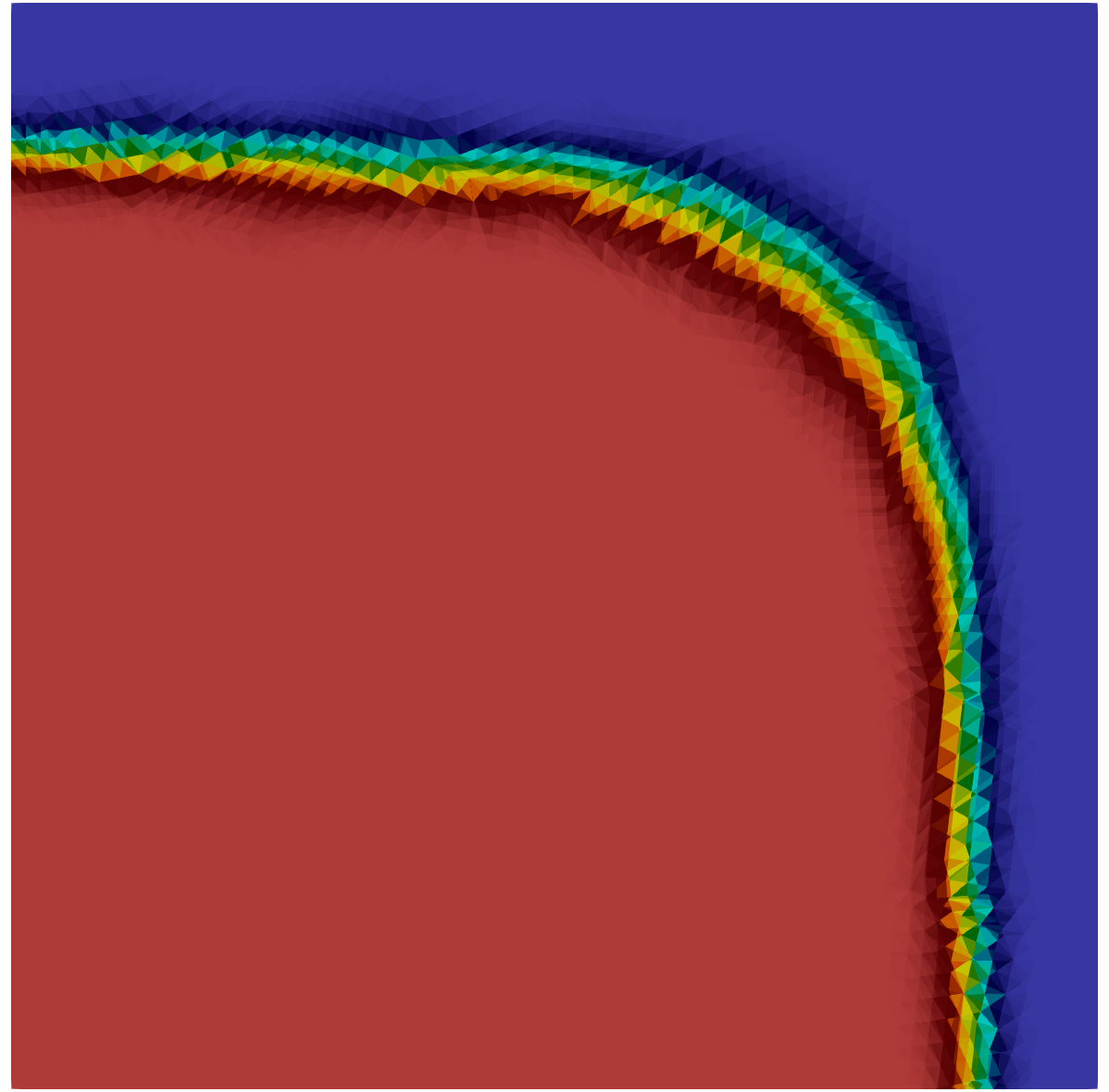}}
  \subfigure[B. $c_h$ on $x=0.3$]{\includegraphics[width = 1.2in]{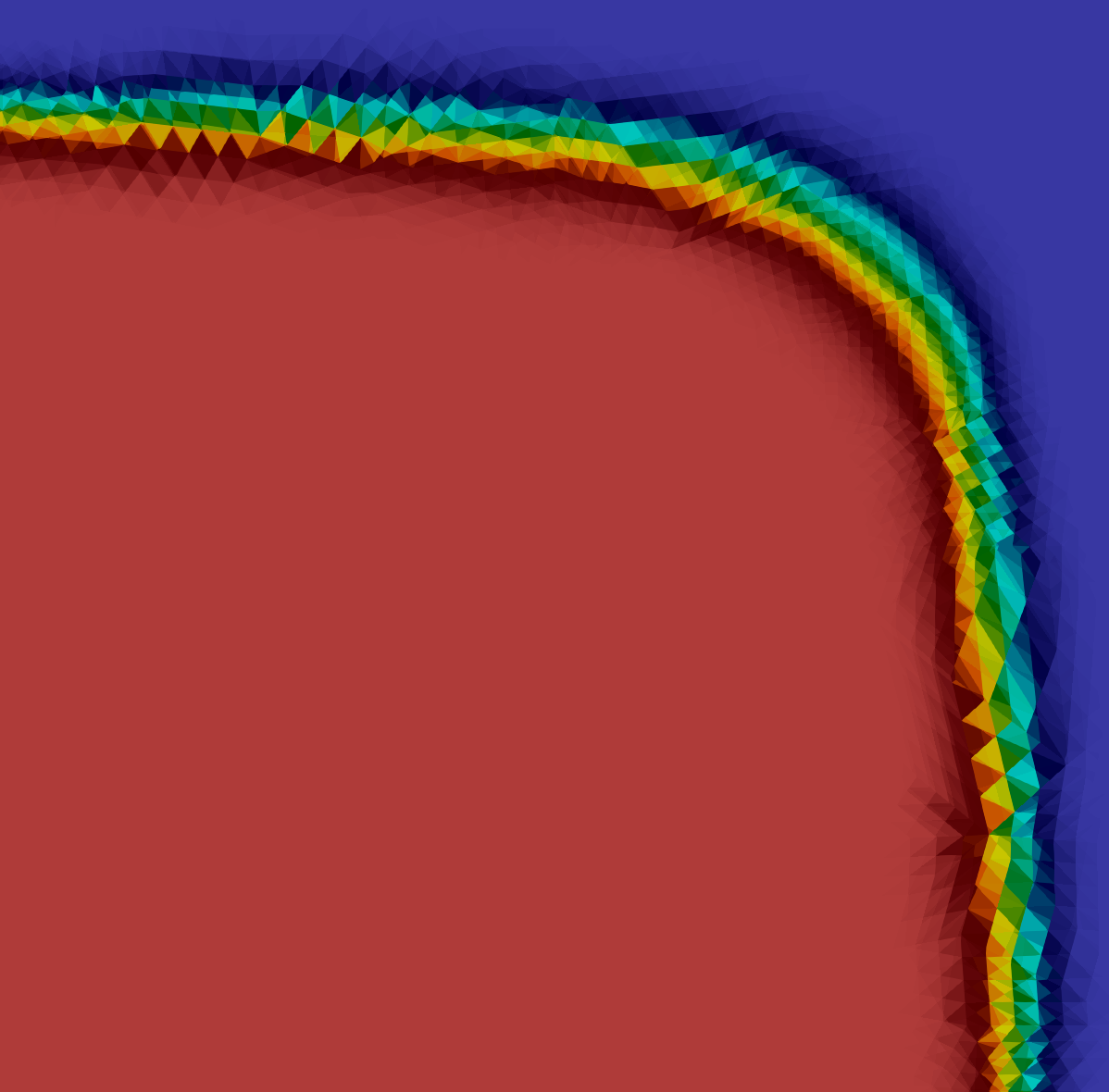}}
\subfigure[B. $c_h$ on $x=0.5$]{\includegraphics[width = 1.2in]{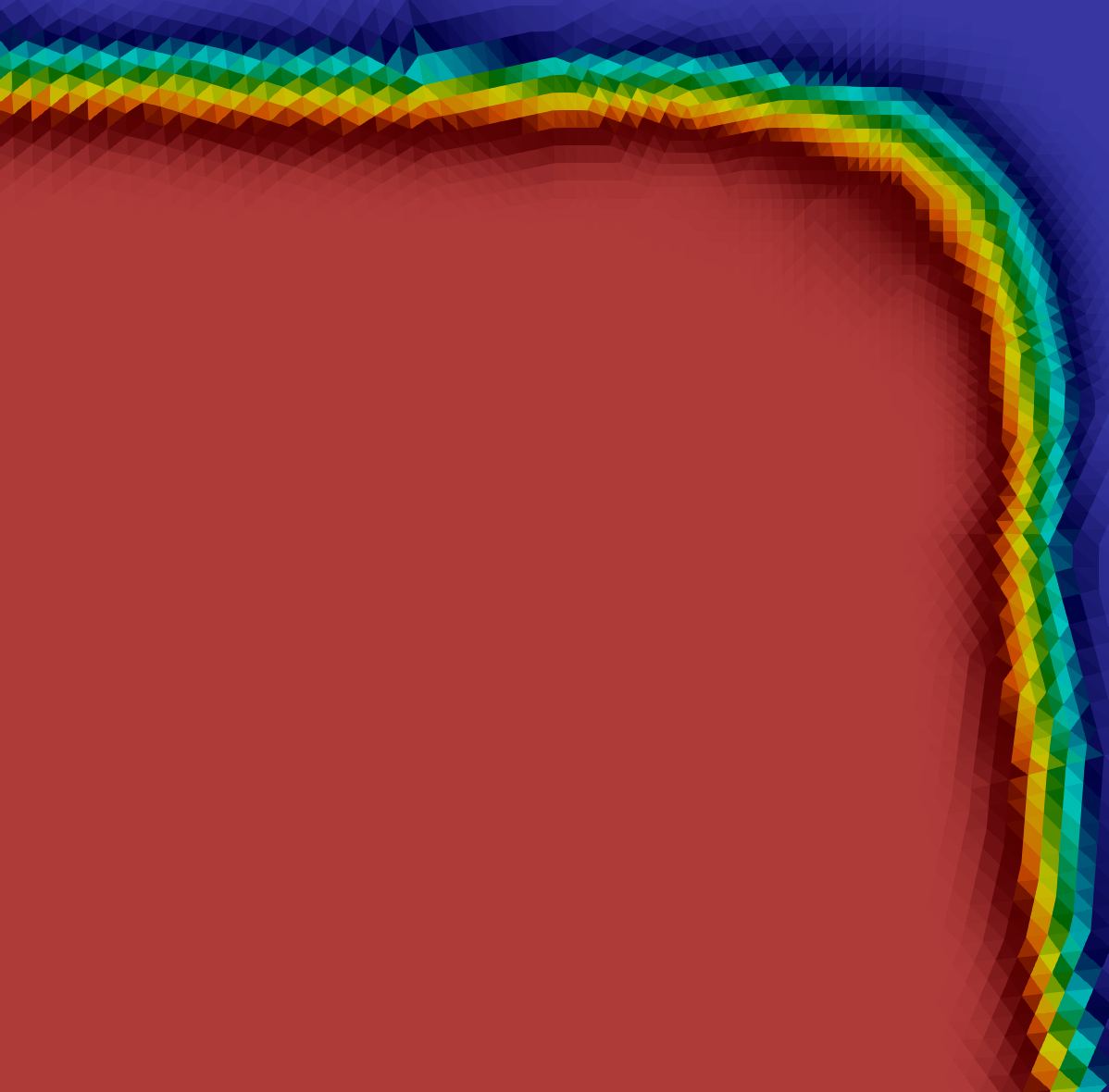}}
\subfigure[B. $c_h$ on $x=0.7$]{\includegraphics[width = 1.2in]{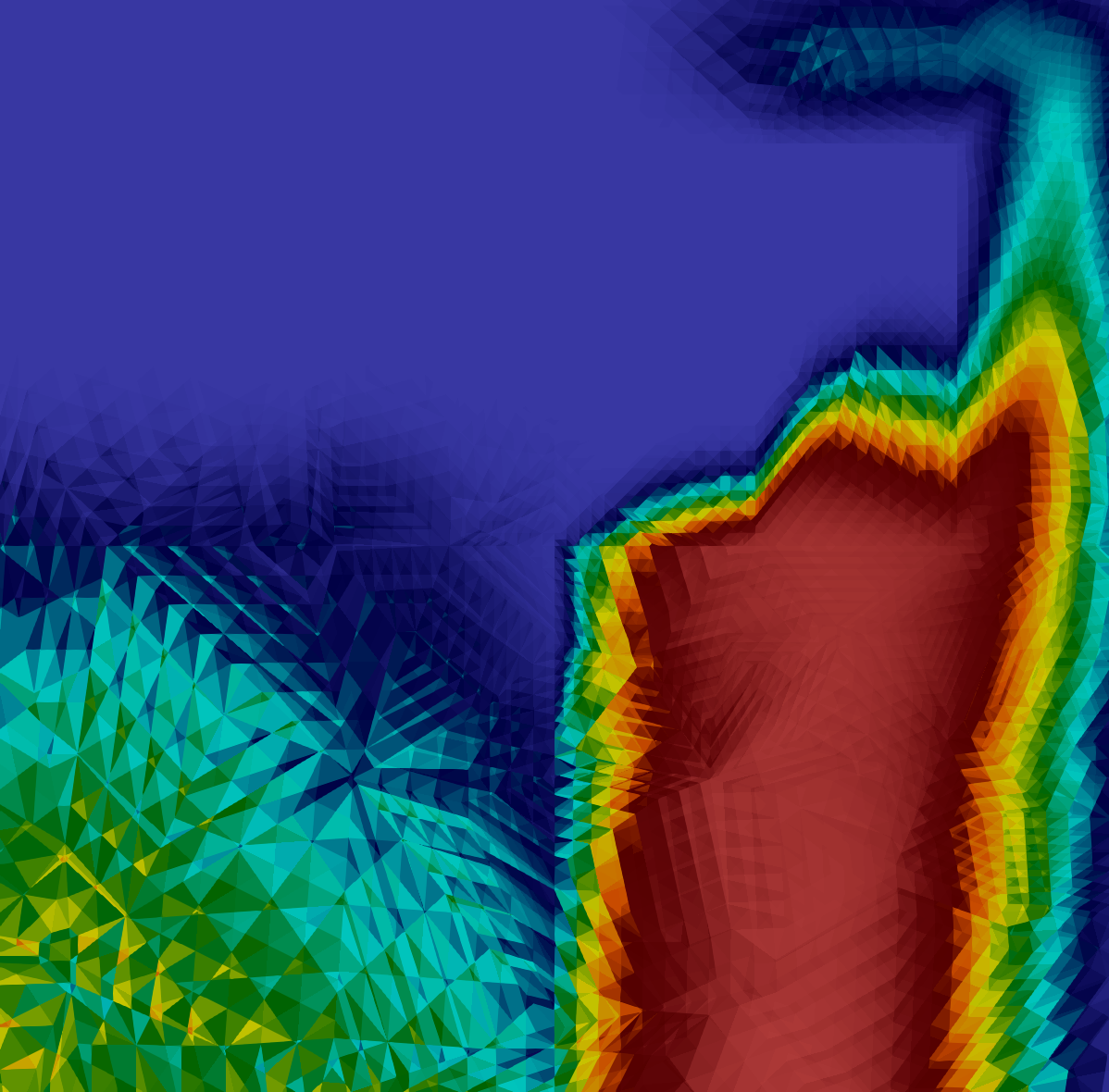}}
\subfigure[B. $c_h$ on $x=0.9$]{\includegraphics[width = 1.2in]{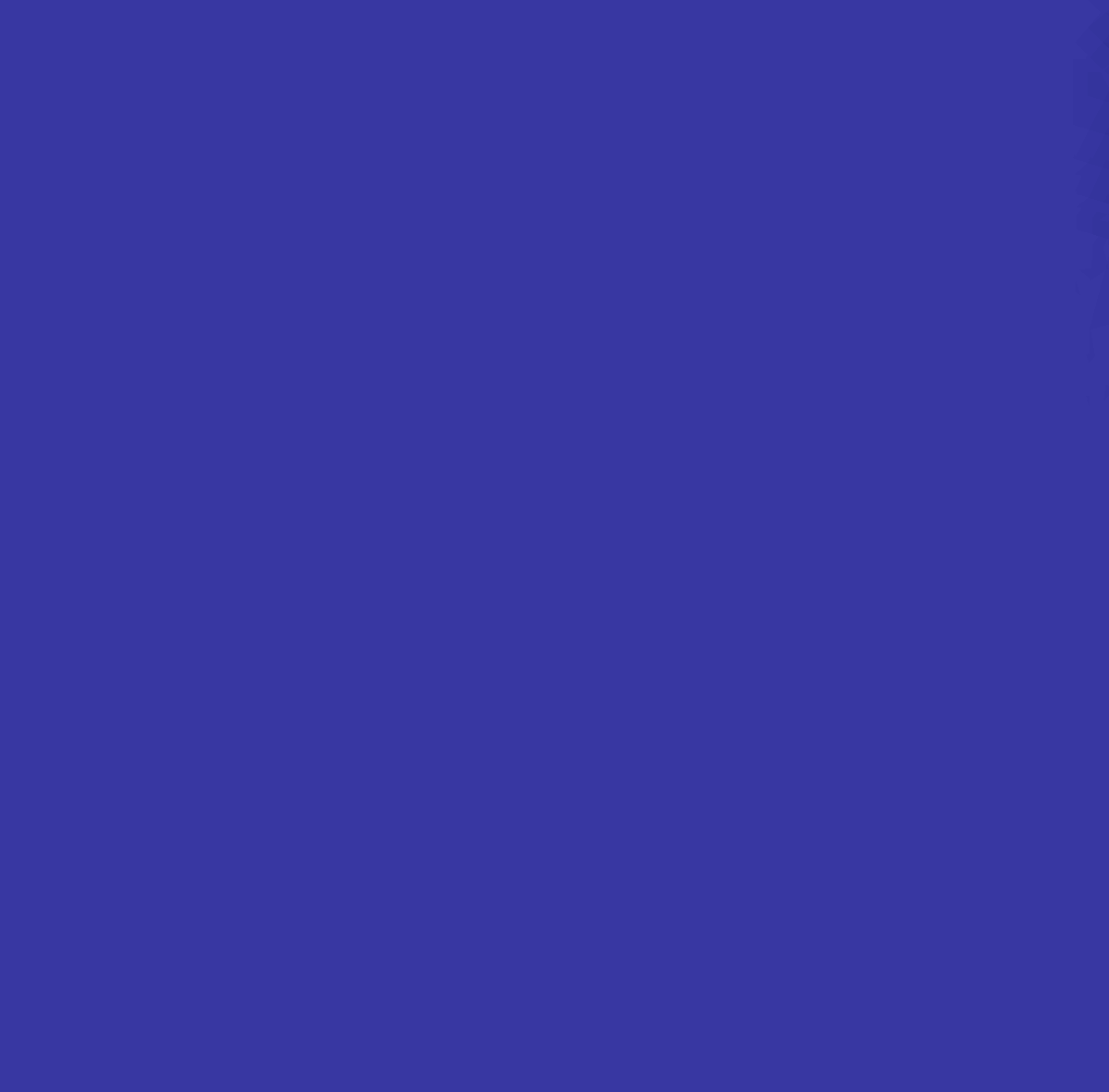}}
  \caption{Benchmark 6: Matrix concentration at time $t=0.25$
  along the five vertical planes $x=0.1$, $x=0.3, x=0.5, x=0.7$ and $x=0.9$.
Top row: conductive fractures. Bottom row: blocking fractures.
Color range: 0 (blue)-- 1(red).
  }
  \label{fig:regularY}
\end{figure}
\begin{align*}
    \Omega_A&:= (0.5m, 1m)\times (0m, 0.5m)\times (0m,0.5m), \\
    \Omega_B&:= (0.5m, 0.75m)\times (0.5m, 0.75m)\times (0.75m,1m), \\
    \Omega_C&:= (0.75m, 1m)\times (0.75m, 1m)\times (0.5m,0.75m).
\end{align*}
\begin{figure}[ht]
  \centering
  \subfigure[Conductive,  mean $c_h$ on $\Omega_A$]{\includegraphics[width = 2.0in]{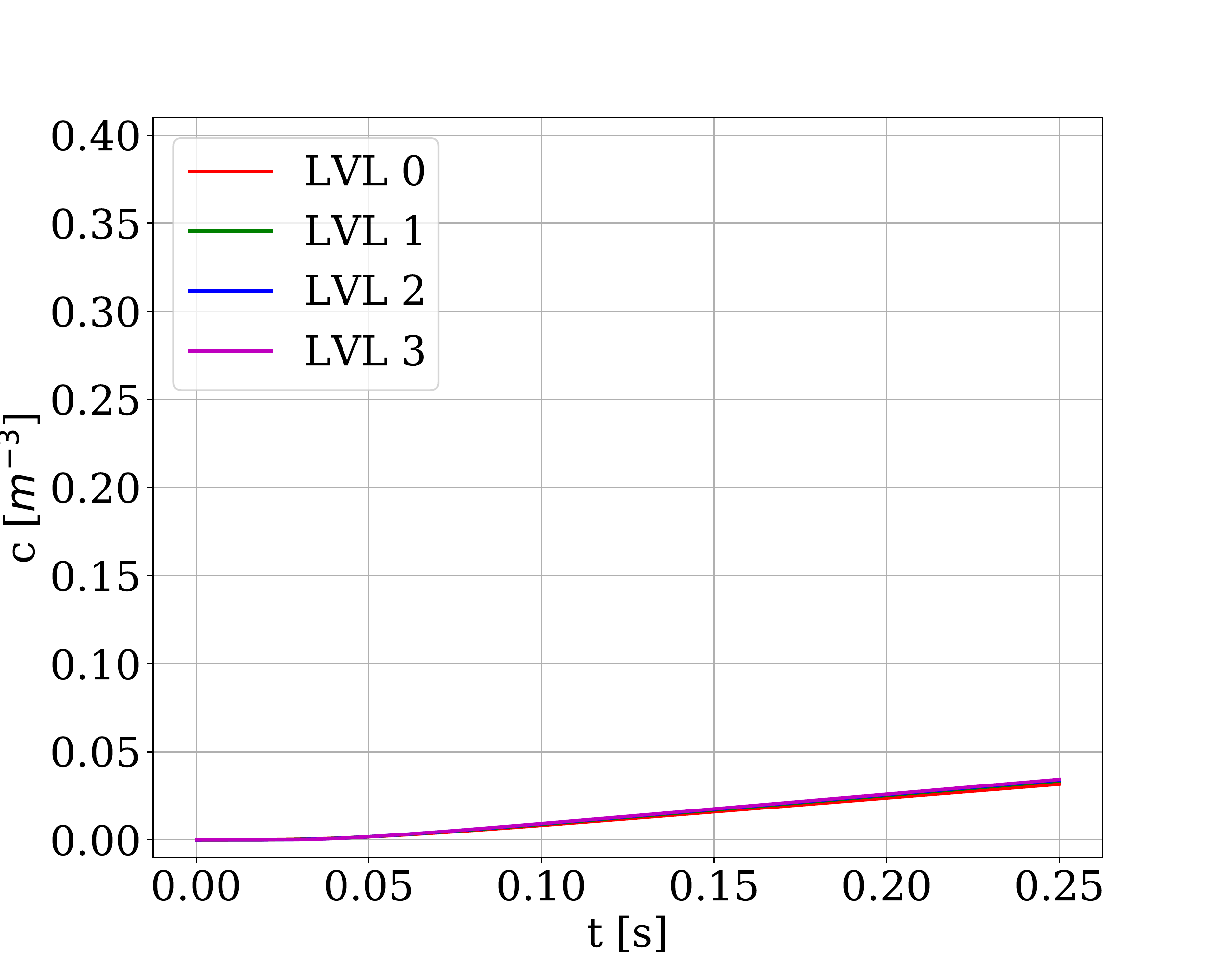}}
    \subfigure[Conductive,  mean $c_h$ on $\Omega_B$]{\includegraphics[width = 2.0in]{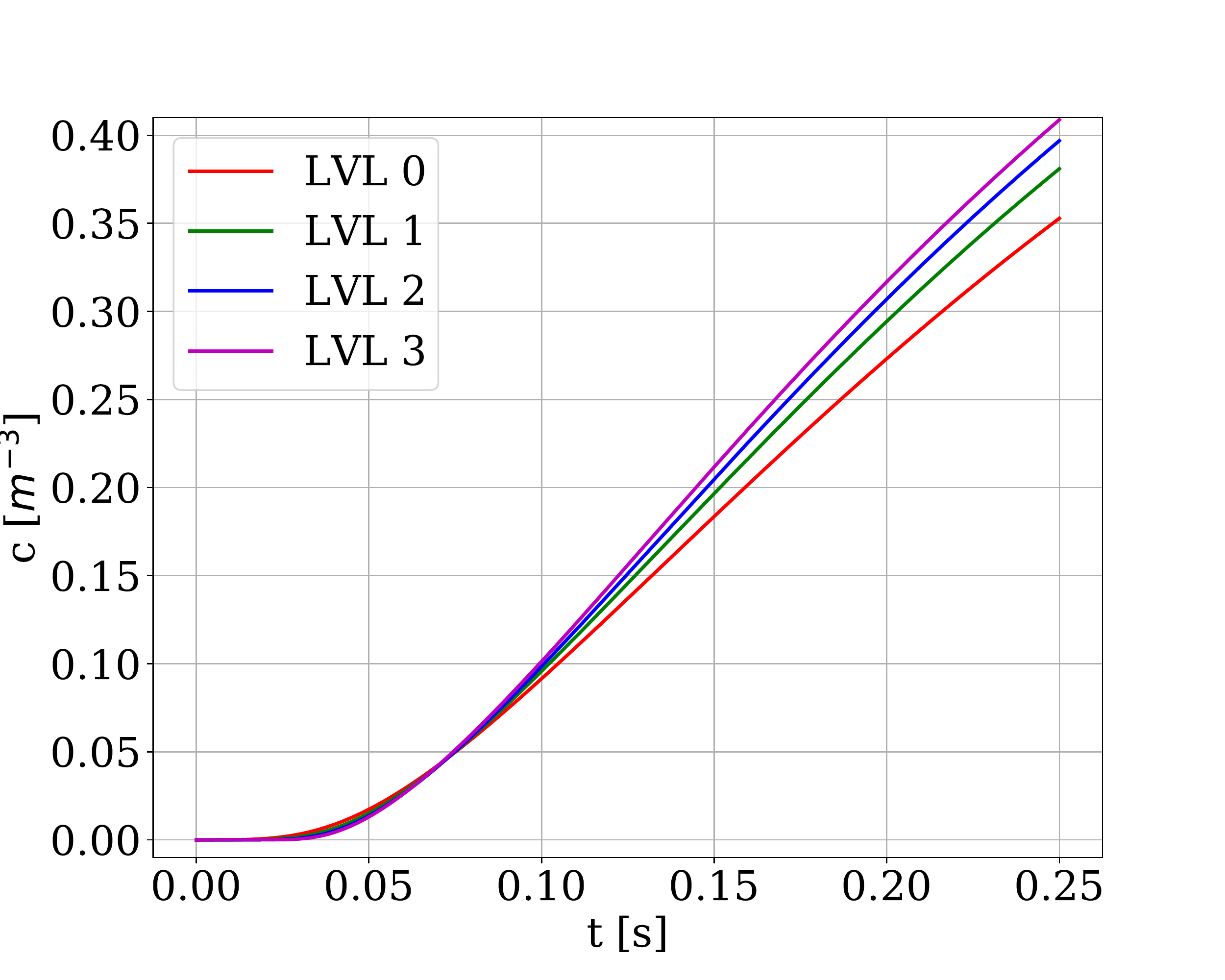}}
    \subfigure[Conductive,  mean $c_h$ on $\Omega_C$]{\includegraphics[width = 2.0in]{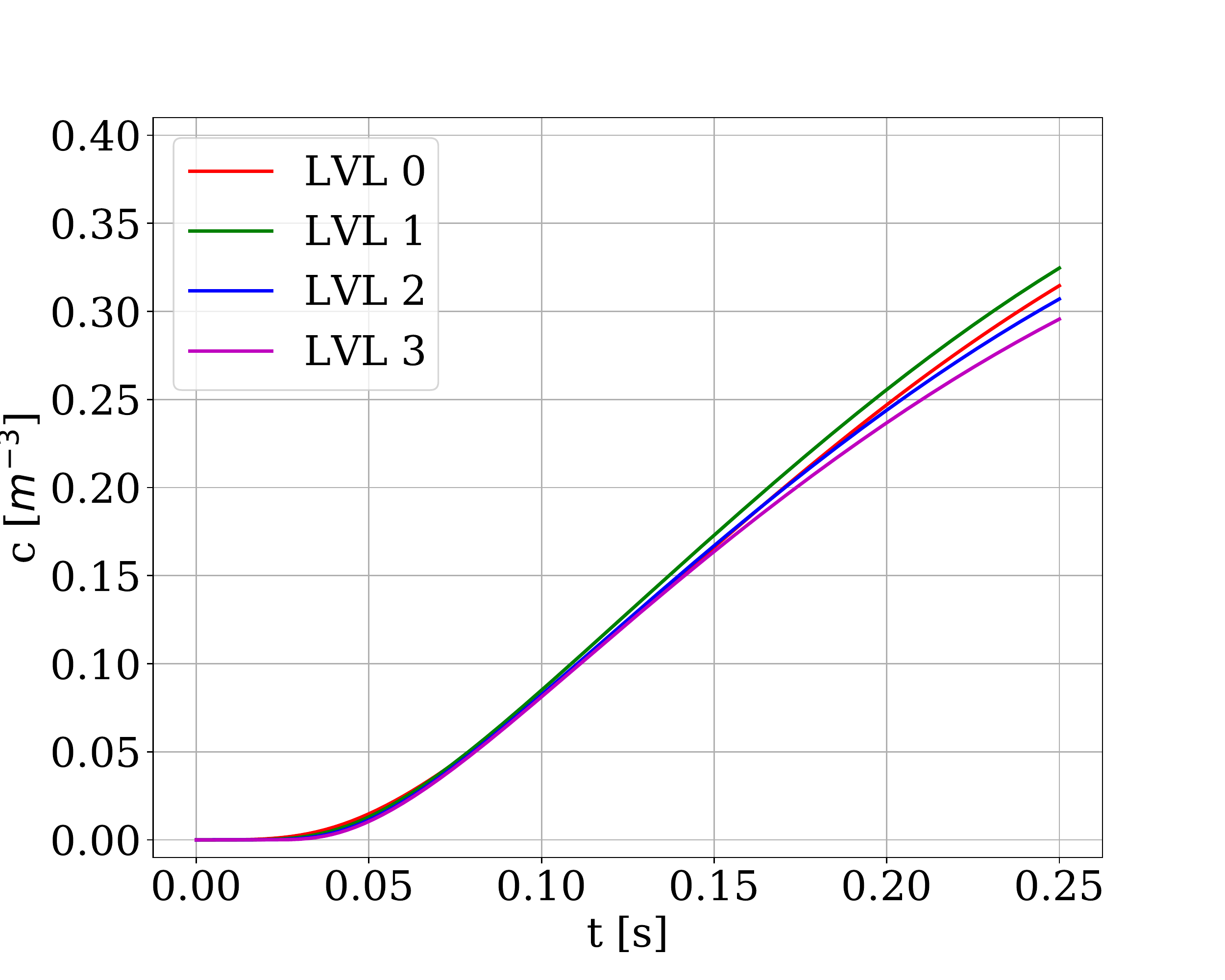}}
    \subfigure[Blocking,  mean $c_h$ on $\Omega_A$]{\includegraphics[width = 2.0in]{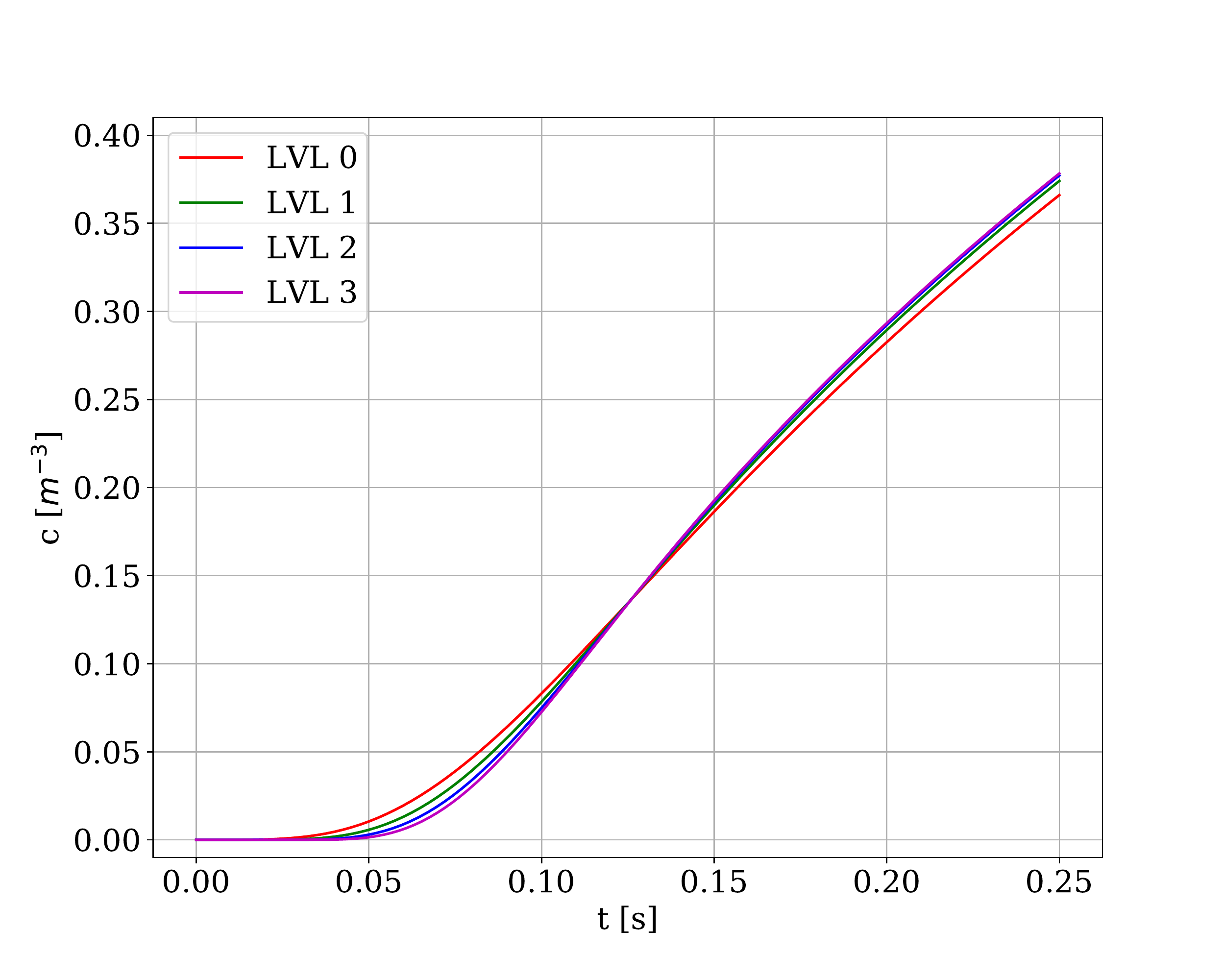}}
    \subfigure[Blocking,  mean $c_h$ on $\Omega_B$]{\includegraphics[width = 2.0in]{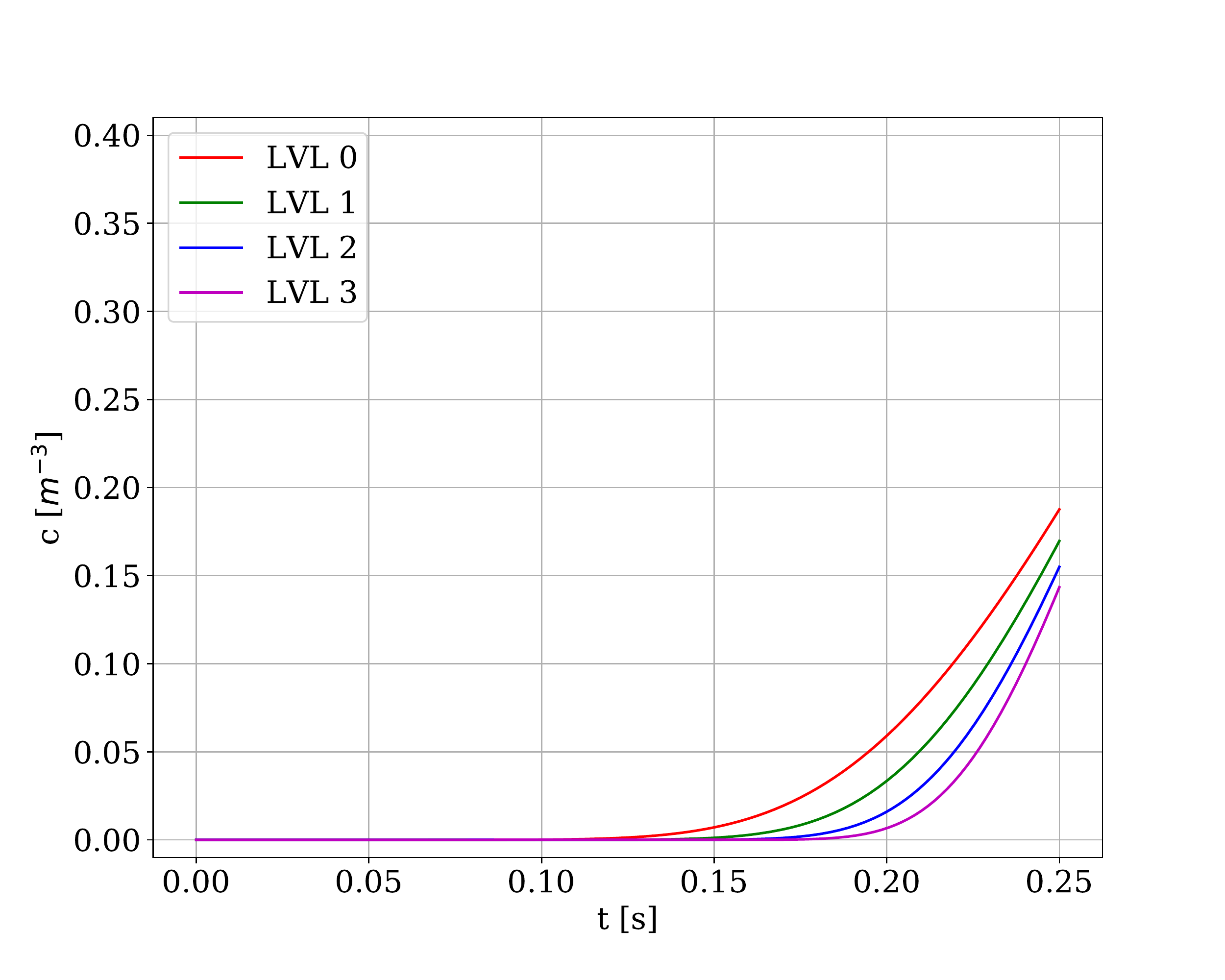}}
\subfigure[Blocking,  mean $c_h$ on $\Omega_C$]{\includegraphics[width = 2.0in]{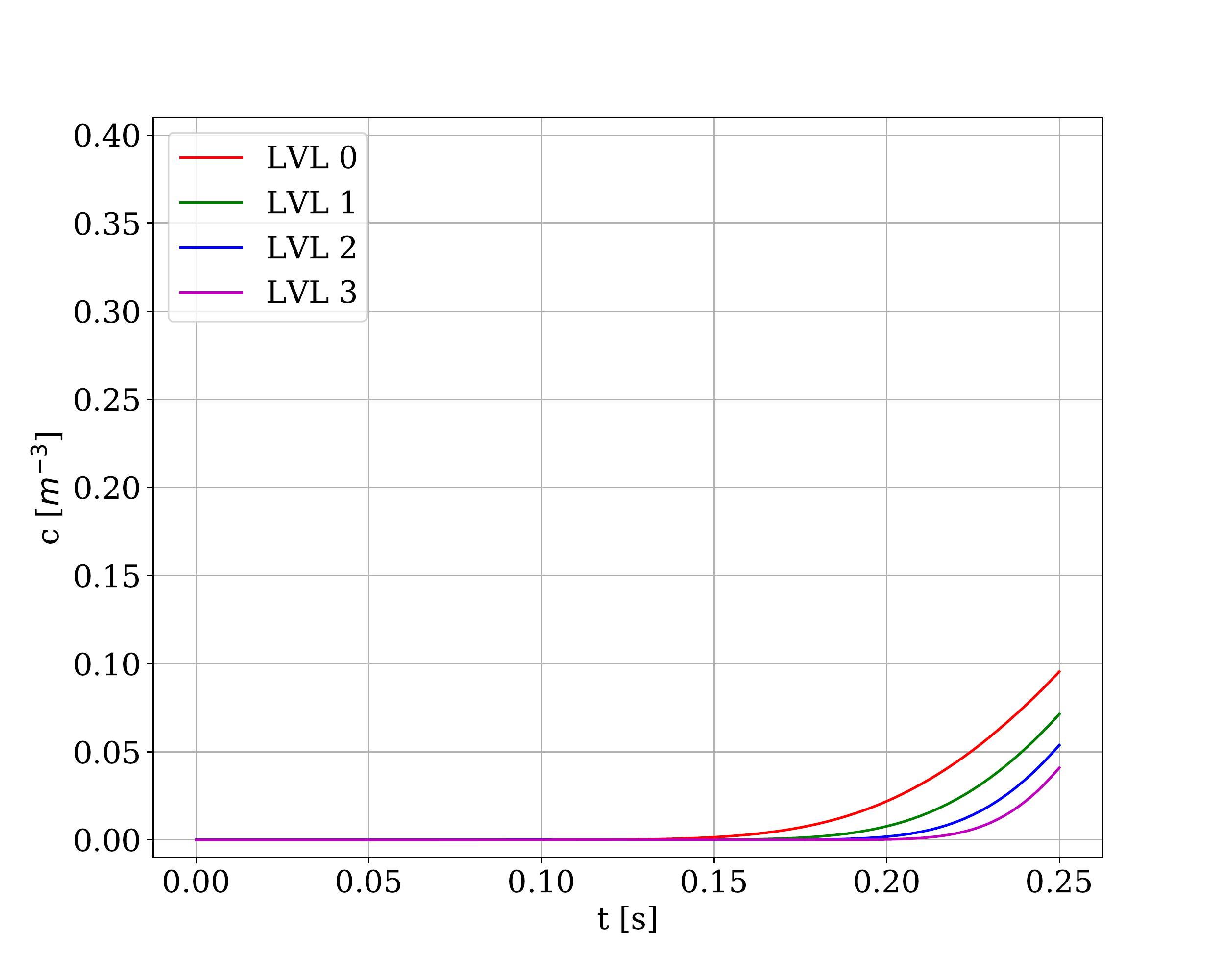}}
  \caption{Benchmark 6: Mean matrix concentration over time
  on $\Omega_A$ (left),  $\Omega_B$ (middle), and $\Omega_C$ (right).
Top row: conductive fractures. Bottom row: blocking fractures.
LVL stands for the number of mesh refinement levels.
  }
  \label{fig:regularX}
\end{figure}
From the results in Figure~\ref{fig:regularY}, we clearly observe the different flow pattern for the conductive fracture case in the first row and the
blocking fracture case in the second row.
We further note that the mean concentrations reported in Figure~\ref{fig:regularX} were presented in \cite[Figure 10]{Berre_2021} (only) on the coarse mesh with about $4k$ matrix elements and a coarse time step size $\Delta t = 2.5\times 10^{-3}s$.
Our results on four set of meshes are close to each other and improve slightly as the mesh and time step size refines, and they  are also qualitatively similar to the majority of the coarse-grid results in \cite[Figure 10]{Berre_2021}.

\subsection{Benchmark 7: Network with Small Features (3D)}
This is the third benchmark case proposed in \cite{Berre_2021}, in which small geometric features exist that may cause trouble for conforming meshing
strategies. The domain is the box
$\Omega = (0\mathrm{m}, 1\mathrm{m})\times (0\mathrm{m}, 2.25\mathrm{m})\times
(0\mathrm{m}, 1\mathrm{m})$, containing 8 fractures; see Figure \ref{fig:small}.
  \begin{figure}[ht]
  \centering
    \includegraphics[width=.7\textwidth]{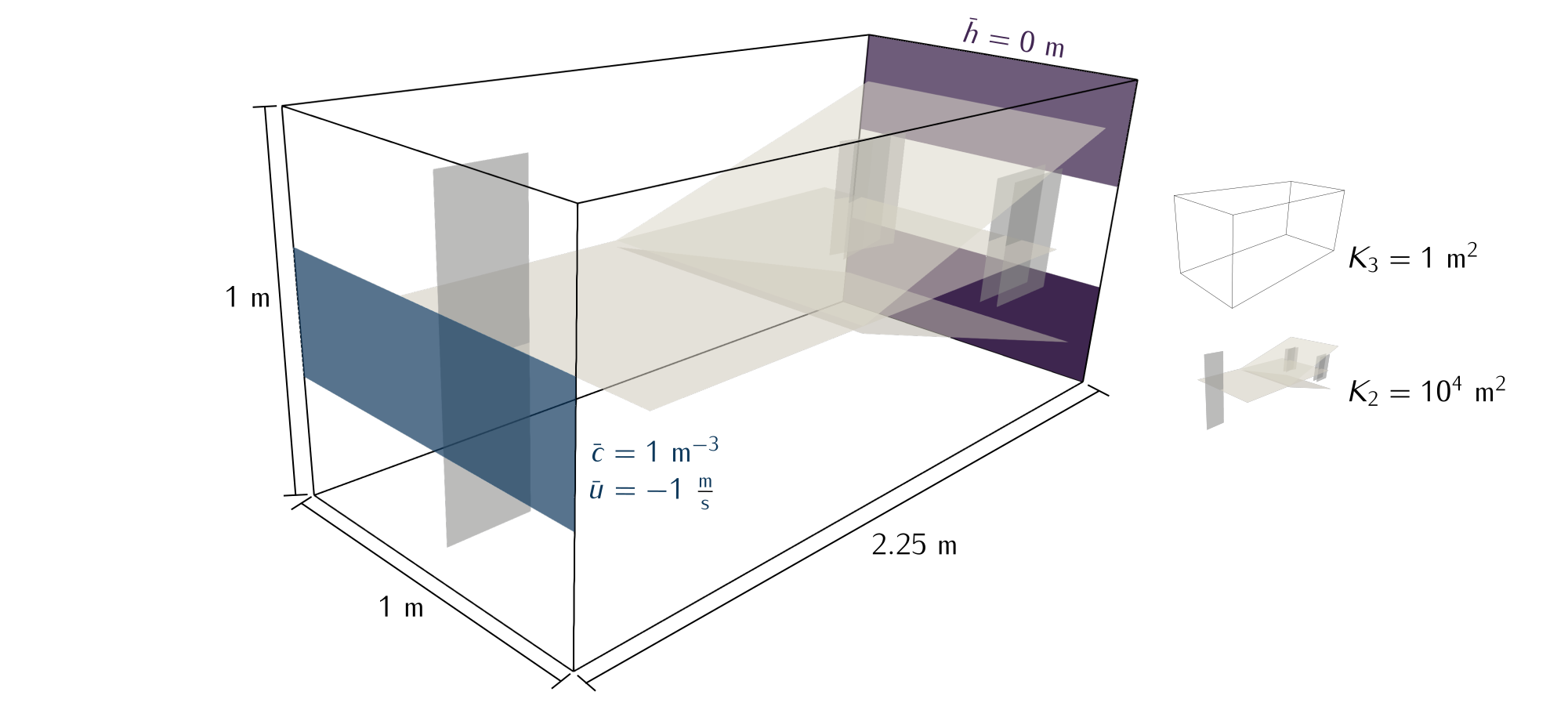}
    \caption{Benchmark 7: Conceptual model and geometrical description
    of the domain.}
    \label{fig:small}
  \end{figure}
Homogeneous Dirichlet boundary condition is imposed on
the outlet boundary
\[
  \partial \Omega_{out}:=\{
    (x,y,z): 0<x<1, y = 2.25, z < 1/3 or z > 2/3\},
\]
inflow boundary condition $\bld u\cdot\bld n = -1\mathrm{m/s}$ is imposed
on the inlet boundary
\[
  \partial \Omega_{in}:=\{
    (x,y,z): 0<x<1, y = 0, 1/3<z < 2/3\},
\]
and no-flow boundary condition is imposed on the remaining boundaries.
The conductivity in the matrix is $\mathbb K_m=1\mathrm{m^2}$, and that in the
fracture is $\mathbb{K}_c=10^4\mathrm{m^2}$. Fracture thickness is
$\epsilon=0.01\mathrm{m}$.

We perform the method \eqref{fem} on a coarse tetrahedral mesh with
  $31,812$ matrix elements and $3,961$ fracture elements and a
  fine tetrahedral mesh with $147,702$ matrix elements and
  $9,441$ fracture elements.
  The number of the globally coupled DOFs on the coarse mesh is $83,022$,
  while that on the fine mesh is $343,359$.
  The hydraulic head along the line $(0.5\mathrm{m}, 1.1\mathrm{m},
0\mathrm{m})$--$(0.5\mathrm{m}, 1.1\mathrm{m}, 1\mathrm{m})$
is shown in Figure \ref{fig:smallC}, where the reference data is obtained with
the {\sf USTUTT-MPFA} scheme on a grid with approximately $10^6$ matrix cells.
Here we observe a very good agreement with the reference data even on the coarse
mesh.
\begin{figure}[ht]
  \centering
  \begin{tabular}{cc}
    \includegraphics[width=0.48\textwidth]{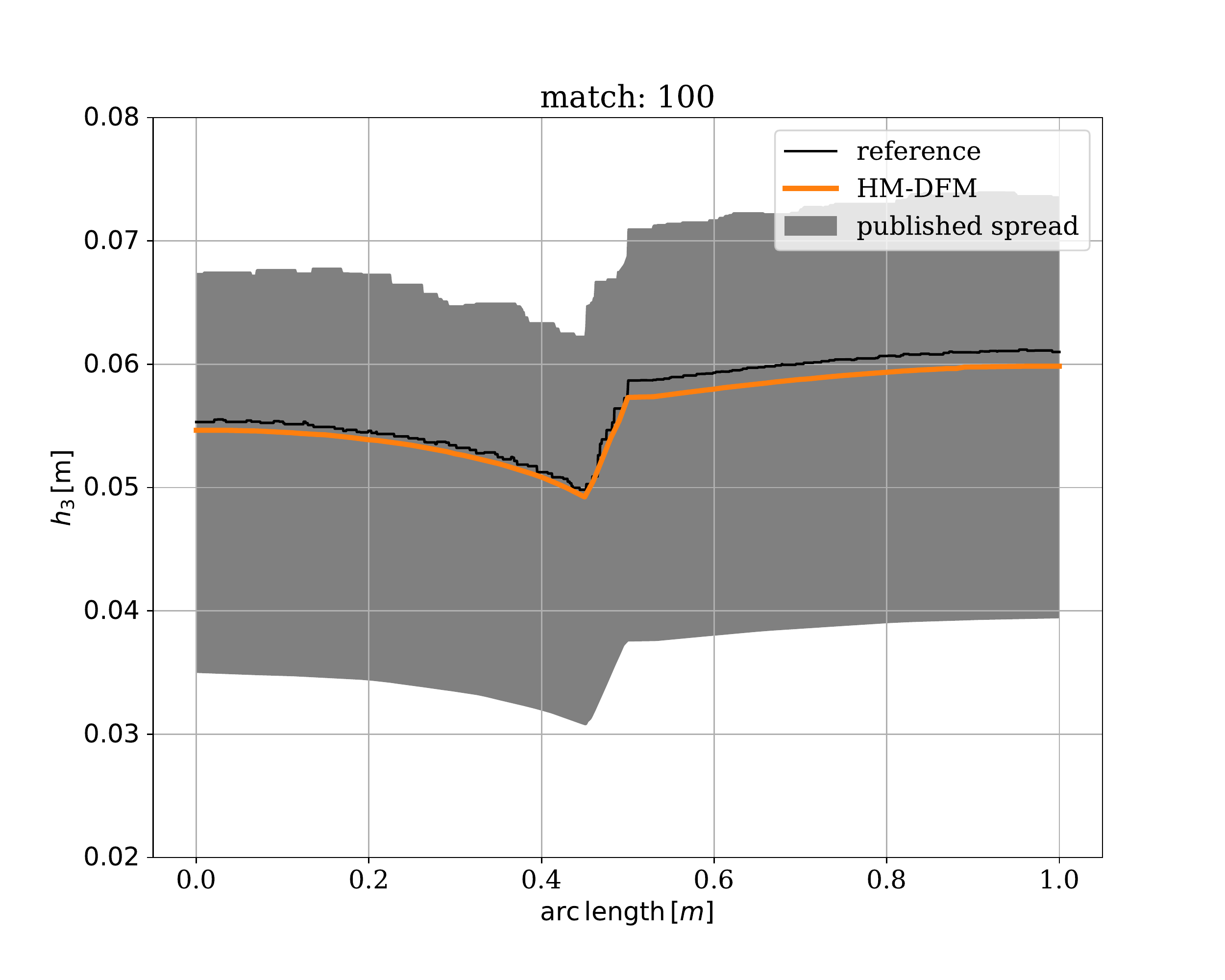}&
    \includegraphics[width=.48\textwidth]{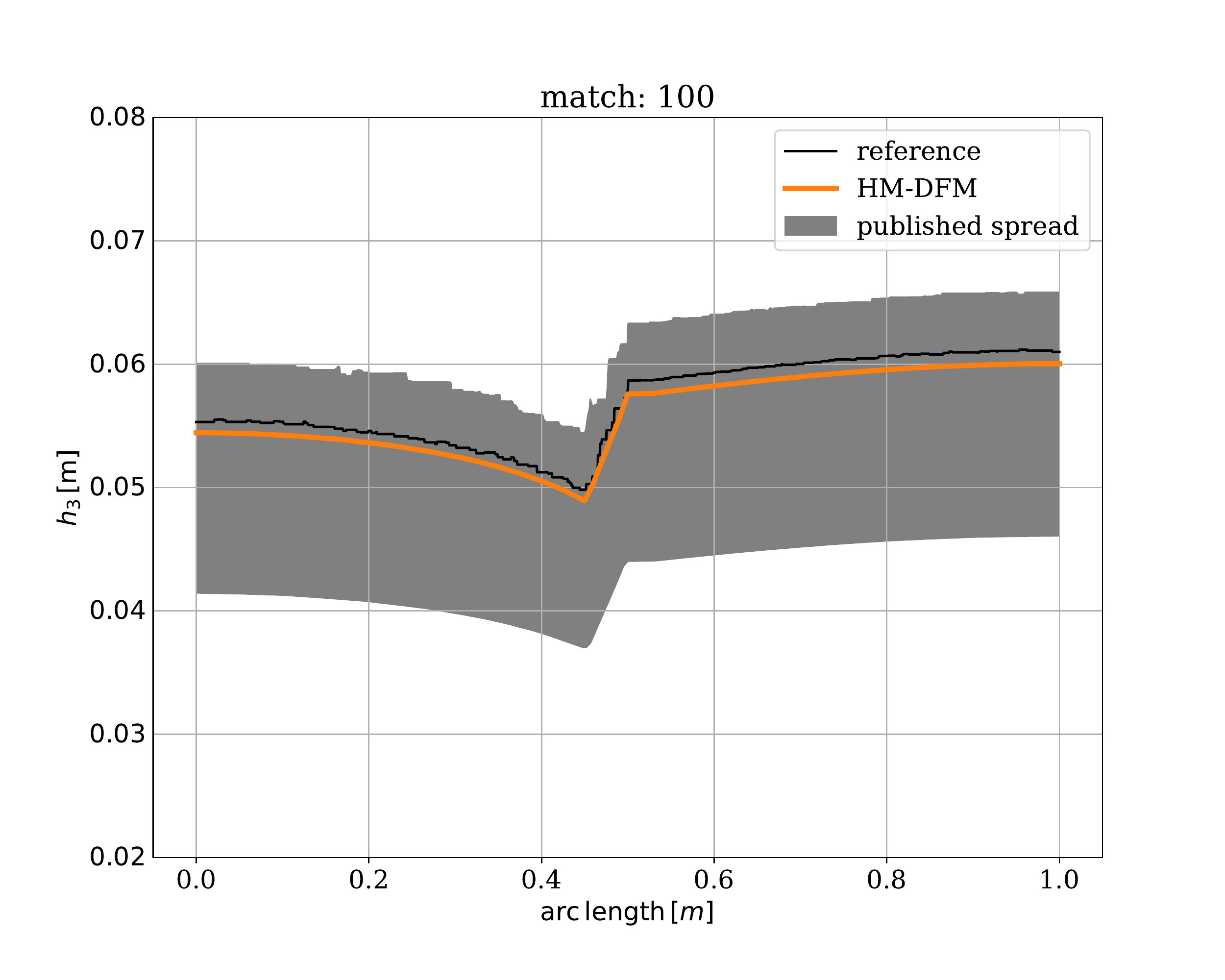}\\
    (a) $\sim 32k$ cells. &
    (b) $\sim 148k$ cells.
  \end{tabular}
    \caption{Benchmark 7: Hydraulic head in the matrix over the line
    $(0.5\mathrm{m}, 1.1\mathrm{m}, 0\mathrm{m})$--$(0.5\mathrm{m}, 1.1\mathrm{m}, 1\mathrm{m})$.
Left: results on a coarse mesh with about $32k$ cells.
Right: results on a fine mesh with about $148k$ cells.
  }
    \label{fig:smallC}
  \end{figure}

  \subsection{Benchmark 8: Field Case (3D)}
This is the last benchmark case proposed in \cite{Berre_2021}.
The geometry is based on  a postprocessed outcrop from the island of Algerøyna,
outside Bergen, Norway, which contains 52 fracture.
The simulation domain is the box $\Omega = (-500\mathrm{m}, 350\mathrm{m})\times
(100\mathrm{m}, 1500\mathrm{m})\times (-100\mathrm{m}, 500\mathrm{m})$.
The fracture geometry is depicted in Figure \ref{fig:field}.
Homogeneous Dirichlet boundary condition is imposed on
the outlet boundary
\[
  \partial \Omega_{out}:=
  \underbrace{
  \{-500\}\times(100, 400)\times (-100, 100)}_{\partial\Omega_{out,0}}
\;\cup\;
\underbrace{
  \{350\}\times(100, 400)\times (-100, 100)}_{\partial\Omega_{out,1}}
\]
uniform unit inflow $\bld u\cdot \bld n = 1\mathrm{m/s}$
is imposed on the inlet boundary
\[
  \partial \Omega_{in}:=
  \underbrace{\{-500\}\times(1200, 1500)\times (300, 500)}_{\partial\Omega_{in,0}}
\;\cup\;
\underbrace{
(-500, -200)\times \{1500\}\times (300, 500)}_{\partial\Omega_{in, 1}}.
\]
Conductivity is $\mathbb K_m = 1\mathrm{m^2}$ in the matrix, and
$\mathbb K_c = 10^4 \mathrm{m^2}$ in the fracture.
Fracture thickness is $\epsilon = 10^{-2}\mathrm{m}$.
  \begin{figure}[ht]
  \centering
    \includegraphics[width=.7\textwidth]{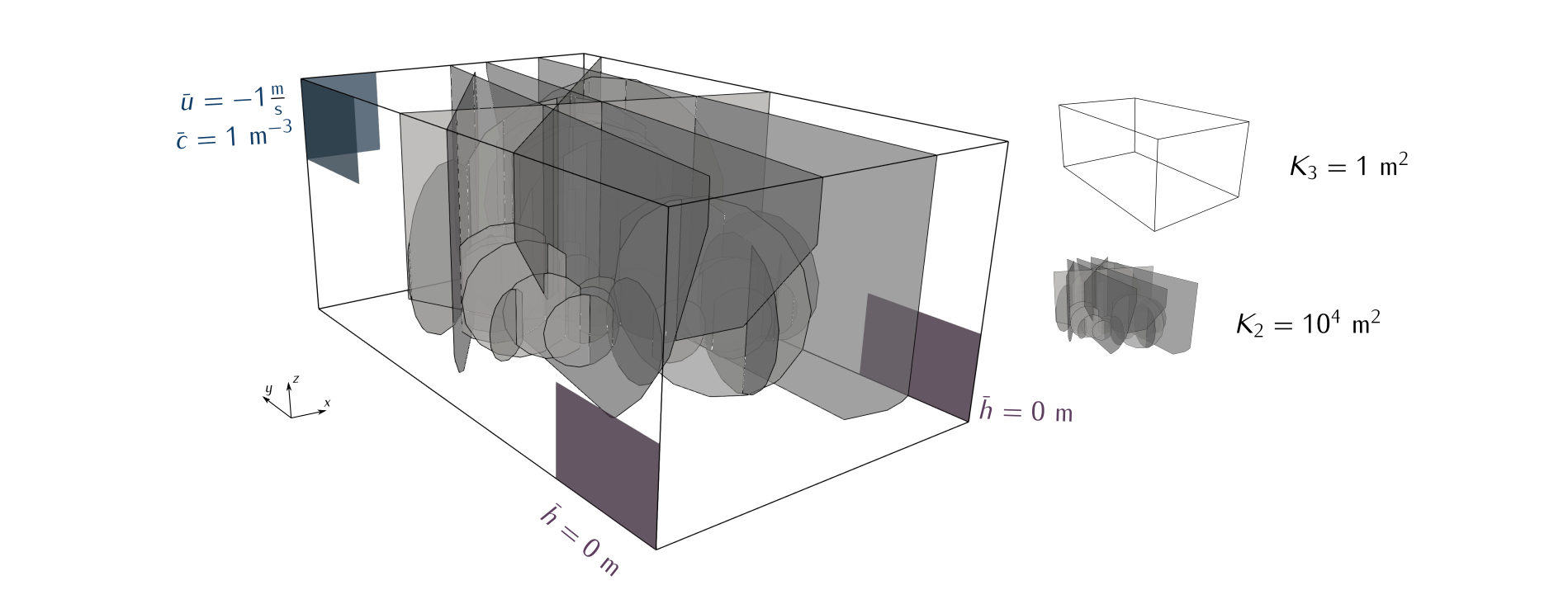}
    \caption{Benchmark 8: Conceptual model and geometrical description
    of the domain.}
    \label{fig:field}
  \end{figure}

We perform the method \eqref{fem} on a tetrahedral mesh with
  $241,338$ matrix elements and $47,154$ fracture elements.
  The number of the globally coupled DOFs is $696,487$.

  The hydraulic head along the two diagonal lines
  $(-500\mathrm{m}, 100\mathrm{m},
-100\mathrm{m})$--$(350\mathrm{m}, 1500\mathrm{m}, 500\mathrm{m})$
  and
  $(350\mathrm{m}, 100\mathrm{m},
-100\mathrm{m})$--$(-500\mathrm{m}, 1500\mathrm{m}, 500\mathrm{m})$
are shown in Figure \ref{fig:fieldC},
along with published results from \cite{Berre_2021}.
Similar to Benchmark 4 in 2D, no reference data on refined meshes was provided
for this problem
due to its complexity.
Comparing with the published results in Figure \ref{fig:fieldC}
we observe that our method still performs quite well.

\begin{figure}[ht]
  \centering
  \begin{tabular}{cc}
    \includegraphics[width=0.48\textwidth]{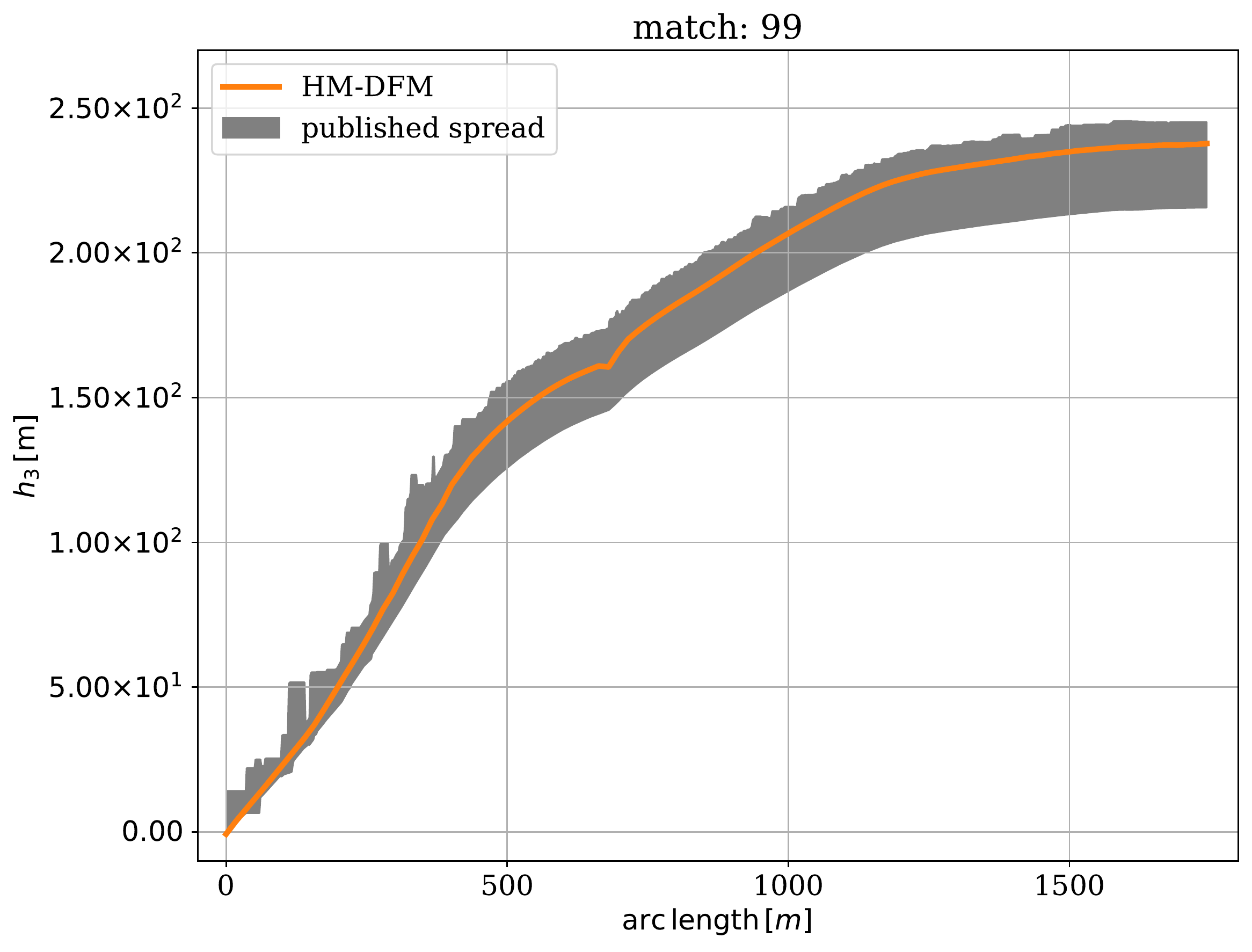}&
    \includegraphics[width=.48\textwidth]{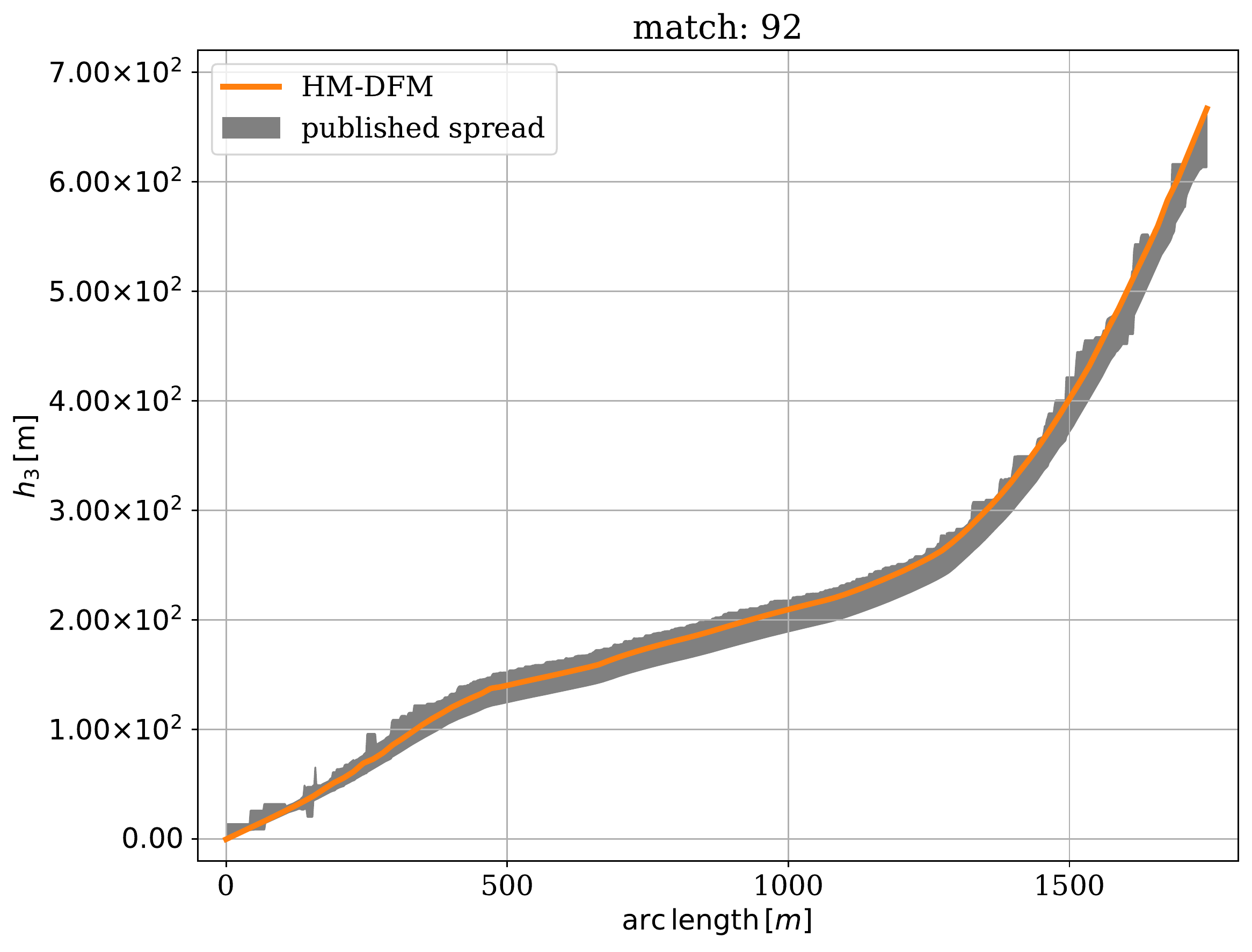}\\
    (a)
    &
    (b)
  \end{tabular}
    \caption{Benchmark 8: Hydraulic head across the domain.
    (a): Profile from outlet $\partial\Omega_{out,0}$
    towards the opposite corner.
    (b): Profile from outlet $\partial\Omega_{out,1}$
    towards the opposite corner $\partial\Omega_{in}$.
  }
    \label{fig:fieldC}
  \end{figure}

\section{Conclusion}
\label{sec:conclude}

  A novel hybrid-mixed method for single-phase flow in fractured porous media has been presented.
  Distinctive features of the scheme includes local mass conservation, symmetric positive definite linear system,
  and allowing the computational mesh to be completely non-conforming to the blocking fractures.
  
  Ample benchmark tests show the excellent performance of the proposed scheme, which is also highly competitive with existing work in the literature.
  Extension to the method to more complex fractured flow models and
  adaptation of the method to more general meshes consists of our on-going work.
 We will also investigate efficient preconditioning procedures for the associated linear system problem in the near future.

\bibliographystyle{ieeetr}

\end{document}